\newtheorem{theorem}{Theorem}[section]
\numberwithin{equation}{section}
\newtheorem{corollary}[theorem]{Corollary}
\newtheorem{definition}[theorem]{Definition}
\newtheorem{lemma}[theorem]{Lemma}
\newtheorem{remark}[theorem]{Remark}
\newenvironment{proof}[1][Proof]{\textbf{#1.}}{\ \rule{0.5em}{0.5em}}%
\begin{document}
\parindent 9mm
\title{A Class of Linear Boundary Systems
with Delays in State, Input and Boundary Output
\thanks{This work was supported by the Natural Science Foundation of China (grant nos. 11301412 and 11131006), Research Fund for the Doctoral Program of Higher Education of China (grant no. 20130201120053), Shaanxi Province Natural Science Foundation of China (grant no. 2014JQ1017), Project funded by China Postdoctoral Science Foundation (grant nos. 2014M550482 and 2015T81011), the Fundamental Research Funds for the Central Universities (grant no. 2012jdhz52). Part of this work was done during the first author was visiting at University of Wuppertal, Germany. The first author is grateful to Prof. Birgit Jacob for her warmful help both in his study and living.}
\thanks{2010 Mathematics Subject Classification. 34K30; 35F15; 47D06; 92D25.}}
\author{ Zhan-Dong Mei
\thanks{Corresponding
author, School of Mathematics and Statistics, Xi'an Jiaotong
University,
 Xi'an
710049, China; Email: zhdmei@mail.xjtu.edu.cn } \ \ \
 Ji-Gen Peng \thanks{School of Mathematics and Statistics, Xi'an Jiaotong
University,
 Xi'an
710049, China; Email: jgpeng@mail.xjtu.edu.cn}}

%\thanks{This work was supported by NCET and in part by the NSFC under the contact 10531030}

\date{}
\maketitle \thispagestyle{empty}
\begin{abstract}
%
% Abstract is written here.
%
In this paper, we consider a class of linear boundary systems with
delays in state, input and boundary output. We prove the
well-posedness and derive some spectral properties of linear system
with delayed boundary feedback under some regularity conditions.
Moreover, we show the regularity of linear boundary systems with
delays in state and boundary output. With the above results, the
regularity of linear boundary systems with delays in state, input
and boundary output is verified. As applications, we prove the
well-posedness and the asymptotic behavior of population systems
with bounded and unbounded birth processes
``$B_1(t)=\int_0^\infty\int_{-r}^0\beta_1(\sigma,a)u(t-\tau,a)d\sigma
da$" and ``$B_2(t)=\int_0^\infty\beta_2(a)u(t-\tau,a)da$", and the well-posedness of population systems with death caused by harvesting.

\vspace{0.5cm} %do not delete this line

%%%%%%%%%%%%%%%%%%%%%%%%%%%%%%%%%%%%%%%%%%%%%%%%%%%%%%%%%%%%%%%%%%%%%%%%%%%%%%
%
% Please write keywords (no more than five words)
%
\noindent {\bf Key words:} Delay; Linear boundary system; regular
linear systems; age dependent population equation.
% no more than five words.

\end{abstract}

%%%%%%%%%%%%%%%%%%%%%%%%%%%%%%%%%%%%%%%%%%%%%%%%%%%%%%%%%%%%%%%%%%%%%%%%%%%%%%%

\section{Introduction}
Let $X,\ U,\ V,\ Y$ be Banach spaces. Denote by $L(X,Y)$ all the bounded linear operators from $X$ to $Y$. Then $L(X,Y)$ is a Banach space. Replace $L(X,X)$ with $L(X)$ for brief. Denote by $I$ the 
unit operator in $X$. Let $R$ be the set of all the real numbers and $R^+=\{s\in R:s\geq 0\}$. Assume that $A$ is a linear operator in $X$. Let $\rho(A)$, $\sigma(A)$ and $\sigma_P(A)$
be the resolvent set, spectrum and point spectrum of $A$, respectively. Denote by $R(\lambda,A)=(\lambda-A)^{-1}$ the resolvent operator of $A$. If $A$ generates a $C_0$-semigroup, then we denote by $T_A$ the corresponding semigroup (for the definition of $C_0$-semigroup, we refer to \cite{Engel2000}).
For $p\geq1$, $L^p((0,b); X)$
denotes the space of $X$-valued Bochner integrable functions
$u:(0,b)\rightarrow X$ with the norm
$\|u\|_{L^p((0,b);X)}=\big(\int_{0}^b\|u(t)\|dt\big)^{\frac{1}{p}}$.
For $\mathfrak{J}=(0,b)$, or $\mathfrak{J}=[0,b]$, the Sobolev spaces $W^{n,p}(\mathfrak{J}; X)$ is
defined by (\cite[Appendix]{Brezis1973}):
\begin{equation*}
W^{n,p}(\mathfrak{J}; X)=\{u|\ \exists \varphi\in L^{p}(\mathfrak{J};
X):u(t)=\sum_{k=0}^{n-1}c_{k}\frac{t^{k}}{k!}+\frac{t^{n-1}}{(n-1)!}\ast
\varphi(t),\ t\in \mathfrak{J}\}.
\end{equation*}
Obviously $W^{0,p}(\mathfrak{I}; X)=L^{p}(\mathfrak{I}; X)$. Let $W_{lock}^{n,p}(\mathfrak{I}; X)=\{f\in W^{n,p}(\mathfrak{I}; X):\mathfrak{I}\subset \mathfrak{J}$ is any bounded interval $\}$.

Consider linear boundary systems with delays in state, input and
boundary output described by
\begin{eqnarray}\label{bdso}
\left\{
  \begin{array}{ll}
        \dot{w}(t)=A_m w(t)+Lw_t, & \hbox{ }t\geq 0,\\
    Pw(t)=v(t),& \hbox{ }t\geq 0,\\
    y(t)=Mw(t)+Kw_t, & \hbox{ }t\geq 0,
  \end{array}
\right.
\end{eqnarray}
and boundary system with delays in state, input and output
\begin{eqnarray}\label{bdsio}
\left\{
  \begin{array}{ll}
        \dot{w}(t)=A_m w(t)+Lw_t+Eq_t, & \hbox{ }t\geq 0,\\
    Pw(t)=v(t),& \hbox{ }t\geq 0,\\
    y(t)=Mw(t)+Kw_t+Hq_t, & \hbox{ }t\geq 0,
  \end{array}
\right.
\end{eqnarray}
where $w$ take values in $X$ and $w_t$ is the history
function defined by $w_t(\theta)=w(t+\theta),\ \theta\in[-r,0]$; $q$
take values in $V$ and $q_t$ is the history function
defined by $q_t(\theta)=q(t+\theta),\ \theta\in[-r,0]$; $v$ and $y$
take values in $V$ and $Y$, respectively;
 $A_m$ is a bounded linear operator from $D(A_m)$
to $X$, $D(A_m)$ is a Banach space densely and continuously embedded
into $X$; $L\in L(W^{1,p}([-r,0],X),X)$; $E\in
L(W^{1,p}([-r,0],V),X)$;
 $P\in L(D(A_m),U)$ is a surjective; $M\in L(D(A_m),Y)$; $K\in L(W^{1,p}([-r,0],X),Y)$; $H\in
L(W^{1,p}([-r,0],X),Y)$.

 In real problems, because of physics and
technology, controllers and sensors are usually placed on the
boundaries of the systems. Although they are easy to be realized in
Physics, boundary control and observation bring many difficulties to
the study of infinite-dimensional linear system because they make
the control operator and observation operator unbounded. In 1983, Ho
and Russell \cite{Ho1983} discussed a class of boundary control
systems, whose state is not unbounded enough to escape from the
energy space when the initial state is in the energy space; they
call such control operator to be admissible. In 1987, by using the
Kalman's axiomatization method, Salamon \cite{Salamon1987}
established the theory of well-posed linear system whose state and
output are continuously depended on the initial state and input.
Later, Weiss \cite{Weiss1989a,Weiss1989b} simplified Salamon's
theory and call the control and observation operator to be
``admissible". In \cite{Weiss1989c}, Weiss defined and developed the
notion of regular linear systems, a subclass of well-posed linear
system. Well-posed and regular linear systems in the sense of
Salamon-Weiss is very important because many properties of them are
similar to that of finite-dimensional linear system; they became the
maximal theory frame of infinite-dimensional linear system in the
abstract sense over the past 30 years. There emerged many works on
the theory of admissibility and regular linear systems. The
well-posedness and/or regularities of many physical systems such as
wave systems, Schr$\ddot{o}$dinger equation, beam and Naghdi system
\cite{Chai2010,Guo2005a,Guo2005,Guo2012}, have been proved.

Delays are usually inevitable to appear in state, input and/or
output. The existence of delays produces many difficulties to
analyze the well-posedness and regularity of systems because it even
makes finite-dimensional system infinite-dimensional. The delayed
freedom systems (without input) have been studied for many years.
Hale \cite{Hale1971} and Webb \cite{Webb1974} were among the first
who applied semigroup methods to the study of such equations; but
the state spaces are of finite dimension. For specifical
infinite-dimensional systems, such as wave and beam equations, many
authors convert the delay equations with $Lw_t=kw(t-r)$ to undelayed
equations by introduce a new variable $z(t,\tau):=w(t-\tau r)$. In
such a way, there hold $\frac{\partial z(t,\tau)}{\partial
t}=-\frac{1}{r}\frac{\partial z(t,\tau)}{\partial \tau}$ and $Lw_t=kz(t,1).$ Then
the delayed part disappears by increasing a new equation, see
\cite{Ammari2010,Guo2010,Pignotti2012,Shang2012}. For the systems with distribute delays $Lw_t=\int_{-r}^0d\mu(\sigma)w(t+\sigma),$ by introducing variable $z(t,\tau,s):=w(t-\tau s)$, $s\in[0,r]$, one can obtain $\frac{\partial z(t,\tau,s)}{\partial
t}=-\frac{1}{s}\frac{\partial z(t,\tau,s)}{\partial \tau}$ and $Lw_t=\int_{-r}^0d\mu(\sigma)z(t,1,-\sigma)$.
Then the delayed systems are also transferred to undelayed systems \cite{Nicaise2008}. The well-posedness of the systems were studied by using Hilbert space method and the corresponding system operators are dissipative. In order to study general delayed linear system with infinite dimensional state spaces,
B$\acute{a}$tkai et. \cite{Batkai2001} introduced the perturbation theory of semigroups. Concretely, they transferred the
delayed freedom system to a larger undelayed system and use perturbation theory to prove that the system operator generates a $C_0$-semigroup and use spectral theorem to study the asymptotic behavior. The theory of well-posed linear system can also be used the study a class of general delayed linear systems \cite{Salamon1987}. In the series of their papers, Hadd et al. studied the mild expressions and regularities of general delayed linear system \cite{Hadd2005,Hadd2005b,Hadd2006,Hadd2006b}.
Observe that the controller of the systems Hadd et al. studied are placed
on the interior. However, like undelayed system, in the real problem the controller are usually
placed on the boundary. Therefore, it is urge to develop a theory to
solve the well-posedness and regularity of general delayed linear
system with boundary control and boundary observation.

Population dynamical systems with delay birth process can be
described as system $(\ref{bdso})$ with $L=0,\ M=0$ and $v(t)=y(t)$. In \cite{Piazzera2004}, Piazzera considered the situation that the
birth process, namely, the boundary feedback operator $K$, is a
bounded linear operator with respect to the history function.
Concretely, he proved the well-posedness of such system by
Desch-Schappacher perturbation theorem \cite{Desch1989} and
discussed the asymptotic behavior through spectral theory and
positive semigroup theory. Simultaneously, Piazzera
 pointed out that population dynamical system
with unbounded birth process
``$B(t)=\int_0^\infty\beta(a)u(t-\tau,a)da, t\geq 0$" is an {\it
open problem}, and only particular results, e.g. for neutral
differential equations \cite{Nagel2003} or analytic semigroups
\cite{Greiner1991}, are known while a general perturbation result is
still missing. The main difficulty of population dynamical systems
with unbounded birth process lies in that the unboundedness makes
Desch-Schappacher perturbation theorem invalid. In the recent paper
\cite{Mei2015}, we solved such {\it open problem} by using feedback theory
of the regular linear system developed by Weiss \cite{Weiss1994b}.
In \cite{Mei2015a}, we proved the well-posedness of system with $L$ being unbounded and $K$ being bounded by our admissible invariable theorem developed in \cite{Mei2010}.
To the best of the authors' knowledge,
there has no work that proved the well-posedness of the system with
$K$ and $L$ being unbounded.
The asymptotic behaviors of population systems with $L\neq 0$ have not been studied yet. Furthermore, the well-posedness of population systems with death caused by harvesting ($E\neq0$) also have not been solved. Motivated by this, we will try to use
the theory of regular linear system to deal with such problem.
However, we observe that the unboundedness of $L$ will bring us
essential difficulties. In order to settle such problem, we plan to
use the perturbation theory developed by our recent paper
\cite{Mei2010a}. Moreover, some other theorems will be proved, which
are useful to deal with our problem.

The rest of this paper is arranged as follows. Section 2 will
recall the theory of regular linear system, which is the main
tool in our paper. By means of the theory of regular linear system,
we get in Section 3 the well-posedness and spectrum relations of
linear system with delayed boundary feedback under some regularity
conditions. In Section 4, the regularity of linear boundary systems
with delays in state and boundary output is proved. With the results obtained in Sec. 4, we derive the regularity of linear boundary systems with delays in state, input and boundary output.
Moreover, we prove such bounded feedback systems are abstract linear control systems. As applications, we firstly study the well-posedness and asymptotic behavior of population dynamical system with death caused by pregnancy and with delayed birth process, secondly prove
the well-posedness of population systems with death caused by harvesting.

\section{Preliminaries on Regular Linear Systems}

In this section, we shall recall the theory of well-posed linear
system in the sense of Salamon-Weiss \cite{Salamon1987} and regular
linear system in the sense of Weiss \cite{Weiss1994a}. Throughout
this section, we assume that $X$, $U$ and $Y$ are Banach spaces,
$1<p<\infty$. Let $T=\{T(t)\}_{t\geq 0}$ be a $C_0$-semigroup and
$A$ its generator on $X$. Denote by $X_{-1}$ the extrapolation space
corresponding to $X$, which is the completion of $X$ under the norm
$\|R(\lambda_0,A)\cdot\|$ with $R(\lambda_0,A)$ the resolvent of $A$
at $\lambda_0$; $\{T_{-1}(t)\}_{t\geq0}$ is the extrapolation
semigroup of $\{T(t)\}_{t\geq0}$ with generator $A_{-1}$, which is
the continuous extension of $\{T(t)\}_{t\geq0}$ on $X_{-1}$. For
more details of extrapolation space and extrapolation semigroup, we
refer to \cite{Engel2000}.

The pair $(T,\Phi)$ is called {\it abstract linear control system},
if $\Phi=\{\Phi(t)\}_{t\geq 0}$ is a family of bounded linear
operators from $L^p(R^+,U)$ to $X$ such that
\begin{eqnarray*}
% \nonumber to remove numbering (before each equation)
 \Phi(t+\tau)u=T(t)\Phi(\tau)u+\Phi(t)u(\tau+\cdot),\ u\in
L^p(R^+,U).
\end{eqnarray*}
It follows by \cite{Weiss1989a} that there exists a unique operator $B\in
L(U,X_{-1})$, called admissible control operator, such that
$$\Phi(t)u=\int_0^tT_{-1}(t-s)Bu(s)ds.$$
In this case, we say $(T,\Phi)$ is generated by $(A,B)$ and denote
$\Phi=\Phi_{A,B}$.

The pair $(T,\Psi)$ is called {\it abstract linear observation
system}, if $\Psi=\{\Psi(t)\}_{t\geq 0}$ is a family of bounded
linear operators from $X$ to $\L^p(R^+,Y)$ such that
\begin{eqnarray}
% \nonumber to remove numbering (before each equation)
  (\Psi(t+\tau) x)(\cdot)=(\Psi(t)T(\tau)x)(\cdot-\tau)
\mbox{ on } [\tau,t+\tau],\ x\in X.
\end{eqnarray}
By \cite{Weiss1989b}, it follows that there exists a unique operator $C$, called
admissible observation operator, such that
$$(\Psi(t)x)(\sigma)=CT(\sigma)x,\
\sigma\in [0,t].$$ In this case, we say $(T,\Psi)$ is generated by
$(A,C)$ and denote $\Psi=\Psi_{A,C}$. By \cite{Weiss1989b}, there
exists a unique operator $\Psi(\infty):\hbox{ }X\rightarrow
L^2_{loc}(R^+,Y)$ such that
\begin{eqnarray*}
% \nonumber to remove numbering (before each equation)
  \Psi(\tau)=P_\tau\Psi(\infty),\ \tau\geq 0.
 \end{eqnarray*}

The pair $(T,\Phi,\Psi,F)$ is called {\it well-posed linear system},
if $(T,\Phi)$ is abstract linear control system, $(T,\Psi)$ is abstract linear observation system, and $F=\{F(t)\}_{t\geq 0}$ is a family of bounded linear operators
from $L^p(R^+,U)$ to $L^p(R^+,Y)$ such that
\begin{eqnarray}\label{F}
% \nonumber to remove numbering (before each equation)
  (F(t+\tau)u)(\cdot)=(\Psi(t)\Phi(\tau)u+F(t)u(\tau+\cdot))(\cdot-\tau)
\mbox{ on } [\tau,t+\tau],\ u\in L^p(R^+,U).
\end{eqnarray}
It follows from \cite{Weiss1989b} that there exists a unique
operator $F(\infty):\hbox{ }L^p_{loc}(R^+,U)\rightarrow
L^p_{loc}(R^+,Y)$ such that
\begin{eqnarray*}
  F(\tau)(t)=F(\infty)(t),\ 0\leq t\leq \tau.
\end{eqnarray*}

The well-posed linear system $\Sigma$ is called to be {\it regular},
if the limit
\begin{equation}\label{rD}
    \lim\limits_{t\rightarrow 0}\frac{1}{t}\int_0^t(F_\infty
u_0)(s)ds
\end{equation}
exist, where $u_0(t)=z,z\in U,t\geq 0$. The operator $D\in L(U,Y)$
defined by $Dz=\lim\limits_{t\rightarrow
0}\frac{1}{t}\int_0^t(F_\infty u_0)(s)ds$ is the feedthrough
operator. In this case, we also say that $\Sigma=(T,\Phi,\Psi,F)$ is
generated by $(A,B,C,D)$, and we denote $F=F_{A,B,C,D}$. Moreover,
we denote $F_{A,B,C}$ by $F_{A,B,C,0}$ for brief.

In \cite{Weiss1994a}, Weiss introduced an extension of $C$,
called {\it $\Lambda$-extension} with respect to $A$, which is
defined by
\begin{equation}
C^A_\Lambda x=\lim_{\lambda\rightarrow \infty}C\lambda R(\lambda,A)x,\ D(C^A_\Lambda)=\{x\in X:\mbox{this above limit exists
in}\, Y\}
\end{equation}
It follows from \cite[Theorem 4.5 and Proposition
4.7]{Weiss1989b} that for any $x\in X$, $y(t)=C_\Lambda T(t)x$ a.e.
in $t\geq 0$ whenever $C$ is admissible for $A$.

The transfer function $G$ of regular linear system generated by
$(A,B,C,D)$ is given by
\begin{eqnarray}
% \nonumber to remove numbering (before each equation)
  G(\lambda)=C^A_\Lambda R(\lambda,A_{-1})B+D, \hbox{
  }Re(\lambda)>w_0(T),
\end{eqnarray}
where $w_0(T)$ is the growth bound of the semigroup $T$, and we
denote $G=G_{A,B,C}$.

In order to state the following theorem, we define
\begin{align*}
% \nonumber to remove numbering (before each equation)
  D^p(M)
  =\{f(\cdot)\in L^p_{loc}(R^+,X):
  f\in D(M) \hbox{ }for \hbox{ a.e. }t\geq 0, and \hbox{ }
  Mf(\cdot)\in L^p_{loc}(R^+,X)\}.
\end{align*}

\begin{theorem}\cite{Weiss1989c}\label{gm}
Let $\Sigma$ be a regular linear system with generating operator
$A$, $B$, $C$ and $D$ on $(X,U,Y)$. Then, for given $(x_0,u)\in
X\times L^p(R^+,U)$, the state trajectory $x(\cdot)$ of $\Sigma$,
given by $x(t):=T(t)x_0+\Phi_{A,B}u$, $t\geq 0$, is a.e.
differential in $X_{-1}$ and
\begin{eqnarray}
% \nonumber to remove numbering (before each equation)
  \dot{x}(t)=A_{-1}x(t)+Bu(t), \hbox{ }x(0)=x_0 \hbox{ } for \hbox{
  a.e. } t\geq 0.
\end{eqnarray}
Furthermore, $x(t)\in D(C^A_\Lambda)$ for a.e. $t\geq 0$ and the
output function $y=\Psi_{A,C}(\infty)x+F_{A,B,C,D}(\infty)u$ of
$\Sigma$ is given by
\begin{eqnarray}\label{otp}
% \nonumber to remove numbering (before each equation)
  y(t)=C_\Lambda^Ax(t)+Du(t) \hbox{ } for \hbox{ } a.e. \hbox{ }t\geq
  0.
\end{eqnarray}
In particular, $\Phi_{A,B}(\cdot)u\in D^p(C^A_\Lambda)$ and
the extended input-output map $F(\infty)$ is given by
\begin{eqnarray}\label{Fu}
% \nonumber to remove numbering (before each equation)
  (F(\infty)u)(t)=C^A_\Lambda\int_0^tT_{-1}(t-s)Bu(s)ds+Du(t) \hbox{ }
  for \hbox{ }a.e. \hbox{ }t\geq0.
\end{eqnarray}
\end{theorem}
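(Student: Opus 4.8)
The statement to be proved is Theorem~\ref{gm}, which is attributed to Weiss \cite{Weiss1994a,Weiss1989c}; accordingly the plan is essentially to reconstruct Weiss's argument, organising it around the three auxiliary systems $(T,\Phi_{A,B})$, $(T,\Psi_{A,C})$ and $F_{A,B,C,D}$ and the objects $A_{-1}$, $C^A_\Lambda$, $G_{A,B,C}$ already introduced above. The first step is the differential equation for the state. Since $\Phi_{A,B}(t)u=\int_0^t T_{-1}(t-s)Bu(s)\,ds$ and $B\in L(U,X_{-1})$, the function $x(\cdot)=T(\cdot)x_0+\Phi_{A,B}(\cdot)u$ is exactly the mild solution in $X_{-1}$ of $\dot x=A_{-1}x+Bu$; the standard fact that a mild solution of an inhomogeneous Cauchy problem with an $L^p$ forcing term is a.e.\ strongly differentiable in the larger space, with the equation holding a.e., gives the first displayed identity. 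For this one uses that $t\mapsto\int_0^tT_{-1}(t-s)Bu(s)\,ds$ lies in $W^{1,p}_{loc}(R^+,X_{-1})$ with derivative $A_{-1}\Phi_{A,B}(t)u+Bu(t)$, a Fubini/convolution computation on the resolvent level.

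The heart of the theorem is the regularity statement: that $x(t)\in D(C^A_\Lambda)$ for a.e.\ $t$ and $y(t)=C^A_\Lambda x(t)+Du(t)$ a.e. Here I would proceed in the order Weiss does. First treat the case $u=0$: by \cite[Theorem 4.5 and Proposition 4.7]{Weiss1989b}, already quoted in the excerpt, admissibility of $C$ for $A$ gives $(\Psi_{A,C}(\infty)x_0)(t)=C^A_\Lambda T(t)x_0$ a.e., so $T(t)x_0\in D(C^A_\Lambda)$ a.e.\ and the output formula holds on the observation part. Next handle the control part: one must show $\Phi_{A,B}(\cdot)u\in D^p(C^A_\Lambda)$ and identify $C^A_\Lambda\Phi_{A,B}(t)u$ with the extended input--output map minus the feedthrough term, i.e.\ prove \eqref{Fu}. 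The mechanism is to take Laplace transforms: for $\mathrm{Re}\,\lambda$ large, $\widehat{\Phi_{A,B}u}(\lambda)=R(\lambda,A_{-1})B\widehat u(\lambda)$ and $\widehat{F(\infty)u}(\lambda)=G(\lambda)\widehat u(\lambda)=(C^A_\Lambda R(\lambda,A_{-1})B+D)\widehat u(\lambda)$, and regularity of $\Sigma$ (existence of the limit \eqref{rD} defining $D$) is precisely what lets one pass the $\lambda R(\lambda,A)$ limit through the convolution and conclude that $\lambda R(\lambda,A)\Phi_{A,B}(t)u$ converges a.e.\ to $(F(\infty)u)(t)-Du(t)$. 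Combining the $u=0$ part and this part by linearity of $C^A_\Lambda$ on $D(C^A_\Lambda)$ yields $x(t)\in D(C^A_\Lambda)$ a.e.\ and the formula \eqref{otp}; the decomposition $y=\Psi_{A,C}(\infty)x_0+F_{A,B,C,D}(\infty)u$ of the full output then gives the claim.

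The main obstacle is the a.e.\ pointwise convergence underlying $\Phi_{A,B}(\cdot)u\in D^p(C^A_\Lambda)$ and formula \eqref{Fu}: regularity of $\Sigma$ is, by definition \eqref{rD}, only a Cesàro-type limit for the constant input, and upgrading this to a genuine a.e.\ limit of $\lambda R(\lambda,A)\Phi_{A,B}(t)u$ for arbitrary $u\in L^p(R^+,U)$ requires the Abelian/Tauberian machinery from \cite{Weiss1989c} together with a density argument in $L^p$ (passing from step functions, where the computation is explicit via shifts of the constant-input case, to general $u$, using the uniform boundedness of the operators $\lambda R(\lambda,A_{-1})B$ in an appropriate sense and the boundedness of $F(\infty)$ on $L^p_{loc}$). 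One must also be careful that the exceptional null set can be chosen independently of the approximating sequence. Everything else --- the $X_{-1}$-differentiability, the Laplace-transform identities, and the additivity of $C^A_\Lambda$ --- is routine once this convergence lemma is in hand, so I would state that lemma separately and spend the bulk of the proof on it.
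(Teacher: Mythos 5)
The paper does not prove this statement: Theorem \ref{gm} is quoted from Weiss \cite{Weiss1989c} purely as a background tool, so there is no internal proof to compare yours against. Your outline does track the structure of Weiss's original argument — the $X_{-1}$-differentiability of the state, the splitting of the output into the free part $\Psi_{A,C}(\infty)x_0$ (covered by the admissibility results of \cite{Weiss1989b}) and the forced part, and the correct identification of the crux as the a.e.\ statements $\Phi_{A,B}(t)u\in D(C^A_\Lambda)$ and \eqref{Fu}. The first part is essentially fine; the cleanest route is the integrated identity $x(t)-x_0=A_{-1}\int_0^t x(s)\,ds+\int_0^t Bu(s)\,ds$, whose right-hand side is a.e.\ differentiable in $X_{-1}$ because $x(\cdot)$ is continuous in $X$ and $Bu\in L^p_{loc}(R^+,X_{-1})$.

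The genuine gap sits exactly at the step you yourself flag as the main obstacle, and it is not closed. Deferring the a.e.\ convergence lemma to ``the Abelian/Tauberian machinery from \cite{Weiss1989c}'' is circular, since that reference is the source of the theorem being proved; and the mechanism you actually sketch — matching Laplace transforms, $\widehat{F(\infty)u}(\lambda)=\big(C^A_\Lambda R(\lambda,A_{-1})B+D\big)\widehat u(\lambda)$ — cannot deliver the conclusion. Equality of Laplace transforms only identifies $F(\infty)u$ as an element of $L^p_{loc}(R^+,Y)$ up to null sets; it says nothing about the pointwise claim that $\lim_{\lambda\to\infty}C\lambda R(\lambda,A)\Phi_{A,B}(t)u$ exists for a.e.\ $t$, which is the substantive content of $\Phi_{A,B}(\cdot)u\in D^p(C^A_\Lambda)$. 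What is actually required is: (i) for a constant input $u_0\equiv v$, combine the definition \eqref{rD} with the Lebesgue differentiation theorem and time-invariance to show that $\Phi_{A,B}(t)u_0$ lies a.e.\ in the domain of the Lebesgue extension of $C$, with value $(F(\infty)u_0)(t)-Dv$, and invoke the inclusion of the Lebesgue extension in the $\Lambda$-extension; (ii) extend to step inputs via the composition property \eqref{F}; (iii) pass to general $u\in L^p$ by density, using the well-posedness bounds on $\Phi$ and $F$ together with a closedness-type property of $C^A_\Lambda$ along such approximating sequences to produce a single exceptional null set. Without (i)--(iii) carried out, the proposal is an accurate roadmap of Weiss's proof rather than a proof.
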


\begin{definition}\cite{Staffans2005,Weiss1994b}
An operator $\Gamma\in L(Y,U)$ is called an admissible feedback operator
for $\Sigma=(T,\Phi,\Psi,F)$ if $I-F(t)\Gamma$ is invertible for any $t\geq 0$ (hence any $t\geq 0$).
\end{definition}

\begin{theorem}\label{relation}\cite{Weiss1994b}
Let $(A,B,C,D)$ be the generator of regular linear system
$\Sigma=(T,\Phi,\Psi,F)$ on $(X,U,Y)$ with admissible feedback
operator $\Gamma\in L(Y,U)$. Then $(I-D\Gamma)_{left}^{-1}$ exists and the feedback system $\Sigma^\Gamma$ is
a well-posed linear system generated by
$(A^\Gamma,B^\Gamma,C^\Gamma)$:
\begin{eqnarray*}
% \nonumber to remove numbering (before each equation)
  &A^\Gamma =(A_{-1}+B\Gamma(I-D\Gamma)_{left}^{-1} C^A_\Lambda)|_X,\;\\
 &D(A^\Gamma): = \{z\in D(C^A_\Lambda):(A_{-1}+B\Gamma (I-D\Gamma)_{left}^{-1}C^A_\Lambda)z\in
 X\}
\end{eqnarray*}
and $C^\Gamma=(I-D\Gamma)_{left}^{-1}C^A_\Lambda$ restricted to
$D(A^\Gamma)$, where $J^{A,A^\Gamma}$ is defined by
$J^{A,A^\Gamma}x=\lim_{\lambda\rightarrow
\infty}(\lambda-A_{-1})^{-1}x$ (in $X_{-1}^{A^\Gamma}$) with
$D(J^{A,A^\Gamma})=\{x\in X^A_{-1}:$ the limit
$\lim_{\lambda\rightarrow \infty}(\lambda-A_{-1})^{-1}x$ exists
$\}$. If in addition, $I-D\Gamma$ is invertible, we have that
 $B^\Gamma=J^{A,A^\Gamma}B$ and $D(C^A_\Lambda)=D((C^\Gamma)^{A^\Gamma}_\Lambda)$.
\end{theorem}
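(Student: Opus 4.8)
The plan is to follow Weiss's frequency-domain construction of the closed loop. First I would build the closed-loop data directly from the feedback law $u=v+\Gamma y$. Substituting $y=F(t)u$ gives $(I-F(t)\Gamma)y=F(t)v$, so the admissibility of $\Gamma$ lets one define the bounded operators
\[
F^\Gamma(t):=(I-F(t)\Gamma)^{-1}F(t),\qquad \Phi^\Gamma(t):=\Phi(t)\bigl(I+\Gamma F^\Gamma(t)\bigr),
\]
and, with the analogous feedthrough-type correction on the output side, $\Psi^\Gamma$; the underlying semigroup $T$ is unchanged. Boundedness of all three families is immediate from the assumed boundedness of $(I-F(t)\Gamma)^{-1}$. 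I would then verify that $(T,\Phi^\Gamma,\Psi^\Gamma,F^\Gamma)$ satisfies the composition identities defining a well-posed linear system — the cocycle identity for $\Phi^\Gamma$, the shift identity for $\Psi^\Gamma$, and (\ref{F}) for $F^\Gamma$ — which is a routine though lengthy computation using the corresponding open-loop identities together with the causality of $F(t)$.

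To identify the generating operators I would pass to the Laplace-transform domain and use Theorem \ref{gm}. There the feedback law reads $\hat u=\hat v+\Gamma\hat y$, $\hat y=G(\lambda)\hat u$, with $G(\lambda)=C^A_\Lambda R(\lambda,A_{-1})B+D$, so $\hat y=(I-G(\lambda)\Gamma)^{-1}G(\lambda)\hat v$ once $I-G(\lambda)\Gamma$ is shown invertible for $\operatorname{Re}\lambda$ large. A preliminary step is to prove the static feedthrough term is already well-behaved: since $D=\lim_{t\to0^+}\frac1t\int_0^t(F_\infty u_0)(s)\,ds$ and $I-F(t)\Gamma$ is boundedly invertible for all $t$, an averaging argument applied to constant inputs shows $(I-D\Gamma)_{left}^{-1}$ exists — this is precisely where the admissibility hypothesis is used directly. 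Then one solves the resolvent equation $(\lambda-A^\Gamma)z=w$ by the same elimination as in the time domain: writing $z=R(\lambda,A)w+R(\lambda,A_{-1})B\,\Gamma(I-D\Gamma)_{left}^{-1}C^A_\Lambda z$ and applying $C^A_\Lambda$ produces a closed equation for $C^A_\Lambda z$ whose coefficient $I-\bigl(C^A_\Lambda R(\lambda,A_{-1})B\bigr)\Gamma(I-D\Gamma)_{left}^{-1}$ is invertible for $\operatorname{Re}\lambda$ large because $C^A_\Lambda R(\lambda,A_{-1})B=G(\lambda)-D\to0$. Solving and substituting back yields $\lambda\in\rho(A^\Gamma)$ together with an explicit formula for $R(\lambda,A^\Gamma)$, from which the stated descriptions of $A^\Gamma$ and $D(A^\Gamma)$ follow by the usual characterization of a generator via its resolvent, and $C^\Gamma=(I-D\Gamma)_{left}^{-1}C^A_\Lambda|_{D(A^\Gamma)}$ drops out of $\hat y=C^\Gamma R(\lambda,A^\Gamma)\hat v$ on matching with the closed-loop transfer function. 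The identity $(I-\Gamma D)\Gamma=\Gamma(I-D\Gamma)$ is used to rewrite the control term; when $I-D\Gamma$ is genuinely invertible the left inverses become ordinary inverses, which gives $B^\Gamma=J^{A,A^\Gamma}B$, and comparing the extrapolation topologies induced by $A_{-1}$ and $A^\Gamma_{-1}$ — they differ by the bounded perturbation $B\Gamma(I-D\Gamma)^{-1}C^A_\Lambda:D(C^A_\Lambda)\to X_{-1}$ — yields $D(C^A_\Lambda)=D((C^\Gamma)^{A^\Gamma}_\Lambda)$.

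The main obstacle is the Banach-space subtlety concentrated in the left inverse $(I-D\Gamma)_{left}^{-1}$: unlike in a Hilbert space one cannot simply invert $I-D\Gamma$, so one must establish both its left-invertibility and the admissibility of the resulting $C^\Gamma$ for $A^\Gamma$, i.e. that $(C^\Gamma)^{A^\Gamma}_\Lambda$ exists a.e.\ along closed-loop trajectories. For the latter I would apply the a.e.\ existence of $C^A_\Lambda x(\cdot)$ from Theorem \ref{gm} to the closed-loop state $x^\Gamma(\cdot)$ and push the a.e.\ existence and the local $L^p$-integrability through the feedback loop using the boundedness of $(I-F(t)\Gamma)^{-1}$. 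A secondary technical point is the extrapolation-space claim in the last sentence, which reduces to checking that the perturbation $B\Gamma(I-D\Gamma)^{-1}C^A_\Lambda$ maps $D(C^A_\Lambda)$ into $X_{-1}$ boundedly and hence does not enlarge the relevant domain.
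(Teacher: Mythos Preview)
The paper does not give a proof of this theorem at all: Theorem~\ref{relation} is simply quoted from \cite{Weiss1994b} as a known result and used as a tool throughout the paper. So there is no ``paper's own proof'' to compare against; your proposal is effectively a sketch of Weiss's original argument rather than a reconstruction of anything in this paper.

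That said, your sketch contains a substantive error. You write that ``the underlying semigroup $T$ is unchanged.'' This is false: under output feedback the closed-loop semigroup $T^\Gamma$ is generated by $A^\Gamma=(A_{-1}+B\Gamma(I-D\Gamma)_{left}^{-1}C^A_\Lambda)|_X$, which is generically different from $A$. Indeed, the whole content of the theorem is the identification of this \emph{new} generator. Concretely, the closed-loop state satisfies $\dot x=A_{-1}x+Bu$ with $u=\Gamma y+v$ and $y=C^A_\Lambda x+Du$, so eliminating $u$ gives $\dot x=(A_{-1}+B\Gamma(I-D\Gamma)_{left}^{-1}C^A_\Lambda)x+B(\cdots)v$; the free dynamics ($v=0$) are governed by $T^\Gamma$, not $T$. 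Your subsequent resolvent computation is in fact aimed at $R(\lambda,A^\Gamma)$, which is inconsistent with the earlier claim that the semigroup is still $T$. The construction of $\Phi^\Gamma$, $\Psi^\Gamma$, $F^\Gamma$ must therefore be built on $T^\Gamma$, and the verification of the well-posedness axioms uses the \emph{closed-loop} cocycle, not the open-loop one. Apart from this, the frequency-domain identification you outline (invertibility of $I-G(\lambda)\Gamma$ for large $\operatorname{Re}\lambda$, solving for $C^A_\Lambda z$, matching transfer functions) is the standard route in \cite{Weiss1994b}.
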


The following theorem is an important tool in this paper, which was
proved in our paper \cite{Mei2010a}.

\begin{lemma}\cite{Mei2010a}\label{observe}
Assume that $(A,B,C)$ and $(A,B,P)$ generate regular linear systems
on $(X,U,Y)$ and $(X,U,X)$, respectively. Then $(A+P,J^{A,A+P}B,C)$
generates a regular linear system. Moreover, there hold
\begin{align}\label{Frelation1}
\Phi_{A+P,J^{A,A+P}B} =\Phi_{A+P,I}F_{A,B,P} +\Phi_{A,B}.
\end{align}
and
\begin{align}\label{Frelation}
F_{A+P,J^{A,A+P}B,C} =F_{A+P,I,C}F_{A,B,P} +F_{A,B,C}.
\end{align}

\end{lemma}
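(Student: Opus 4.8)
The plan is to realise $(A+P,J^{A,A+P}B,C)$ as the $(U\to Y)$ sub-system of the closed loop obtained from an augmented open-loop regular system under an admissible feedback, use Theorem \ref{relation} for generation, admissibility and well-posedness, and then read off (\ref{Frelation1})--(\ref{Frelation}) from the feedback-connection formulas. Concretely, I would form $\Sigma_0=(A,\mathcal B,\mathcal C,\mathcal D)$ on $(X,\,X\times U,\,X\times Y)$ with control operator $\mathcal B(\xi,u)=\xi+Bu$ (the first channel the identity $I\in L(X,X)\subset L(X,X^A_{-1})$, the second channel $B$) and observation operator $\mathcal C z=(Pz,Cz)$. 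Since $I$ is bounded, $(A,I,P)$ and $(A,I,C)$ are regular with zero feedthrough, so with the hypotheses on $(A,B,P)$ and $(A,B,C)$ all four combinations are regular and $\Sigma_0$ is a regular linear system whose feedthrough $\mathcal D$ has vanishing $I$-column. Take $\Gamma\in L(X\times Y,\,X\times U)$, $\Gamma(\eta,\zeta)=(\eta,0)$, which routes the $X$-valued output back into the $I$-channel and discards the $Y$-valued output; then $\mathcal D\Gamma=0$, so $I-\mathcal D\Gamma=I$, and -- granting that $A+P$ is a genuine generator, equivalently that $I-F_{A,I,P}(t)$ is invertible for all $t$ (which is what the symbol $A+P$ presupposes here) -- $\Gamma$ is an admissible feedback operator for $\Sigma_0$, because the input-output operator $\mathcal F(t)$ of $\Sigma_0$ makes $I-\mathcal F(t)\Gamma$ block-lower-triangular with diagonal blocks $I-F_{A,I,P}(t)$ and $I$.

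Next I would invoke Theorem \ref{relation}. As $I-\mathcal D\Gamma=I$ is invertible, $\Sigma_0^\Gamma$ is a well-posed linear system generated by $(A^\Gamma,\mathcal B^\Gamma,\mathcal C^\Gamma)$; since $\mathcal B\Gamma\,\mathcal C^A_\Lambda z=\mathcal B(P^A_\Lambda z,0)=P^A_\Lambda z$, this gives $A^\Gamma=(A_{-1}+P^A_\Lambda)|_X=A+P$, and moreover $\mathcal B^\Gamma=J^{A,A+P}\mathcal B=(I,\,J^{A,A+P}B)$ and $\mathcal C^\Gamma=\mathcal C^A_\Lambda|_{D(A+P)}$ with $D((\mathcal C^\Gamma)^{A+P}_\Lambda)=D(\mathcal C^A_\Lambda)$. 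In particular $J^{A,A+P}B$ is an admissible control operator for $A+P$, $C$ (the second component of $\mathcal C^\Gamma$) is admissible for $A+P$, and restricting $\Sigma_0^\Gamma$ to the input channel $U$ and the output channel $Y$ shows that $(A+P,J^{A,A+P}B,C)$ generates a well-posed linear system $\widehat\Sigma$.

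Now I would close the loop at the signal level. Writing the external $U$-input as $u$ and the internal $X$-output as $\eta$, the relations defining $\Sigma_0$ at zero initial state give $\eta=F_{A,I,P}\eta+F_{A,B,P}u$ and $y=F_{A,I,C}\eta+F_{A,B,C}u$; solving the first, $\eta=(I-F_{A,I,P})^{-1}F_{A,B,P}u$, hence
\begin{equation*}
y=F_{A,I,C}(I-F_{A,I,P})^{-1}F_{A,B,P}u+F_{A,B,C}u=F_{A+P,I,C}F_{A,B,P}u+F_{A,B,C}u,
\end{equation*}
the last step because the same feedback computation applied to the sub-system $(A,I,(P,C))$ identifies $F_{A,I,C}(I-F_{A,I,P})^{-1}$ with the input-output map of $(A+P,I,C)$. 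Since $y$ is the output of $\widehat\Sigma$, this is (\ref{Frelation}); running the analogous computation for the state map -- where the sub-system $(A,I,P)$ identifies $\Phi_{A,I}(I-F_{A,I,P})^{-1}$ with $\Phi_{A+P,I}$, or, equivalently, using Theorem \ref{gm} to write $P^A_\Lambda\Phi_{A,B}(\cdot)u=F_{A,B,P}u$ in $L^p_{loc}$ and the variation-of-constants relation between the semigroups of $A$ and $A+P$ -- gives (\ref{Frelation1}). Regularity of $\widehat\Sigma$ then follows from (\ref{Frelation}): for a constant input $u_0\equiv z$ the functions $F_{A,B,P}u_0$, $F_{A,B,C}u_0$ have the Cesàro limit at $0$ prescribed by the feedthroughs of $(A,B,P)$, $(A,B,C)$, and composition with the input-output map $F_{A+P,I,C}$ of a regular system preserves the existence of such a limit, so $\lim_{t\to0^+}\tfrac1t\int_0^t(\widehat F(\infty)u_0)(s)\,ds$ exists.

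The step I expect to be the main obstacle is making this set-up rigorous so that Theorem \ref{relation} genuinely applies and the closed-loop objects are the asserted ones: that $\Gamma$ is admissible (equivalently $A+P$ is a generator, via invertibility of $I-F_{A,I,P}(t)$); that the generator formula of Theorem \ref{relation} collapses to exactly $A+P$ and $\mathcal B^\Gamma$ to exactly $(I,J^{A,A+P}B)$ once the $\Lambda$-extensions and the two distinct extrapolation spaces $X^A_{-1}$, $X^{A+P}_{-1}$ are tracked; and (for the Duhamel route to (\ref{Frelation1})) that $\Phi_{A,B}(\cdot)u\in D^p(P^A_\Lambda)$ with $P^A_\Lambda\Phi_{A,B}(\cdot)u=F_{A,B,P}u$, which is precisely formula (\ref{Fu}) for $(A,B,P)$ and hence available from its regularity. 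Once this bookkeeping is settled, (\ref{Frelation1}), (\ref{Frelation}) and the regularity of $\widehat\Sigma$ are routine consequences of Weiss's feedback theory.
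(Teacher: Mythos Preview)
The paper does not prove this lemma; it is quoted from \cite{Mei2010a} with the preceding remark ``which was proved in our paper \cite{Mei2010a}''. There is therefore no in-paper proof to compare against directly.

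Your strategy is nonetheless very close in spirit to how the paper handles the companion result Lemma~\ref{cross}: there the authors build the augmented system $(A,\tilde B,\tilde C)$ with $\tilde B=(B,0)$ and $\tilde C=\left(\begin{smallmatrix}P\\C\end{smallmatrix}\right)$, verify that the identity is an admissible feedback by block-triangular invertibility of $I-F$, and read off the closed-loop generators and input--output maps by block algebra. Your construction with $\mathcal B=(I,B)$ and $\Gamma(\eta,\zeta)=(\eta,0)$ is the natural dual variant when the perturbation $P$ is to enter additively through a bounded control channel rather than be absorbed into the feedback of an unbounded one; the block-triangular invertibility of $I-\mathcal F(t)\Gamma$ that you need is exactly the paper's observation \textbf{(S4)} that $(A,I,P)$ has $I$ as an admissible feedback, and the identifications $A^\Gamma=A+P$, $\mathcal B^\Gamma=(I,J^{A,A+P}B)$ follow from Theorem~\ref{relation} since $I-\mathcal D\Gamma=I$. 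So the generation and well-posedness parts, and the derivations of (\ref{Frelation1})--(\ref{Frelation}) by solving $\eta=(I-F_{A,I,P})^{-1}F_{A,B,P}u$, are sound.

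One point to tighten: your closing argument for regularity (``composition with the input--output map $F_{A+P,I,C}$ of a regular system preserves the existence of such a [Ces\`aro] limit'') is not correct as stated, because $F_{A,B,P}u_0$ is not a constant input and Ces\`aro limits are not preserved under composition with a general input--output map. The clean fix is to pass to transfer functions: regularity is equivalent to $G(\lambda)$ having a limit as $\lambda\to+\infty$ along the reals; the transfer function of $F_{A+P,I,C}F_{A,B,P}$ is $G_{A+P,I,C}(\lambda)G_{A,B,P}(\lambda)$, the first factor tends to $0$ by \textbf{(S4)} (bounded control $I$) and the second is bounded by regularity of $(A,B,P)$, so the product tends to $0$. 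Added to the limit of $G_{A,B,C}(\lambda)$ this gives regularity of $(A+P,J^{A,A+P}B,C)$ and identifies its feedthrough with that of $(A,B,C)$.
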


In (\ref{Frelation}), we replace $B$ with $I$ to get $F_{A+P,I,C}
=F_{A+P,I,C}F_{A,I,P} +F_{A,I,C}$. Hence
\begin{align}\label{teshu}
F_{A+P,I,C}= F_{A,I,C}(I-F_{A,I,P})^{-1}.
\end{align}
Substitute (\ref{teshu}) into (\ref{Frelation}) to derive
\begin{align}\label{main}
    F_{A+P,J^{A,A+P}B,C}
=F_{A,I,C}(I-F_{A,I,P})^{-1}F_{A,B,P} +F_{A,B,C}.
\end{align}
Similarly, we can obtain that
\begin{align}\label{phiab}
    \Phi_{A+P,J^{A,A+P}B}
=\Phi_{A,I}(I-F_{A,I,P})^{-1}F_{A,B,P} +\Phi_{A,B}.
\end{align}

Obviously, the following statement hold.\\
\textbf{(S1)} System $(A,B,C)$ is a regular linear system if and only if $(A,B,C,D)$ is a regular linear system.
    In this case, there holds
   $$(F_{A,B,C,D}(s)u)(t)=(F_{A,B,C}(s)u)(t)+Du(t), \ u\in L^p(R^+,U),\ s\geq t\geq 0.$$
\textbf{(S2)} $(A,B,C)$ generating a regular linear system
      $(T,\Phi,\Psi,F)$ indicates that $(A,BW,VC)$ generates a regular linear system $(T,\Phi W,V\Psi,VFW)$, where $W$ and $V$ are bounded linear operators.
    In this case, there holds
     $$F_{A,BW,VC}=VF_{A,B,C}W.$$
\textbf{(S3)} If $(A,B)$ is an abstract linear control system
and $P$ is a bounded linear operator, then $(A,B,P)$ is a regular linear system with admissible feedback operator $I$. Moreover,
$\|F_{A,B,P}(t)\|\rightarrow 0$ as $t\rightarrow 0^+$, (the proof is similar to \cite[Lemma 3.2]{Hadd2005a}).\\
\textbf{(S4)} If $(A,C)$ is an abstract linear observation system
and $M$ is a bounded linear operator, then $(A,M,C)$ is a  regular linear system with admissible feedback operator $I$. Moreover,
$\|F_{A,M,C}(t)\|\rightarrow 0$ as $t\rightarrow 0^+$, (the proof is similar to \cite[Lemma 3.8]{Mei2010}).

Linear boundary system is described in the abstract frame as follows
\cite{Malinen2006,Salamon1987}.
 \begin{eqnarray}\label{boundctr}
 \left\{
   \begin{array}{ll}
     \dot{z}(t)&=Lz(t), \\
     Gz(t)&=u(t), \\
     y(t)&=Kz(t),
   \end{array}
 \right.
 \end{eqnarray}
 where $L$, $G$ and $K$ are linear operators on $D(L)$, $D(L)$
          is continuously embedded in $X$;
$\bigg[\begin{array}{c}
            L \\
            G \\
            K
          \end{array}\bigg]
   $ is closed operator from $D(L)$ to space $X \times U\times Y$;
  $G$ is surjection and $Ker\{G\}:=\{z\in Z: Gz=0\}$ is dense in $X$;
  $L|_{Ker\{G\}}$ generates a $C_0$-semigroup on $X$.
We denote system (\ref{boundctr}) by $(L,G,K)$ for brief.

Denote $A=L|_G$, $C=K|_{D(A)}$. By \cite{Greiner1987}, $D(L)$ can be
decomposed to direct sum $D(L)=D(A)\bigoplus Ker\{\lambda-L\}$ and
the operator $G$ is bijective from $Ker\{\lambda-L\}$ onto $U$,
where $\lambda$ is any component of resolvent set of $A$. Hence we
can denote $D_{\lambda,L,G}$ by the solution operator from $z$ to
$u$ of the following function
\begin{align*}
    \left\{
      \begin{array}{ll}
        (\lambda-L)z=0, & \hbox{} \\
        Gz=u, & \hbox{}
      \end{array}
    \right.
\end{align*}
that is $z=D_{\lambda,L,G}u.$ We can define $B=(\lambda-A_{-1})D_{\lambda,L,G}\in L(U,X_{-1})$.
Then there holds
\begin{align}\label{ZB}
    D_{\lambda,L,G}=(\lambda-A_{-1})^{-1}B,\
D(L)=D(A)\bigoplus (\lambda-A_{-1})^{-1}BU,
\end{align}
and 
\begin{align}\label{GB}
    G(\lambda-A_{-1})^{-1}B=I.
\end{align}

By \cite{Malinen2006,Salamon1987,Staffans2005}, it follows that, for any $z(0)\in X$, $u\in W^{2,p}(R^+,U)$
satisfying $A_{-1}z(0)+Bu(0)\in X$, we have $z(\cdot)\in
C(R^+,D(L))$ and $y(\cdot)\in C(R^+,Y)$. There holds
$z(t)=Ax(t)+Bu(t)$ and
$y(t)=C(x(t)-(\lambda-A_{-1})^{-1}Bu(t))+K(\lambda-A_{-1})^{-1}Bu(t)$.
Then $B$ and $C$ are the control and observation operator of
$(L,G,Q)$, respectively. Boundary system $(L,G,K)$ is well-posed if
there exist positive function $m$ and $n$ on $R^+$ such that
$$\|x(t)\|+\|y\|_{L^{p}([0,t],Y)}\leq m(t)\|x(0)\|+n(t)\|u\|_{L^([0,t],U)},\ t\geq 0.$$
The corresponding transform function is $KD_\lambda$. It is regular
if it is well-posed and the limit $\lim_{\lambda\rightarrow
+\infty}KD_\lambda u$ exist for any $u\in U$. In this case, we
denote $\bar{K}u:=\lim_{\lambda\rightarrow +\infty}KD_\lambda u$ and
$\bar{K}$ is called to be the feedthrough operator. Then boundary
system $(L,G,K)$ is regular with generator $(A,B,C,\bar{K})$. We also say that the generator is $(A,B,K,\bar{K})$ and denote $K_\Lambda^A$ by $C_\Lambda^A=(K|_{D(A)})_\Lambda^A$.

The following observation is obviously: \\
\textbf{(S5)} If $(L,G)$ is an abstract
linear control system and $K$ is bounded from $X$ to $Y$, then
$(L,G,K)$ is a regular linear system generated by $(A,B,K)$ with admissible feedback operator $I$.

\section{Linear Boundary System with Delayed Boundary Feedback}
In order to deal with the delayed linear systems with boundary
control and observation, our first task is to transfer them to
linear system without delays. To do this, we observe that $w_t$ is a
solution of the boundary control system \cite{Curtain1996,Staffans2005}
\begin{align*}
    \left\{
      \begin{array}{ll}
        \frac{\partial}{\partial t}x(t,\theta)=\frac{\partial}{\partial \theta}x(t,\theta),\ x\in [-r,0]  & \hbox{ } \\
        x(t,0)=w(t). & \hbox{ }
      \end{array}
    \right.
\end{align*}

Denote $\mathfrak{X}=L^p([-r,0],X)),\
\mathfrak{A}_m:=\frac{d}{d\theta}$ with domain $D(\mathfrak{A}_m):=
W^{1,p}([-r,0],X);\ \mathfrak{G}F=F(0), \forall F\in L^p([-r,0],X)$.
Then $\mathfrak{A}:=\mathfrak{A}_m$ with domain $D(\mathfrak{A}) :=
Ker\{\mathfrak{G}\}$ is the generator of left shift $C_0$-semigroup
on $L^p([-r,0],X)$ and system is an abstract linear control system
generated by $(\mathfrak{A},\mathfrak{B})$ with
$\mathfrak{B}=(\lambda-\mathfrak{A}_{-1})e_\lambda$. Here
$e_\lambda$ is defined by $(e_\lambda x)(\theta)=e^{\lambda
\theta}x,\ x\in X, \ \theta\in[-r,0]$. System with $v(t)=y(t)$ is
described by a larger undelayed system
\begin{eqnarray*}
(ABS)\left\{
  \begin{array}{ll}
    \dot{x}(t)=\mathfrak{A}_mx(t),& \hbox{ } t\geq 0,\\
    \mathfrak{G}x(t)=w(t),& \hbox{ } t\geq 0,\\
    \dot{w}(t)=A_m w(t)+Lx(t), & \hbox{ }t\geq 0,\\
    Pw(t)=Mw(t)+Kx(t), & \hbox{ }t\geq 0,
  \end{array}
\right.
\end{eqnarray*}
which can be converted to the following boundary system
\begin{eqnarray*}
% \nonumber to remove numbering (before each equation)
  \left\{
    \begin{array}{ll}
      \frac{d}{dt}\left(
                    \begin{array}{c}
                      x(t) \\
                      w(t) \\
                    \end{array}
                  \right)=
     \left(
                            \begin{array}{cc}
                              \mathfrak{A}_m & 0 \\
                              L & A_m \\
                            \end{array}
                          \right)
        \left(
                    \begin{array}{c}
                      x(t) \\
                      w(t) \\
                    \end{array}
                  \right)
      , & \hbox{ } t\geq 0\\
      \left(
                            \begin{array}{cc}
                              \mathfrak{G} & 0 \\
                              0 & P \\
                            \end{array}
                          \right)
      \left(
                    \begin{array}{c}
                      x(t) \\
                      w(t) \\
                    \end{array}
                  \right)
      =\left(
         \begin{array}{cc}
           0 & I \\
           K & M \\
         \end{array}
       \right)
        \left(
                    \begin{array}{c}
                      x(t) \\
                      w(t) \\
                    \end{array}
                  \right), & \hbox{ } t\geq 0.
    \end{array}
  \right.
\end{eqnarray*}

We denote $\mathcal {A}_{L,K,M}=\left(
                            \begin{array}{cc}
                              \mathfrak{A}_m & 0 \\
                              L & A_m \\
                            \end{array}
                          \right)$ with domain
$D(\mathcal {A}_{L,K,M})=\bigg\{\left(
                                \begin{array}{c}
                                  z \\
                                  f \\
                                \end{array}
                              \right)\in
                              D(\mathfrak{A}_m)\times D(A_m):
z(0)=f,Pf=Mf+Kz \bigg\}$.

\begin{definition}
We say function $w:[-r,+\infty)\rightarrow X$ is a classical solution of system (\ref{bdso}) with $v(t)=y(t)$, if $w\in C([-r,+\infty),X)\cap C^1([0,\infty),X)$, and there hold $w(t)\in D(A_m)$, $w_t\in W^{1,p}([-r,0],X)$, $\dot{w}(t)=A_mw(t)+Lw_t$,
and $Pw(t)=Mw(t)+Kx(t), \ t\geq 0$.
\end{definition}

Similar to the proof of \cite{Mei2015}, we can obtain the following
theorem.

\begin{theorem}\label{wel}
Assume that $\mathcal {A}_{L,K,M}$ generates a
$C_0$-semigroup on the space $X\times \partial X$. Then system (\ref{bdso}) with $v(t)=y(t)$ is well-posed, that is, for each
initial value there is a unique classical solution
 and it depends continuously on the initial data.
\end{theorem}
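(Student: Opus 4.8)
The plan is to realize the larger undelayed boundary system $(ABS)$ as an abstract Cauchy problem on the product space $\mathcal{X} := X \times \partial X = L^p([-r,0],X) \times X$, governed by the operator $\mathcal{A}_{L,K,M}$, and then transfer the well-posedness of that Cauchy problem back to the original delay equation $(\ref{bdso})$ with $v(t)=y(t)$. First I would verify that the boundary triple in the displayed $2\times 2$ block system genuinely fits the abstract boundary-system framework $(L,G,K)$ recalled in Section~2: the operator matrix $\big[\begin{smallmatrix}\mathfrak{A}_m & 0\\ L & A_m\end{smallmatrix}\big]$ acts on $D(\mathfrak{A}_m)\times D(A_m)$, the boundary operator $\big[\begin{smallmatrix}\mathfrak{G}&0\\0&P\end{smallmatrix}\big]$ is surjective onto $X\times U$ because $\mathfrak{G}$ and $P$ are each surjective, and its kernel intersected with the constraint $\mathfrak{G}z=0$, $Pf=Mf+Kz$ is exactly $D(\mathcal{A}_{L,K,M})$ intersected with $\{z(0)=0\}$-type conditions; one checks this kernel is dense in $\mathcal{X}$ using density of $D(\mathfrak{A})\subset L^p$ and of $D(A_m)\subset X$. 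By hypothesis $\mathcal{A}_{L,K,M}$ generates a $C_0$-semigroup on $\mathcal{X}$, which is precisely the statement that $L|_{\ker G}$ generates a semigroup in the notation of $(\ref{boundctr})$.

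Next I would invoke the standard theory of homogeneous boundary systems: once the interior generator exists, for initial data $\big(\begin{smallmatrix}z_0\\f_0\end{smallmatrix}\big)\in D(\mathfrak{A}_m)\times D(A_m)$ satisfying the compatibility conditions $z_0(0)=f_0$ and $Pf_0 = Mf_0 + Kz_0$ — i.e.\ lying in $D(\mathcal{A}_{L,K,M})$ — the orbit $t\mapsto e^{t\mathcal{A}_{L,K,M}}\big(\begin{smallmatrix}z_0\\f_0\end{smallmatrix}\big)$ is a classical solution: it stays in the domain, is $C^1$ into $\mathcal{X}$, and satisfies all four lines of $(ABS)$ pointwise. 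The key identification is then that the first component $x(t)$ of this orbit is the history segment $w_t$ of its second component $w(t)$. This is the content of the auxiliary boundary control problem $\partial_t x(t,\theta)=\partial_\theta x(t,\theta)$, $x(t,0)=w(t)$ cited from \cite{Curtain1996,Staffans2005}: the left-shift dynamics on $L^p([-r,0],X)$ forces $x(t)(\theta)=x(0)(t+\theta)$ for $t+\theta\le 0$ and $x(t)(\theta)=w(t+\theta)$ for $t+\theta\ge 0$, and the compatibility $z_0(0)=f_0$ glues these so that indeed $x(t)=w_t$ with $w\in C([-r,\infty),X)\cap C^1([0,\infty),X)$. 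Feeding $x(t)=w_t$ into lines three and four of $(ABS)$ recovers exactly $\dot{w}(t)=A_m w(t)+Lw_t$ and $Pw(t)=Mw(t)+Kw_t$, so $w$ is a classical solution of $(\ref{bdso})$ with $v=y$; uniqueness follows because any classical solution $w$ of $(\ref{bdso})$ produces, via $x(t):=w_t$, a classical solution of $(ABS)$, hence equals the semigroup orbit.

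Finally, continuous dependence is read off from strong continuity and the uniform boundedness (on compact time intervals) of the semigroup $\{e^{t\mathcal{A}_{L,K,M}}\}_{t\ge 0}$: $\|w_t\|_{\mathfrak{X}} + \|w(t)\| = \big\|e^{t\mathcal{A}_{L,K,M}}\big(\begin{smallmatrix}z_0\\f_0\end{smallmatrix}\big)\big\|_{\mathcal{X}} \le M_T\,\big(\|z_0\|_{\mathfrak{X}}+\|f_0\|\big)$ for $t\in[0,T]$, which is the desired estimate on the solution in terms of the initial data. I expect the main obstacle to be the rigorous identification $x(t)=w_t$ together with the verification that the compatibility conditions defining $D(\mathcal{A}_{L,K,M})$ are exactly what is needed for this identification and for the boundary-output constraint to hold at $t=0$; this is where the density of $\ker\big[\begin{smallmatrix}\mathfrak{G}&0\\0&P\end{smallmatrix}\big]$ and the structure of $\mathfrak{B}=(\lambda-\mathfrak{A}_{-1})e_\lambda$ enter, and it is the step that the phrase ``similar to the proof of \cite{Mei2015}'' is compressing. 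Everything else is bookkeeping within the boundary-system formalism of Section~2.
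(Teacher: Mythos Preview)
Your proposal is correct and follows the same route the paper defers to \cite{Mei2015}: lift $(\ref{bdso})$ with $v=y$ to the abstract Cauchy problem $\dot X=\mathcal{A}_{L,K,M}X$ on $\mathfrak{X}\times X$, use the assumed semigroup to produce the unique classical orbit for data in $D(\mathcal{A}_{L,K,M})$, identify the first component with the history $w_t$ via the shift dynamics, and read off continuous dependence from the uniform bound on $e^{t\mathcal{A}_{L,K,M}}$. The one cosmetic wrinkle is your first paragraph's invocation of the open boundary-system template $(\ref{boundctr})$: since the hypothesis already hands you the closed-loop generator $\mathcal{A}_{L,K,M}$, you can bypass that framework entirely and go straight to the Hille--Yosida/Cauchy-problem statement, which is what your second and third paragraphs actually do.
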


The rest tasks in this section is to prove the well-posed of system
(\ref{bdso}) with $v(t)=y(t)$ and derive the spectrum relations.
We firstly introduce two lemmas.

\begin{lemma}\label{output}
Assume that the boundary control system
\begin{align*}
  (BCS)\left\{
   \begin{array}{ll}
   \dot{z}(t)&=Lz(t)\\
     Gz(t)&=u(t)
   \end{array}
 \right.
\end{align*}
is an abstract linear control system generated by
$(\mathbb{A},\mathbb{B})$. Then the boundary system $(L,G,Q)$ is a
regular linear system on $(X,U,Y)$ if and only if
$(\mathbb{A},\mathbb{B},Q)$ generates a regular linear system. In
this case, for any $z\in Z$, we have
$$Qz=Q_\Lambda^\mathbb{A} z+\bar{Q} Gz.$$
\end{lemma}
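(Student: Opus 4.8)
The plan is to decompose an arbitrary element $z\in D(L)$ using the splitting $D(L)=D(\mathbb{A})\oplus(\lambda-\mathbb{A}_{-1})^{-1}\mathbb{B}U$ supplied by (\ref{ZB}), and then translate every notion attached to the boundary system $(L,G,Q)$ into the corresponding notion attached to the node $(\mathbb{A},\mathbb{B},Q)$. First I would fix $\lambda$ in the resolvent set of $\mathbb{A}$ and write $z=z_0+(\lambda-\mathbb{A}_{-1})^{-1}\mathbb{B}Gz$ with $z_0:=z-D_{\lambda,L,G}Gz\in D(\mathbb{A})$; note $Gz_0=0$ by (\ref{GB}). Recall from the discussion preceding the lemma that the state trajectory and output of the boundary system are, for admissible data, $z(t)=\mathbb{A}z(t)$ "in $X_{-1}$" (i.e.\ $\dot z=\mathbb{A}_{-1}z+\mathbb{B}u$) and $y(t)=Q\bigl(z(t)-(\lambda-\mathbb{A}_{-1})^{-1}\mathbb{B}u(t)\bigr)+Q(\lambda-\mathbb{A}_{-1})^{-1}\mathbb{B}u(t)$, i.e.\ $y(t)=Qz_0(t)+\bar Q\,u(t)$ where on $D(\mathbb{A})$ the operator $Q$ agrees with the observation operator $C$ of $(\mathbb{A},\mathbb{B},Q)$. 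So the well-posedness/regularity of $(L,G,Q)$ as a map from $(z(0),u)$ to $(z(\cdot),y(\cdot))$ is exactly the well-posedness/regularity of the triple $(\mathbb{A},\mathbb{B},Q)$, since the input map and the $\mathbb{A}$-semigroup part are literally the control system $(\mathbb{A},\mathbb{B})$ and the only extra ingredient — the output — has just been identified.

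For the forward implication, assume $(L,G,Q)$ is regular. Since $(\mathbb{A},\mathbb{B})$ is already an abstract linear control system by hypothesis, it suffices to show $C:=Q|_{D(\mathbb{A})}$ is an admissible observation operator for $\mathbb{A}$ and that the resulting well-posed system $(\mathbb{A},\mathbb{B},C)$ is regular; but its input-output map differs from that of $(L,G,Q)$ only by the bounded feedthrough $\bar Q$ (by \textbf{(S1)}-type reasoning and the identity above), so admissibility and the existence of the limit (\ref{rD}) transfer directly. The observation-operator admissibility comes from the output estimate $\|y\|_{L^p([0,t],Y)}\le m(t)\|z(0)\|+n(t)\|u\|_{L^p}$ in the definition of a well-posed boundary system, specialized to $u=0$: then $z(\cdot)=T_{\mathbb{A}}(\cdot)z(0)$ and $y(\cdot)=CT_{\mathbb{A}}(\cdot)z(0)$, giving the finite-time $L^p$ bound that defines admissibility of $C$. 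For the converse, assume $(\mathbb{A},\mathbb{B},Q)$ generates a regular linear system; then reading the same identities in reverse produces the well-posedness estimate and the feedthrough limit for $(L,G,Q)$, with feedthrough operator $\bar Q$ coinciding with the feedthrough $D$ of the node and with $C_\Lambda^{\mathbb{A}}=Q_\Lambda^{\mathbb{A}}$ by the last paragraph of Section 2.

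For the final formula, let $z\in D(L)$ and split $z=z_0+(\lambda-\mathbb{A}_{-1})^{-1}\mathbb{B}Gz$ as above. Applying $Q$ and using that $Q$ restricted to $D(\mathbb{A})$ equals $C$, hence $Qz_0=C z_0=C^{\mathbb{A}}_\Lambda z_0=Q^{\mathbb{A}}_\Lambda z_0$ (legitimate because $z_0\in D(\mathbb{A})\subset D(C^{\mathbb{A}}_\Lambda)$), I get $Qz=Q^{\mathbb{A}}_\Lambda z_0+Q(\lambda-\mathbb{A}_{-1})^{-1}\mathbb{B}Gz$. Now $Q(\lambda-\mathbb{A}_{-1})^{-1}\mathbb{B}=G_{\mathbb{A},\mathbb{B},Q}(\lambda)=C^{\mathbb{A}}_\Lambda R(\lambda,\mathbb{A}_{-1})\mathbb{B}+D$ is the transfer function, and the $\Lambda$-extension applied to $z$ yields $Q^{\mathbb{A}}_\Lambda z=Q^{\mathbb{A}}_\Lambda z_0+C^{\mathbb{A}}_\Lambda(\lambda-\mathbb{A}_{-1})^{-1}\mathbb{B}Gz$; subtracting, the difference between $Qz$ and $Q^{\mathbb{A}}_\Lambda z$ is precisely $D\,Gz=\bar Q\,Gz$, which is the claimed identity $Qz=Q^{\mathbb{A}}_\Lambda z+\bar Q\,Gz$. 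The main obstacle I anticipate is bookkeeping rather than conceptual: one must check that $z_0$ genuinely lies in $D(C^{\mathbb{A}}_\Lambda)$ and that the $\Lambda$-limit interacts correctly with the (bounded) term $(\lambda-\mathbb{A}_{-1})^{-1}\mathbb{B}Gz$, and that the feedthrough $D$ extracted from the node via (\ref{rD}) really coincides with $\bar Q=\lim_{\lambda\to+\infty}Q D_{\lambda,L,G}$ — this last equality is essentially the definition of regularity for a boundary system combined with the transfer-function formula, but it should be spelled out.
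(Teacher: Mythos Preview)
Your proposal is correct and follows essentially the same route as the paper: both arguments rest on the splitting $z=z_0+D_{\lambda,L,G}Gz$ with $z_0\in D(\mathbb{A})$, compare $Q$ with $Q_\Lambda^{\mathbb{A}}$ on each summand, and identify the discrepancy as the feedthrough. The only difference is packaging---the paper derives the existence of $\lim_{\lambda\to\infty}Q(\lambda-\mathbb{A}_{-1})^{-1}\mathbb{B}u$ directly from the resolvent identity $(\lambda-\mathbb{A})^{-1}D_{\lambda_0}=(\lambda-\lambda_0)^{-1}(D_{\lambda_0}-D_\lambda)$ and only afterward names it $\mathbb{D}$, whereas you invoke the node's transfer-function formula and read off $D=\bar Q$; this is precisely the point you flagged as needing to be ``spelled out,'' and once you note that $C_\Lambda^{\mathbb{A}}R(\lambda,\mathbb{A}_{-1})\mathbb{B}\to 0$ strongly (the regularity of $(\mathbb{A},\mathbb{B},C,0)$), your version goes through.
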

\begin{proof}\ \
Obviously, system $(L,G,Q)$ generating a regular linear system implies
that $(\mathbb{A},\mathbb{B},Q)$ generating a regular linear system.

Next we shall prove the sufficiency. Assume that
$(\mathbb{A},\mathbb{B},Q)$ generates a regular linear system and the observation operator is $\mathbb{C}$. For
any $z\in D(L)$, we have $Gz\in U$. Assume that $\lambda\neq
\lambda_0$. The transform function of system $(L,G,Q)$ is given by
$Q(\lambda-\mathbb{A}_{-1})^{-1}\mathbb{B}$. By resolvent
equalities, it follows that
\begin{align*}
    (\lambda-\mathbb{A})^{-1}(\lambda_0-\mathbb{A}_{-1})^{-1}\mathbb{B}
=\frac{(\lambda_0-\mathbb{A}_{-1})^{-1}\mathbb{B}-(\lambda-\mathbb{A}_{-1})^{-1}\mathbb{B}}{\lambda-\lambda_0}.
\end{align*}
The assumption that $(\mathbb{A},\mathbb{B},Q)$ generates a regular
linear system implies
$(\lambda_0-\mathbb{A}_{-1})^{-1}\mathbb{B}Gz\in D(L)\subset
D(Q_\Lambda^\mathbb{A} )$. So
\begin{align}\label{feedth}
 \nonumber Q_\Lambda^\mathbb{A}(\lambda_0-\mathbb{A}_{-1})^{-1}\mathbb{B}Gz=& \lim_{\lambda\rightarrow +\infty}Q\lambda(\lambda-\mathbb{A})^{-1}(\lambda_0-\mathbb{A}_{-1})^{-1}\mathbb{B}Gz
\\
\nonumber=&\lim_{\lambda\rightarrow +\infty}Q\lambda\frac{(\lambda_0-\mathbb{A}_{-1})^{-1}\mathbb{B}Gz-(\lambda-\mathbb{A}_{-1})^{-1}\mathbb{B}Gz}{\lambda-\lambda_0}\\
=&Q(\lambda_0-\mathbb{A}_{-1})^{-1}\mathbb{B}Gz-\lim_{\lambda\rightarrow +\infty}Q(\lambda-\mathbb{A}_{-1})^{-1}\mathbb{B}Gz.
\end{align}
Since $G$ is surjective, we have that for any $u\in U$, limit
$\lim_{\lambda\rightarrow
+\infty}Q(\lambda-\mathbb{A}_{-1})^{-1}\mathbb{B}u$ exists. Set
$$\mathbb{D}u=:\lim_{\lambda\rightarrow
+\infty}Q(\lambda-\mathbb{A}_{-1})^{-1}\mathbb{B}u,\ u\in U.$$ Then
we rewrite (\ref{feedth}) by
\begin{align}\label{feedth1}
Q_\Lambda^\mathbb{A}(\lambda_0-\mathbb{A}_{-1})^{-1}\mathbb{B}Gz
=Q(\lambda_0-\mathbb{A}_{-1})^{-1}\mathbb{B}Gz-\mathbb{D}Gz.
\end{align}
By (\ref{ZB}), it follows that $z-(\lambda_0-\mathbb{A}_{-1})^{-1}\mathbb{B}Gz\in D(\mathbb{A})$. This implies that
\begin{align}\label{final}
    Q_\Lambda^\mathbb{A}(z-(\lambda_0-\mathbb{A}_{-1})^{-1}\mathbb{B}Gz)=
Q(z-(\lambda_0-\mathbb{A}_{-1})^{-1}\mathbb{B}Gz).
\end{align}
Combine (\ref{feedth1}) and (\ref{final}) to get
\begin{align}\label{ex}
 Q_\Lambda^\mathbb{A}z
=Qz-\mathbb{D}Gz.
\end{align}

For any $z(0)\in D(L)$ and $u\in W^{2,p}_{loc}(R^+,U)$ satisfy
$\mathbb{A}_{-1} z(0)+\mathbb{B}_{-1}u(0)\in X $,
we have $z(t)=T_{\mathbb{A}}(t)z(0)+\Phi_{\mathbb{A},\mathbb{B}}(t)u\in Z$ and
\begin{align}\label{11}
    \nonumber y(t)=& Qz(t)\\
\nonumber=& \mathbb{C}^\mathbb{A}_\Lambda z(t)+\mathbb{D}Gz(t)\\
\nonumber=& \mathbb{C}^\mathbb{A}_\Lambda
(T_{\mathbb{A}}(t)z(0)+\Phi_{\mathbb{A},\mathbb{B}}(t)u)+
\mathbb{D}u(t)\\
\nonumber=& [\Psi^\infty_{\mathbb{A},Q}z(0)](t)+
[F^\infty_{\mathbb{A},\mathbb{B},Q}u](t)+\mathbb{D}u(t)\\
=& [\Psi_{\mathbb{A},\mathbb{C}}(\infty)z(0)](t)+
[F_{\mathbb{A},\mathbb{B},\mathbb{C},\mathbb{D}}(\infty)u](t).
\end{align}
where the relation (\ref{ex}) and Theorem \ref{gm} is
used. Hence $(L,G,Q)$ is a regular linear system with generator
$(\mathbb{A},\mathbb{B},Q,\mathbb{D})$. By definition,
$\bar{Q}=\mathbb{D}$. The proof is therefore completed.
\end{proof}

\begin{lemma}\label{boundarycontrol}
Assume that the boundary system $(L,G,Q)$ is a regular linear system
generated by $(\mathbb{A},\mathbb{B},Q,\bar{Q})$ on $(X,U,X)$. Then,
the perturbed boundary system
\begin{eqnarray*}
 (CS)\left\{
   \begin{array}{ll}
   \dot{z}(t)&=Lz(t)+Qz(t)\\
     Gz(t)&=u(t)
   \end{array}
 \right.
 \end{eqnarray*}
is an abstract linear control system generated by
$(\mathbb{A}+Q,J^{\mathbb{A},\mathbb{A}+Q}\mathbb{B}+\bar{Q})$.
Moreover, the state $z(\cdot)$ of the equation satisfies
$z(\cdot)\in D^p(Q_\Lambda^\mathbb{A})$ and it can be expressed by
\begin{align}\label{express}
    z(t)=T_\mathbb{A}(t)x+\int_0^tT_\mathbb{A}(t-s)
[Q_\Lambda^\mathbb{A}z(s)+\bar{Q}u(s)]ds+\Phi_{\mathbb{A},\mathbb{B}}(t)u.
\end{align}

\end{lemma}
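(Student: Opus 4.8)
The plan is to reduce everything to the perturbation machinery of Lemma \ref{observe} applied to the undelayed reformulation. First I would record that, by hypothesis, $(L,G,Q)$ is a regular linear system generated by $(\mathbb{A},\mathbb{B},Q,\bar{Q})$ on $(X,U,X)$; by Lemma \ref{output} this is equivalent to $(\mathbb{A},\mathbb{B},Q)$ generating a regular linear system on $(X,U,X)$ with feedthrough $\bar Q$. Since here the output space equals the state space $X$, the operator $Q$ plays simultaneously the role of an observation operator and of a state perturbation. So I would invoke Lemma \ref{observe} with $P:=Q$ and $C:=Q$ (both admissible for $\mathbb{A}$): it yields that $(\mathbb{A}+Q,J^{\mathbb{A},\mathbb{A}+Q}\mathbb{B},Q)$ generates a regular linear system, and in particular $(\mathbb{A}+Q,J^{\mathbb{A},\mathbb{A}+Q}\mathbb{B})$ is an abstract linear control system. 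The point is that this control system is exactly the undelayed abstract realization of the boundary system $(CS)$, because forming $\mathbb{A}+Q$ corresponds to adding $Qz(t)$ to the dynamics while $G$ (hence $\mathbb{B}$, modulo the $J$-extension) is unchanged; this needs to be spelled out via the characterization $D(L)=D(\mathbb{A})\oplus(\lambda-\mathbb{A}_{-1})^{-1}\mathbb{B}U$ from (\ref{ZB}) and the identity (\ref{GB}).

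Next I would address the precise form of the control operator, namely why it is $J^{\mathbb{A},\mathbb{A}+Q}\mathbb{B}+\bar Q$ rather than just $J^{\mathbb{A},\mathbb{A}+Q}\mathbb{B}$. Here is where the feedthrough $\bar Q$ enters: when we perturb by the boundary map $Q$, the boundary condition $Gz(t)=u(t)$ together with the decomposition $z(t)=z_0(t)+(\lambda-\mathbb{A}_{-1})^{-1}\mathbb{B}u(t)$ forces an extra term. Concretely, writing $z(t)=w(t)+(\lambda-\mathbb{A}_{-1})^{-1}\mathbb{B}u(t)$ with $w(t)\in D(\mathbb{A})$ and applying $Q$, one gets $Qz(t)=Q_\Lambda^{\mathbb{A}}z(t)+\bar Q u(t)$ by the extension formula proved in Lemma \ref{output} (the line $Qz=Q_\Lambda^{\mathbb{A}}z+\bar Q Gz$). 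Feeding this back into $\dot z(t)=\mathbb{A}_{-1}z(t)+\mathbb{B}u(t)+Q_\Lambda^{\mathbb{A}}z(t)+\bar Q u(t)$ and comparing with the general form $\dot z=(\mathbb{A}+Q)_{-1}z+B^{\mathrm{new}}u$ exhibits the control operator as $J^{\mathbb{A},\mathbb{A}+Q}\mathbb{B}+\bar Q$. I would then get the variation-of-constants formula (\ref{express}) directly from Theorem \ref{gm} applied to the regular system $(\mathbb{A}+Q,J^{\mathbb{A},\mathbb{A}+Q}\mathbb{B},Q)$: Theorem \ref{gm} gives $z(t)=T_{\mathbb{A}+Q}(t)x+\Phi_{\mathbb{A}+Q,J^{\mathbb{A},\mathbb{A}+Q}\mathbb{B}}(t)(u+\bar Q(\cdot))$ and $z(\cdot)\in D^p(Q_\Lambda^{\mathbb{A}})$; then I would use the relation (\ref{Frelation1}) of Lemma \ref{observe}, i.e. $\Phi_{\mathbb{A}+Q,J^{\mathbb{A},\mathbb{A}+Q}\mathbb{B}}=\Phi_{\mathbb{A}+Q,I}F_{\mathbb{A},\mathbb{B},Q}+\Phi_{\mathbb{A},\mathbb{B}}$, together with the fact that $\Phi_{\mathbb{A}+Q,I}(t)g=\int_0^t T_{\mathbb{A}+Q}(t-s)g(s)ds$ and that the term $\int_0^t T_{\mathbb{A}+Q}(t-s)Q_\Lambda^{\mathbb{A}}z(s)ds$ can be rewritten as $\int_0^t T_{\mathbb{A}}(t-s)Q_\Lambda^{\mathbb{A}}z(s)ds$ via the standard Duhamel identity relating $T_{\mathbb{A}+Q}$ and $T_{\mathbb{A}}$, to land on the stated expression (\ref{express}).

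The main obstacle I anticipate is not the algebra but the bookkeeping of extrapolation spaces and domains: one must check that $Q_\Lambda^{\mathbb{A}}z(t)$ is well-defined for a.e.\ $t$ (i.e.\ $z(t)\in D(Q_\Lambda^{\mathbb{A}})$), which is where $z(\cdot)\in D^p(Q_\Lambda^{\mathbb{A}})$ from Theorem \ref{gm} is needed, and that the two extrapolation spaces $X_{-1}^{\mathbb{A}}$ and $X_{-1}^{\mathbb{A}+Q}$ are compared correctly through $J^{\mathbb{A},\mathbb{A}+Q}$ so that $J^{\mathbb{A},\mathbb{A}+Q}\mathbb{B}+\bar Q$ genuinely makes sense as an element of $L(U,X_{-1}^{\mathbb{A}+Q})$ (note $\bar Q\in L(U,X)$, so the sum is unproblematic once $J^{\mathbb{A},\mathbb{A}+Q}\mathbb{B}$ is placed in $X_{-1}^{\mathbb{A}+Q}$). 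A secondary subtlety is verifying that the boundary realization of $(CS)$ really has the \emph{same} input map as $(BCS)$-perturbed, i.e.\ that perturbing the interior dynamics by $Q$ does not alter the solution operator $D_{\lambda,L,G}$ of the elliptic problem — this follows because $Q$ is lower-order with respect to the boundary trace $G$, but it should be stated explicitly using (\ref{ZB})–(\ref{GB}). Once these identifications are in place, the proof is a direct assembly of Lemma \ref{output}, Lemma \ref{observe}, and Theorem \ref{gm}.
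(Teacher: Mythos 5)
Your identification of the generator $(\mathbb{A}+Q,J^{\mathbb{A},\mathbb{A}+Q}\mathbb{B}+\bar{Q})$ follows essentially the paper's route: the paper also obtains $(\mathbb{A}+Q,J^{\mathbb{A},\mathbb{A}+Q}\mathbb{B})$ as an abstract linear control system from its perturbation theorem, and then pins down the extra $\bar{Q}$ by comparing $\mathbb{B}_0Gz=Lz+Qz-(\mathbb{A}+Q)_{-1}z$ with the explicit Weiss formula for $J^{\mathbb{A},\mathbb{A}+Q}$ and the identity $Qz=Q_\Lambda^{\mathbb{A}}z+\bar{Q}Gz$ from Lemma \ref{output}, exactly as you sketch (your version is less formal about the two extrapolation spaces, but you correctly flag that issue). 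That half of your proposal is sound.

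The derivation of the mild expression (\ref{express}) contains a step that fails. You claim that ``the term $\int_0^t T_{\mathbb{A}+Q}(t-s)Q_\Lambda^{\mathbb{A}}z(s)\,ds$ can be rewritten as $\int_0^t T_{\mathbb{A}}(t-s)Q_\Lambda^{\mathbb{A}}z(s)\,ds$ via the standard Duhamel identity.'' No such interchange holds: for a fixed integrand $g$ one has
\begin{equation*}
\int_0^t T_{\mathbb{A}+Q}(t-s)g(s)\,ds-\int_0^t T_{\mathbb{A}}(t-s)g(s)\,ds
=\int_0^t T_{\mathbb{A}}(t-s)\,Q_\Lambda^{\mathbb{A}}\Big[\int_0^s T_{\mathbb{A}+Q}(s-\sigma)g(\sigma)\,d\sigma\Big]ds,
\end{equation*}
which is nonzero in general (already for bounded $Q$ and constant $g$). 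The term $Q_\Lambda^{\mathbb{A}}z(s)$ in (\ref{express}) is not obtained by swapping semigroups under the integral; it is produced precisely by these correction terms. The paper's actual argument uses the Duhamel formula for $T_{\mathbb{A}+Q}(t)x$, the nontrivial intertwining identity (\ref{kkk}), $\Phi_{\mathbb{A}+Q,I}(t)F_{\mathbb{A},\mathbb{B},Q}(t)u=\int_0^tT_{\mathbb{A}}(t-s)Q_\Lambda^{\mathbb{A}}\Phi_{\mathbb{A}+Q,J^{\mathbb{A},\mathbb{A}+Q}\mathbb{B}}(s)u\,ds$, and the analogous identity (\ref{phiapd}) for the bounded part $\bar{Q}$, and only then substitutes $\Phi_{\mathbb{A}+Q,J^{\mathbb{A},\mathbb{A}+Q}\mathbb{B}}(s)u=z(s)-T_{\mathbb{A}+Q}(s)x-\Phi_{\mathbb{A}+Q,\bar{Q}}(s)u$ to assemble $\int_0^tT_{\mathbb{A}}(t-s)[Q_\Lambda^{\mathbb{A}}z(s)+\bar{Q}u(s)]ds$. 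A related, smaller gap: applying Theorem \ref{gm} to $(\mathbb{A}+Q,J^{\mathbb{A},\mathbb{A}+Q}\mathbb{B},Q)$ only gives membership in $D^p$ of the $\Lambda$-extension with respect to $\mathbb{A}+Q$, whereas (\ref{express}) needs $z(\cdot)\in D^p(Q_\Lambda^{\mathbb{A}})$; the paper gets this from the decomposition (\ref{zdp}) together with the cited fact that $T_{\mathbb{A}+Q}(\cdot)x\in D^p(Q_\Lambda^{\mathbb{A}})$, not from Theorem \ref{gm} alone. Your final formula is correct, but the route you describe to it would not compile into a proof without these replacements.
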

\begin{proof}\ \
 By the proof of Theorem 3.9 in \cite{Mei2010},
it follows that $\bigg(\mathbb{A},\left(
 \begin{array}{cc}
 I & \mathbb{B} \\
 \end{array}
  \right), \left(
  \begin{array}{c}
  Q \\
  0 \\
   \end{array}
   \right)\bigg)
$ generates a regular linear system, and $I$ is an admissible
feedback operator. Then, the system operator, control operator and
observation operator of the closed loop system are given by:
$\mathbb{A}^I=\mathbb{A}+Q$,  $\left(
 \begin{array}{cc}
 I & \mathbb{B} \\
 \end{array}
  \right)^I=\left(
 \begin{array}{cc}
 I & J^{\mathbb{A},\mathbb{A}+Q}\mathbb{B} \\
 \end{array}
  \right)$ and
$\left(
  \begin{array}{c}
  Q \\
  0 \\
   \end{array}
   \right)^I=\left(
  \begin{array}{c}
  Q \\
  0 \\
   \end{array}
   \right),$ respectively.

Since $Q$ is admissible for $\mathbb{A}$, we derive that
$\mathbb{A}+Q$ generates a $C_0$-semigroup on $X$. Denote by
$\mathbb{B}_0$ the control operator corresponding to system $(CS)$.
Then, $$\mathbb{B}_0Gz=Lz+Qz-(\mathbb{A}+Q)_{-1}z, \forall z\in D(L).$$

Next we shall prove that
$\mathbb{B}_0=J^{\mathbb{A},\mathbb{A}+Q}\mathbb{B}.$ By
\cite[(7.14)]{Weiss1994b} and Theorem \ref{relation}, we obtain that
for any $\forall \mbox{ } x\in(\beta-A_{-1})D\bigg(\left(
  \begin{array}{c}
  Q \\
  0 \\
   \end{array}
   \right)^\mathbb{A}_\Lambda\bigg)=(\beta-A_{-1})D(Q^\mathbb{A}_\Lambda)$,
\begin{eqnarray}\label{J}
% \nonumber to remove numbering (before each equation)
\nonumber J^{\mathbb{A},\mathbb{A}+Q}x&=&(\beta-
(\mathbb{A}+Q)_{-1})(\beta-\mathbb{A}_{-1})^{-1}x+\left(
 \begin{array}{cc}
 I & \mathbb{B} \\
 \end{array}
  \right)^I\left(
  \begin{array}{c}
  Q \\
  0 \\
   \end{array}
   \right)^\mathbb{A}_\Lambda(\beta-\mathbb{A}_{-1})^{-1}x\\
&=&(\beta-(\mathbb{A}+Q)_{-1})(\beta-
\mathbb{A}_{-1})^{-1}x+Q_\Lambda^\mathbb{A}
(\beta-\mathbb{A}_{-1})^{-1}x.
\end{eqnarray}

The property of
regularity implies
$D(L)=X_1+(\beta-\mathbb{A}_{-1})^{-1}\mathbb{B}\subset
D(Q^\mathbb{A}_\Lambda)$. Obviously, for any $x_0\in D(L)$,
$(\beta-\mathbb{A}_{-1})x_0\in
(\beta-A_{-1})D(Q^\mathbb{A}_\Lambda).$
Then (\ref{J}) implies that
\begin{eqnarray}\label{J1}
% \nonumber to remove numbering (before each equation)
J^{\mathbb{A},\mathbb{A}+Q}(\beta-\mathbb{A}_{-1})x_0=(\beta-(\mathbb{A}
+Q)_{-1})x_0+Q_\Lambda^\mathbb{A}x_0.
\end{eqnarray}
On the other hand, there holds
\begin{align}\label{BG}
   \mathbb{B}Gx_0=Lx_0-\mathbb{A}_{-1}x_0, \forall x_0\in D(L).
\end{align}

Combine (\ref{J1}) and (\ref{BG}) to get
\begin{eqnarray*}
% \nonumber to remove numbering (before each equation)
J^{\mathbb{A},\mathbb{A}+Q}\mathbb{B}Gx_0&=
&Lx_0-\beta x_0+J^{\mathbb{A},\mathbb{A}+Q}(\beta-\mathbb{A}_{-1})x_0\\
&=&Lx_0+Q_\Lambda^\mathbb{A}x_0-(\mathbb{A}+Q)_{-1}x_0\\
&=&Lx_0+Qx_0-(\mathbb{A}+Q)_{-1}x_0-\bar{Q}Gx_0\\
&=&\mathbb{B}_0Gx_0-\bar{Q}Gx_0, \forall x_0\in D(L).
\end{eqnarray*}
By definition, $G$ is surjection from $D(L)$ to $U$.
This indicates that
$$\mathbb{B}_0=J^{\mathbb{A},\mathbb{A}+Q}\mathbb{B}+\bar{Q}.$$
It follows from Theorem 3.9 of \cite{Mei2010} that
$(\mathbb{A}+Q,J^{\mathbb{A},\mathbb{A}+Q}\mathbb{B})$ generates an
abstract linear control system. The boundedness of $\bar{Q}$ implies
that $(\mathbb{A}+Q,\bar{Q})$ generates an abstract linear control
system with $\Phi_{\mathbb{A}+Q,\bar{Q}}(t)u=
\int_0^tT_{\mathbb{A}+Q}(t-s)\bar{Q}u(s)ds\in X,\ u\in L^p.$ So we
derive that
$(\mathbb{A}+Q,J^{\mathbb{A},\mathbb{A}+Q}\mathbb{B}+\bar{Q})$
generates an abstract linear control system with
\begin{align}\label{jia}
    \Phi_{\mathbb{A}+Q,J^{\mathbb{A},\mathbb{A}+Q}\mathbb{B}+\bar{Q}}
=\Phi_{\mathbb{A}+Q,J^{\mathbb{A},\mathbb{A}+Q}\mathbb{B}}
+\Phi_{\mathbb{A}+Q,\bar{Q}}.
\end{align}

Below we shall show that the mild expression (\ref{express}) of the
state $z(\cdot)$ holds. It follows from \cite{Hadd2006} that
$T_{\mathbb{A}+\mathbb{P}}(\cdot)x\in D^p(Q^\mathbb{A}_\Lambda).$
By the definition of state trajectory and (\ref{jia}), we have
\begin{align}\label{zdp}
   \nonumber z(\cdot)=&T_{\mathbb{A}+Q}(\cdot)x
+\Phi_{\mathbb{A}+Q,J^{\mathbb{A},\mathbb{A}+Q}\mathbb{B}
+\bar{Q}}(\cdot)u\\
=&T_{\mathbb{A}+Q}(\cdot)x+\Phi_{\mathbb{A}+Q,
J^{\mathbb{A},\mathbb{A}+Q}\mathbb{B}}(\cdot)u
+\Phi_{\mathbb{A}+Q,\bar{Q}}(\cdot)u\in D^p(Q^\mathbb{A}_\Lambda).
\end{align}

By \textbf{(S4)} and \cite[Theorem 3.1]{Hadd2006}, $(\mathbb{A}+Q,\bar{Q},-Q)$ generates a regular linear system with admissible feedback operator $I$. So
\begin{align}\label{phiapd}
    \Phi_{\mathbb{A}+Q,\mathbb{D}}-\Phi_{\mathbb{A},I}
F_{\mathbb{A}+Q,\bar{Q},Q}=\Phi_{\mathbb{A},\bar{Q}}.
\end{align}

Recall that we have proved in \cite[(4.4)]{Mei2010a} that
\begin{align}\label{kkk}
\Phi_{\mathbb{A}+Q,I}(t)F_{\mathbb{A},\mathbb{B},Q}(t)u
=\int_0^tT(t-s)Q_\Lambda^{\mathbb{A}}\Phi_{\mathbb{A}+Q,\mathbb{B}}(s)uds,\
u\in L^p(R^+,U).
\end{align}

Combine (\ref{jia}), (\ref{zdp}), (\ref{phiapd}) and (\ref{kkk}) to get
\begin{align*}
     z(t)=&T_{\mathbb{A}+Q}(t)x+\Phi_{\mathbb{A}+Q,
J^{\mathbb{A},\mathbb{A}+Q}\mathbb{B}+\bar{Q}}(t)u\\
=&T_{\mathbb{A}+Q}(t)x+\Phi_{\mathbb{A}+Q,J^{\mathbb{A},
\mathbb{A}+Q}\mathbb{B}}(t)u+\Phi_{\mathbb{A}+Q,\bar{Q}}(t)u\\
=&T_{\mathbb{A}+Q}(t)x+\Phi_{\mathbb{A}+Q,I}(t)
F_{\mathbb{A},\mathbb{B},Q}(t)u+\Phi_{\mathbb{A}+Q,\bar{Q}}(t)u\\
=&T_{\mathbb{A}}(t)x+\int_0^tT_{\mathbb{A}}(t-s)Q_\Lambda
T_{\mathbb{A}+Q}(s)xds
\\
&+\int_0^tT_{\mathbb{A}}(t-s)Q_\Lambda \Phi_{\mathbb{A}+Q,
J^{\mathbb{A},\mathbb{A}+Q}\mathbb{B}} ds+\Phi_{\mathbb{A}+Q,
\bar{Q}}(t)u+\Phi_{\mathbb{A},\mathbb{B}}(t)u\\
=&T_{\mathbb{A}}(t)x+\int_0^tT_{\mathbb{A}}(t-s)Q_\Lambda
T_{\mathbb{A}+Q}(s)xds
\\
&+\int_0^tT_{\mathbb{A}}(t-s)Q_\Lambda
[z(s)-T_{\mathbb{A}+Q}(\cdot)x-\Phi_{\mathbb{A}+Q,\bar{Q}}(\cdot)u]ds
+\Phi_{\mathbb{A}+Q,\bar{Q}}(t)u+\Phi_{\mathbb{A},\mathbb{B}}(t)u\\
=&T_{\mathbb{A}}(t)x+\int_0^tT_{\mathbb{A}}(t-s)Q_\Lambda
[z(s)-\Phi_{\mathbb{A}+\mathbb{P},\bar{Q}}(\cdot)u]ds
+\Phi_{\mathbb{A}+Q,\bar{Q}}(t)u+\Phi_{\mathbb{A},\mathbb{B}}(t)u\\
=&T_{\mathbb{A}}(t)x+\int_0^tT_{\mathbb{A}}(t-s)Q_\Lambda
z(s)ds+\Phi_{\mathbb{A}+Q,\bar{Q}}(t)u
-\Phi_{\mathbb{A},I}(t)F_{\mathbb{A}+Q,\bar{Q},Q}(t)u
+\Phi_{\mathbb{A},\mathbb{B}}(t)u\\
=&T_{\mathbb{A}}(t)x+\int_0^tT_{\mathbb{A}}(t-s)[Q_\Lambda
z(s)+\bar{Q}u(s)]ds +\Phi_{\mathbb{A},\mathbb{B}}(t)u.
\end{align*}
The proof is therefore completed.
\end{proof}

\begin{theorem}\label{exp}
Assume that the boundary systems $(\mathfrak{A}_m,\mathfrak{G},K)$,
$(\mathfrak{A}_m,\mathfrak{G},L)$ and $(A_m,P,M)$ are regular linear
systems. Then, system
\begin{eqnarray*}
% \nonumber to remove numbering (before each equation)
  (BCS)\left\{
    % [inline block 0: 131 envs, 28302 chars -> data_tex | \begin{array}{ll}       \frac{d}{dt}\left(...]

 \right.
\end{align*}
is well-posed, which is equivalent to
that $\mathbb{A}^I$ generates a $C_0$-semigroup.
\end{lemma}
\begin{proof}\ \
Denote $\mathbb{A}^Q:=L$ with domain $D(\mathbb{A}^Q)=\{x\in Z:Gx=Qx\}.$
Since $(\mathbb{A},\mathbb{B},\mathbb{C},\mathbb{D})$ are regular linear system with admissible feedback operator $I$, by Theorem \ref{relation} it follows that
the system operator $\mathbb{A}^I$ of the closed loop system generates a $C_0$-semigroup and $\mathbb{A}^I=\mathbb{A}_{-1}+
\mathbb{B}(I-\mathbb{D})^{-1}_{Left}\mathbb{C}_\Lambda^\mathbb{A},$
with domain $D(\mathbb{A}^I)=\{x\in D(\mathbb{C}^\mathbb{A}_\Lambda):\mathbb{A}_{-1}x+
\mathbb{B}(I-\mathbb{D})^{-1}_{Left}\mathbb{C}_\Lambda^\mathbb{A}x\in X\}\subset D(\mathbb{C}_\Lambda^\mathbb{A}).$

Our aim is to show that $\mathbb{A}^I=\mathbb{A}^Q.$ For any $x\in D(\mathbb{A}^I)$, there holds
\begin{align*}
    \mathbb{A}^Ix=&\mathbb{A}_{-1}x+\mathbb{B}(I-\mathbb{D})^{-1}_{Left}\mathbb{C}_\Lambda^\mathbb{A}x\\
=&\mathbb{A}_{-1}\big(x-R(\lambda,\mathbb{A}_{-1})\mathbb{B}(I-\mathbb{D})^{-1}_{Left}
\mathbb{C}_\Lambda^\mathbb{A}x\big)+\lambda R(\lambda,\mathbb{A}_{-1})\mathbb{B}
(I-\mathbb{D})^{-1}_{Left}\mathbb{C}_\Lambda^\mathbb{A}x\in X.
\end{align*}
This implies that $x-R(\lambda,\mathbb{A}_{-1})\mathbb{B}(I-\mathbb{D})^{-1}_{Left}
\mathbb{C}_\Lambda^\mathbb{A}x\in D(\mathbb{A}).$
By (\ref{GB}), $Gx=GR(\lambda,\mathbb{A}_{-1})\mathbb{B}(I-\mathbb{D})^{-1}_{Left}
\mathbb{C}_\Lambda^\mathbb{A}x=(I-\mathbb{D})^{-1}_{Left}
\mathbb{C}_\Lambda^\mathbb{A}x.$
The combining of this and Lemma \ref{output} indicates that $$Qx=\mathbb{C}_\Lambda^\mathbb{A}x+\mathbb{D}Gx
=\mathbb{C}_\Lambda^\mathbb{A}x+\mathbb{D}(I-\mathbb{D})^{-1}_{Left}
\mathbb{C}_\Lambda^\mathbb{A}x=(I-\mathbb{D})^{-1}_{Left}
\mathbb{C}_\Lambda^\mathbb{A}x=Gx.$$
Then $\mathbb{A}^Ix=\mathbb{A}_{-1}x+\mathbb{B}(I-\mathbb{D})^{-1}_{Left}\mathbb{C}_\Lambda^\mathbb{A}x=Lx.$
Hence $\mathbb{A}^I\subset \mathbb{A}^Q.$

For $x\in D(\mathbb{A}^Q)$, $\mathbb{A}^Qx=Lx=A_{-1}x+\mathbb{B}Gx
$ and $Gx=\mathbb{C}_\Lambda^\mathbb{A}x
+\mathbb{D}Gx.$ That $I$ is an admissible feedback operator for regular linear system $(\mathbb{A},\mathbb{B},\mathbb{C},\mathbb{D})$
implies that $(I-\mathbb{D})^{-1}_{Left}$ is invert.
So $Gx=(I-\mathbb{D})^{-1}_{Left}\mathbb{C}_\Lambda^\mathbb{A}x$,
thereby $\mathbb{A}^Qx=A_{-1}x+\mathbb{B}(I-\mathbb{D})^{-1}_{Left}\mathbb{C}_\Lambda^\mathbb{A}x.$
This implies that $\mathbb{A}^Q\subset \mathbb{A}^I.$
The proof is therefore completed.
\end{proof}
\begin{remark}
In the special case that the feedthough operator is zero, the Lemma has been proved by Hadd \cite{Hadd2008}.
Our Lemma is the more generalized case and our proof is stimulated by \cite{Hadd2008}.
\end{remark}

\begin{lemma}\label{invert}
Let $E$ and $F$ be Banach spaces; $A\in L(E,E)$, $B\in L(F,E)$, $C\in L(E,F)$, and $D\in L(F,F)$. Assume that $A$ and $D$ are invertible, and $A-BD^{-1}C$ is also invertible. Then the matrix operator $\left(
   \begin{array}{cc}
     A & B \\
     C & D \\
   \end{array}
 \right)
$ is invertible.
\end{lemma}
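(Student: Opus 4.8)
\noindent\emph{Proof proposal.} The plan is to reduce the invertibility of the block operator
\[
\mathcal{M}:=\left(\begin{array}{cc} A & B \\ C & D \end{array}\right)
\]
on $E\times F$ to the hypotheses via the standard Gaussian‑elimination (Schur complement) factorization. Writing $S:=A-BD^{-1}C$ for the Schur complement, I would first verify the identity
\[
\mathcal{M}=\left(\begin{array}{cc} I & BD^{-1} \\ 0 & I \end{array}\right)
\left(\begin{array}{cc} S & 0 \\ 0 & D \end{array}\right)
\left(\begin{array}{cc} I & 0 \\ D^{-1}C & I \end{array}\right)
\]
by multiplying the three factors out; since $A,B,C,D$ and $D^{-1}$ are bounded, every block product that appears is a bounded operator on $E\times F$, so the check uses only associativity of operator composition.

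Granting the factorization, the conclusion follows immediately. The outer two factors are invertible in $L(E\times F)$ with bounded inverses
\[
\left(\begin{array}{cc} I & -BD^{-1} \\ 0 & I \end{array}\right)
\qquad\text{and}\qquad
\left(\begin{array}{cc} I & 0 \\ -D^{-1}C & I \end{array}\right),
\]
respectively, each identity being a one‑line block computation. The middle factor is block diagonal, and by hypothesis both $S$ and $D$ are invertible, so it is invertible with inverse $\mathrm{diag}(S^{-1},D^{-1})$. Since a composition of invertible bounded operators is an invertible bounded operator, $\mathcal{M}$ is invertible; multiplying the three inverses in the reverse order moreover yields the explicit Schur formula for $\mathcal{M}^{-1}$.

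Equivalently, and perhaps more transparently, I could solve $\mathcal{M}\left(\begin{smallmatrix} x \\ y \end{smallmatrix}\right)=\left(\begin{smallmatrix} f \\ g \end{smallmatrix}\right)$ by hand: the second component gives $y=D^{-1}(g-Cx)$, substituting into the first gives $Sx=f-BD^{-1}g$, hence $x=S^{-1}(f-BD^{-1}g)$ and then $y$ is determined, exhibiting a two‑sided inverse built from bounded operators. I expect no genuine obstacle here — it is a routine Schur‑complement argument — and the only point worth a remark is that boundedness of the constructed inverse is automatic on Banach spaces, being assembled from the given bounded operators together with the assumed bounded inverses $D^{-1}$ and $S^{-1}$; in fact only the invertibility of $D$ and of $A-BD^{-1}C$ is actually used.
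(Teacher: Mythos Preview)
Your argument is correct and is the clean Schur‐complement factorization; as you note, it actually uses only the invertibility of $D$ and of $S=A-BD^{-1}C$, so the hypothesis that $A$ is invertible is redundant in your route.

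The paper proceeds differently. Rather than factoring $\mathcal{M}$, it first proves that the \emph{other} Schur complement $D-CA^{-1}B$ is invertible by exhibiting its inverse via the Woodbury‐type identity
\[
(D-CA^{-1}B)^{-1}=D^{-1}+D^{-1}C\,(A-BD^{-1}C)^{-1}B\,D^{-1},
\]
checked by multiplying out on both sides; here the hypothesis $A$ invertible is genuinely used. It then writes down the explicit block inverse of $\mathcal{M}$ involving both Schur complements and leaves the verification as a ``simple calculation.'' The payoff of the paper's approach is that it produces, in the same stroke, the precise entrywise formula for $\mathcal{M}^{-1}$ that is later invoked in the proof of Theorem~\ref{yujie}, where the entries $N_1=(A-BD^{-1}C)^{-1}$ and $N_2=(D-CA^{-1}B)^{-1}$ appear separately. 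Your factorization gives the same inverse after multiplying the three factors, but the paper's presentation makes the off‐diagonal expressions $-A^{-1}B(D-CA^{-1}B)^{-1}$ and $-D^{-1}C(A-BD^{-1}C)^{-1}$ immediately visible.
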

\begin{proof}\ \
We compute
\begin{align*}
   &(D-CA^{-1}B)[D^{-1}+D^{-1}C(A-BD^{-1}C)^{-1}BD^{-1}]\\
   =& I+C(A-BD^{-1}C)^{-1}BD^{-1}-CA^{-1}BD^{-1}\\
   & -CA^{-1}[(BD^{-1}C-A)+A](A-BD^{-1}C)^{-1}BD^{-1}\\
   =& I
\end{align*}
and
\begin{align*}
    &[D^{-1}+D^{-1}C(A-BD^{-1}C)^{-1}BD^{-1}](D-CA^{-1}B)\\
    =& I-D^{-1}CA^{-1}B+D^{-1}C(A-BD^{-1}C)^{-1}B \\
     &-D^{-1}C(A-BD^{-1}C)^{-1}[(BD^{-1}C-A)+A]A^{-1}B\\
     =& I.
\end{align*}
Then $D-CA^{-1}B$ is invertible and $(D-CA^{-1}B)^{-1}=D^{-1}+D^{-1}C(A-BD^{-1}C)^{-1}BD^{-1}.$
Through simple calculation, we can see that
$\left(
   \begin{array}{cc}
     A & B \\
     C & D \\
   \end{array}
 \right)
$ is invertible and
$\left(
   \begin{array}{cc}
     A & B \\
     C & D \\
   \end{array}
 \right)^{-1}=
 \left(
   \begin{array}{cc}
     (A-BD^{-1}C)^{-1} & -A^{-1}B(D-CA^{-1}B)^{-1} \\
     -D^{-1}C(A-BD^{-1}C)^{-1} & (D-CA^{-1}B)^{-1} \\
   \end{array}
 \right).
$
The proof is completed.
\end{proof}

\begin{theorem}\label{feedback}
Assume that $(\mathfrak{A}_m,\mathfrak{G},K)$ and
$(\mathfrak{A}_m,\mathfrak{G},L)$ generate regular linear systems.
Suppose $(A_m,P,M)$ to generate a regular linear system with
admissible feedback operator $I$.
 Then system $(ABS)$ is well-posed.
\end{theorem}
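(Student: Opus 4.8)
The plan is to realise $(ABS)$ as the closed loop of the boundary control system $(BCS)$ of Theorem \ref{exp} under the feedback $u(t)=y(t)$, and then to apply Lemma \ref{pertabation}. Writing the state as $\left(\begin{smallmatrix}x\\w\end{smallmatrix}\right)$, the two boundary relations of $(ABS)$, namely $\mathfrak{G}x(t)=w(t)$ and $Pw(t)=Mw(t)+Kx(t)$, are precisely the identity $\left(\begin{smallmatrix}\mathfrak{G}&0\\0&P\end{smallmatrix}\right)\left(\begin{smallmatrix}x(t)\\w(t)\end{smallmatrix}\right)=\left(\begin{smallmatrix}0&I\\K&M\end{smallmatrix}\right)\left(\begin{smallmatrix}x(t)\\w(t)\end{smallmatrix}\right)$, i.e. the feedback operator is $I$ on the product input/output space, and the resulting closed-loop generator is exactly $\mathcal{A}_{L,K,M}$. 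Since $(\mathfrak{A}_m,\mathfrak{G},K)$, $(\mathfrak{A}_m,\mathfrak{G},L)$ and $(A_m,P,M)$ are regular, Theorem \ref{exp} says that $(BCS)$ is a regular linear system with feedthrough operator $\left(\begin{smallmatrix}0&0\\\bar K&\bar M\end{smallmatrix}\right)$ and with its state given by the mild formula (\ref{state}). Hence, once we show that $I$ is an admissible feedback operator for $(BCS)$, Lemma \ref{pertabation} gives that $\mathcal{A}_{L,K,M}$ generates a $C_0$-semigroup, and Theorem \ref{wel} then yields the well-posedness of $(ABS)$.

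The heart of the argument — and the main obstacle — is to verify that $I-F(t)$ is boundedly invertible for small $t>0$ (hence, by causality of $F$ and the cocycle identity (\ref{F}), for every $t\geq 0$), where $F$ is the input–output map of $(BCS)$. I would compute $F$ in $2\times 2$ block form from (\ref{state}) together with Lemma \ref{output}: substituting $x(s)=\Phi_{\mathfrak{A},\mathfrak{B}}(s)u_1$ into the $w$-equation and rewriting $Kx(s)=K^{\mathfrak{A}}_\Lambda x(s)+\bar K\,\mathfrak{G}x(s)$, $Mw(s)=M^{A}_\Lambda w(s)+\bar M\,Pw(s)$, one finds $F(t)=\left(\begin{smallmatrix}F_{11}(t)&F_{12}(t)\\F_{21}(t)&F_{22}(t)\end{smallmatrix}\right)$ in which $F_{22}(t)$ is precisely the input–output map of $(A_m,P,M)$, $F_{12}(t)=F_{A,B,I}(t)$, $F_{11}(t)=F_{A,I,I}(t)\,F_{\mathfrak{A},\mathfrak{B},L,\bar L}(t)$ and $F_{21}(t)=F_{\mathfrak{A},\mathfrak{B},K,\bar K}(t)+F_{A,I,M}(t)\,F_{\mathfrak{A},\mathfrak{B},L,\bar L}(t)$. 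By \textbf{(S3)}–\textbf{(S4)} the maps $F_{A,B,I}$, $F_{A,I,I}$ and $F_{A,I,M}$ have norm tending to $0$ as $t\to 0^+$, so $\|F_{11}(t)\|\to 0$ and $\|F_{12}(t)\|\to 0$, while $\|F_{21}(t)\|$ stays bounded for, say, $t\le 1$ (it is a piece of the input–output map of the well-posed system $(BCS)$).

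Now I would invoke Lemma \ref{invert} for $I-F(t)=\left(\begin{smallmatrix}I-F_{11}(t)&-F_{12}(t)\\-F_{21}(t)&I-F_{22}(t)\end{smallmatrix}\right)$. The block $I-F_{22}(t)$ is invertible by the standing hypothesis that $I$ is an admissible feedback operator for $(A_m,P,M)$; moreover $\|(I-F_{22}(t))^{-1}\|$ is bounded for $t\le 1$ because the inverse of a causal operator is causal, so $(I-F_{22}(t))^{-1}$ is the compression to $L^p([0,t])$ of $(I-F_{22}(1))^{-1}$. The block $I-F_{11}(t)$ is invertible for small $t$ by a Neumann series, since $\|F_{11}(t)\|\to 0$. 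Finally the Schur complement $(I-F_{11}(t))-F_{12}(t)(I-F_{22}(t))^{-1}F_{21}(t)$ factors as $(I-F_{11}(t))\bigl(I-(I-F_{11}(t))^{-1}F_{12}(t)(I-F_{22}(t))^{-1}F_{21}(t)\bigr)$, and the correction term has norm bounded by $\|F_{12}(t)\|$ times quantities that remain bounded as $t\to 0^+$, so it too is invertible for small $t$. By Lemma \ref{invert}, $I-F(t)$ is invertible for all small $t>0$, so $I$ is an admissible feedback operator for $(BCS)$.

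With this in hand, Lemma \ref{pertabation} applied to the regular linear system $(BCS)$ with admissible feedback operator $I$ shows that the autonomous boundary system obtained by imposing $u(t)=y(t)$ — which is exactly $(ABS)$ — is well posed, equivalently that its generator $\mathcal{A}_{L,K,M}$ generates a $C_0$-semigroup; Theorem \ref{wel} then gives the well-posedness of $(ABS)$. All the genuine work is concentrated in the invertibility of $I-F(t)$; everything else is bookkeeping with Theorem \ref{exp}, Lemma \ref{output} and the elementary facts \textbf{(S1)}–\textbf{(S4)}.
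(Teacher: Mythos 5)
Your proposal is correct and follows essentially the same route as the paper: realise $(ABS)$ as the closed loop of $(BCS)$ under the identity feedback, compute the input--output map in the same $2\times 2$ block form (your $F_{11},F_{12},F_{21},F_{22}$ coincide with the paper's entries $F_{A,I,I}F_{\mathfrak{A},\mathfrak{B},L}+F_{A,\bar L,I}$, $F_{A,B,I}$, etc.), verify invertibility of $I-F(t)$ for small $t$ via Lemma \ref{invert} using the smallness of $F_{A,B,I},F_{A,I,I},F_{A,\bar L,I}$ from \textbf{(S3)} and the admissibility of $I$ for $(A_m,P,M)$, and conclude with Lemma \ref{pertabation}. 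No substantive differences.
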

\begin{proof}\ \
Denote $\mathbb{A}=\left(
                            \begin{array}{cc}
                              \mathfrak{A} & 0 \\
                              0 & A \\
                            \end{array}
                          \right)$,
$\mathbb{B}=\left(
                            \begin{array}{cc}
                              \mathfrak{B} & 0 \\
                              0 & B \\
                            \end{array}
                          \right)$,
$\mathbb{C}=\left(
                            \begin{array}{cc}
                              0 & I \\
                              K & M \\
                            \end{array}
                          \right)$,
$\mathbb{P}=\left(
                            \begin{array}{cc}
                              0 & 0 \\
                              L & 0 \\
                            \end{array}
                          \right).$
Then $\mathbb{A}$, $\mathbb{B}$, $\mathbb{C}$ and $\mathbb{P}$
satisfies the assumption of Theorem \ref{observe} and
$\bar{\mathbb{P}}=\left(
                            \begin{array}{cc}
                              0 & 0 \\
                              \bar{L} & 0 \\
                            \end{array}
                          \right)$.
It is not hard to show that
\begin{align}\label{AIC}
    F_{\mathbb{A},I,\mathbb{C}}
  =\left(
     \begin{array}{cc}
       0 & F_{A,I,I} \\
       F_{\mathfrak{A},I,K} & F_{A,I,M} \\
     \end{array}
   \right),
\end{align}
and
\begin{align}\label{AIP}
    F_{\mathbb{A},I,\mathbb{P}}=\left(
     \begin{array}{cc}
       0 & 0 \\
       F_{\mathfrak{A},I,L} & 0 \\
     \end{array}
   \right).
\end{align}
By the proof of Theorem \ref{boundarycontrol}, it follows that
\begin{align}\label{ABP}
  F_{\mathbb{A},\mathbb{B},\mathbb{P}}=\left(
              \begin{array}{cc}
                0 & 0 \\
                F_{\mathfrak{A},\mathfrak{B},L} & 0 \\
              \end{array}
            \right),
\end{align}
\begin{align}\label{ABP}
  F_{\mathbb{A},\bar{\mathbb{P}},\mathbb{P}}=0,
\end{align}
\begin{align}\label{ABC}
  F_{\mathbb{A},\mathbb{B},\mathbb{C}}=\left(
               \begin{array}{cc}
                 0 & F_{A,B,I} \\
                 F_{\mathfrak{A},\mathfrak{A},K} & F_{A,B,M} \\
               \end{array}
             \right),
\end{align}
and
\begin{align}\label{ABC}
  F_{\mathbb{A},\bar{\mathbb{P}},\mathbb{C}}=\left(
               \begin{array}{cc}
                 F_{A,\bar{L},I} & 0 \\
                 F_{A,\bar{L},M} & 0 \\
               \end{array}
             \right).
\end{align}

Substitute (\ref{AIC}), (\ref{AIP}), (\ref{ABP}) and (\ref{ABC}) into (\ref{main}) to get
\begin{align*}
&F_{\mathbb{A}+\mathbb{P},J^{\mathbb{A},\mathbb{A}
+\mathbb{P}}\mathbb{B},\mathbb{C}}\\
=&\left(
     \begin{array}{cc}
       0 & F_{A,I,I} \\
       F_{\mathfrak{A},I,K} & F_{A,I,M} \\
     \end{array}
   \right)
   \bigg(I-\left(
     \begin{array}{cc}
       0 & 0 \\
       F_{\mathfrak{A},I,L} & 0 \\
     \end{array}
   \right)\bigg)^{-1}
\left(
              \begin{array}{cc}
                0 & 0 \\
                F_{\mathfrak{A},\mathfrak{B},L} & 0 \\
              \end{array}
            \right)
+\left(
               \begin{array}{cc}
                 0 & F_{A,B,I} \\
                 F_{\mathfrak{A},\mathfrak{B},K} &
F_{A,B,M} \\
               \end{array}
             \right)\\
=& \left(
               \begin{array}{cc}
                 F_{A,I,I}F_{\mathfrak{A},\mathfrak{B},L} & F_{A,B,I} \\
                 F_{A,I,M}F_{\mathfrak{A},\mathfrak{B},L}+F_{\mathfrak{A},\mathfrak{B},K}&
F_{A,B,M} \\
               \end{array}
             \right).
\end{align*}
The following holds
\begin{align*}
   F_{\mathbb{A}+\mathbb{P},\bar{\mathbb{P}},\mathbb{C}}
=F_{\mathbb{A},I,\mathbb{C}}(I-F_{\mathbb{A},I,\mathbb{P}})^{-1}
F_{\mathbb{A},\bar{\mathbb{P}},\mathbb{P}}+F_{\mathbb{A},\bar{\mathbb{P}},\mathbb{C}}
=\left(
               \begin{array}{cc}
                 F_{A,\bar{L},I} & 0 \\
                 F_{A,\bar{L},M} & 0 \\
               \end{array}
             \right).
\end{align*}

Denote $\mathbb{M}=\left(
                       \begin{array}{cc}
                         0 & 0 \\
                         \bar{K} & \bar{M} \\
                       \end{array}
                     \right)$. By \textbf{(S1)}, the transform function of system ($BCS$) satisfies
\begin{align*}
    &(F_{\mathbb{A}+\mathbb{P},J^{\mathbb{A},\mathbb{A}
+\mathbb{P}}\mathbb{B}+\bar{\mathbb{P}},\mathbb{C},
\mathbb{M}}(s)u)(t)\\
=&(F_{\mathbb{A}+\mathbb{P},J^{\mathbb{A},\mathbb{A}
+\mathbb{P}}\mathbb{B}+\bar{\mathbb{P}},\mathbb{C}
}(s)u)(t)+\mathbb{M}u(t)\\
=&
(F_{\mathbb{A}+\mathbb{P},J^{\mathbb{A},\mathbb{A}
+\mathbb{P}}\mathbb{B},\mathbb{C}}(s)u)(t)+
(F_{\mathbb{A}+\mathbb{P},\bar{\mathbb{P}},\mathbb{C}}(s)u)(t)
+\mathbb{M}u(t)\\
=&\bigg(\left(
               \begin{array}{cc}
                 F_{A,I,I}F_{A,B,L}+F_{A,\bar{L},I} &
F_{A,B,I} \\
                 F_{A,I,M}F_{\mathfrak{A},\mathfrak{B},L}+
F_{\mathfrak{A},\mathfrak{B},K}
+F_{A,\bar{L},M}& F_{A,B,M} \\
               \end{array}
             \right)(s)u\bigg)(t)+\left(
                       \begin{array}{cc}
                         0 & 0 \\
                         \bar{K} & \bar{M} \\
                       \end{array}
                     \right)u(t)\\
=&\bigg(\left(
               \begin{array}{cc}
                 F_{A,I,I}F_{\mathfrak{A},\mathfrak{B},L}+F_{A,\bar{L},I} &
F_{A,B,I} \\
                 F_{A,I,M}F_{\mathfrak{A},\mathfrak{B},L}+
F_{\mathfrak{A},\mathfrak{B},K,\bar{K}}
+F_{A,\bar{L},M}& F_{A,B,M,\bar{M}} \\
               \end{array}
             \right)(s)u\bigg)(t).
\end{align*}
Hence $$F_{\mathbb{A}+\mathbb{P},J^{\mathbb{A},\mathbb{A}
+\mathbb{P}}\mathbb{B}+\bar{\mathbb{P}},\mathbb{C},
\mathbb{M}}=\left(
               \begin{array}{cc}
                 F_{A,I,I}F_{\mathfrak{A},\mathfrak{B},L}+F_{A,\bar{L},I} &
F_{A,B,I} \\
                 F_{A,I,M}F_{\mathfrak{A},\mathfrak{B},L}+
F_{\mathfrak{A},\mathfrak{B},K,\bar{K}}
+F_{A,\bar{L},M}& F_{A,B,M,\bar{M}} \\
               \end{array}
             \right).$$

This implies that
\begin{align*}
 I-F_{\mathbb{A}+\mathbb{P},J^{\mathbb{A},\mathbb{A}
+\mathbb{P}}\mathbb{B}+\bar{\mathbb{P}},\mathbb{C},\mathbb{M}}=\left(
               \begin{array}{cc}
                 I-(F_{A,I,I}F_{\mathfrak{A},\mathfrak{B},L}+F_{A,\bar{L},I}) &
-F_{A,B,I} \\
                 -F_{A,I,M}F_{\mathfrak{A},\mathfrak{B},L}-
F_{\mathfrak{A},\mathfrak{B},K,\bar{K}}-F_{A,\bar{L},M} &
I-F_{A,B,M,\bar{M}} \\
               \end{array}
             \right).
\end{align*}
By \textbf{(S3)}, it follows that
\begin{align}\label{ri0}
   \|F_{A,B,I}(t)\|\rightarrow 0, \ \|F_{A,I,I}(t)\|\rightarrow 0, \ \|F_{A,\bar{L},I}(t)\|\rightarrow 0,
\end{align}
as $t\rightarrow 0$.
It follows that $$\|F_{A,I,I}(t)F_{\mathfrak{A},\mathfrak{B},L}(t)+F_{A,\bar{L},I}(t)\|\leq \|F_{A,I,I}(t)\|\|F_{\mathfrak{A},\mathfrak{B},L}(t)\|+\|F_{A,\bar{L},I}(t)\|\rightarrow 0$$ as $t\rightarrow 0,$
which implies that $I-(F_{A,I,I}F_{\mathfrak{A},\mathfrak{B},L}+F_{A,\bar{L},I})$ is invertible as $t\rightarrow 0.$
Since $I$ is admissible feedback operator for regular linear system $(A,B,M,\bar{M})$, $I-F_{A,B,M,\bar{M}}(t)$ is invertible as for any $t\geq 0$ and the transfer function $$F_{A,B,M,\bar{M}}(\cdot)[I-F_{A,B,M,\bar{M}}(\cdot)]^{-1}
=[I-F_{A,B,M,\bar{M}}(\cdot)]^{-1}-I$$ is bounded on any bounded interval.
This implies that $I-F_{A,B,M,\bar{M}}(\cdot)$ is bounded on any bounded interval. Combine this with (\ref{ri0}) to get that
\begin{align*}
  I-(F_{A,I,I}F_{\mathfrak{A},\mathfrak{B},L}+F_{A,\bar{L},I})
  -F_{A,B,I}(I-F_{A,B,M,\bar{M}})^{-1}[F_{A,I,M}F_{\mathfrak{A},\mathfrak{B},L}
+F_{\mathfrak{A},\mathfrak{B},K,\bar{K}}+F_{A,\bar{L},M}]
\end{align*}
is invertible.
By Lemma \ref{invert}, $I-F_{\mathbb{A}+\mathbb{P},J^{\mathbb{A},\mathbb{A}
+\mathbb{P}}\mathbb{B}+\bar{\mathbb{P}},\mathbb{C},\mathbb{M}}(\cdot)$
is invertible for enough small $t$. By Lemma \ref{pertabation}, system $(ABS)$ is well-posed.
\end{proof}

\begin{theorem}\label{8}
Assume that $(\mathfrak{A}_m,\mathfrak{G},K)$ and
$(\mathfrak{A}_m,\mathfrak{G},L)$ generate regular linear systems.
Suppose $(A_m,P,M)$ to generate a regular linear system with
admissible feedback operator $I$. Suppose that
$\lambda\in\rho(A)\cap\rho(A+L e_\lambda)$ and $1\in
\rho(MR(\lambda,A_{-1})B)$. Then
\begin{eqnarray*}
% \nonumber to remove numbering (before each equation)
&&\lambda\in \sigma_P(\mathcal {A}_{L,K,M})\\
&\Longleftrightarrow& 1\in
\sigma_P\big(R(\lambda,A)Le_\lambda+R(\lambda,A_{-1})B
(I-MR(\lambda,A_{-1})B)^{-1}[MR(\lambda,A)Le_\lambda
+Ke_\lambda]\big)\\
&\Longleftrightarrow& 1\in
\sigma_P\big(MR(\lambda,A_{-1})B+(MR(\lambda,A)Le_\lambda
+Ke_\lambda)(I-R(\lambda,A)Le_\lambda)^{-1}R(\lambda,A_{-1})B\big).
\end{eqnarray*}
\end{theorem}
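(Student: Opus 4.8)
The plan is to compute the point spectrum $\sigma_P(\mathcal{A}_{L,K,M})$ by hand: I reduce the eigenvalue problem to one fixed-point equation for the $X$-component of an eigenvector, and then bring that equation into the two stated forms using the push-through identities $(I-S)^{-1}=I+S(I-S)^{-1}$ and ``$I-ST$ invertible $\iff$ $I-TS$ invertible''. Write $A:=A_m|_P$ (i.e. $A_m$ restricted to $\mathrm{Ker}\,P$) and $B_\lambda:=R(\lambda,A_{-1})B=D_{\lambda,A_m,P}$, so that $PB_\lambda=I$, $A_mB_\lambda u=\lambda B_\lambda u$ (hence $B_\lambda$ maps $U$ boundedly into $D(A_m)$), and, by \eqref{ZB}, $D(A_m)=D(A)\oplus B_\lambda U$.

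Let $(z,f)\in D(\mathcal{A}_{L,K,M})$ be nonzero with $\mathcal{A}_{L,K,M}(z,f)=\lambda(z,f)$. The first row reads $\mathfrak{A}_mz=\lambda z$, which with the compatibility $z(0)=f$ has the unique solution $z=e_\lambda f$ (and $e_\lambda f\in D(\mathfrak{A}_m)$ always); since $e_\lambda$ is injective, $(z,f)\neq 0\iff f\neq 0$. I claim the remaining data---the second row $A_mf+Le_\lambda f=\lambda f$ and the boundary condition $Pf=Mf+Ke_\lambda f$---is equivalent to
\begin{equation*}
f=R(\lambda,A)Le_\lambda f+B_\lambda\big(Mf+Ke_\lambda f\big),\qquad f\in D(A_m).\tag{$\star$}
\end{equation*}
Indeed, assuming $(\star)$: applying $P$ kills $R(\lambda,A)Le_\lambda f\in D(A)=\mathrm{Ker}\,P$ and returns $Pf=Mf+Ke_\lambda f$ via $PB_\lambda=I$, while applying $\lambda-A_m$ kills $B_\lambda(\cdot)\in\mathrm{Ker}(\lambda-A_m)$ and returns $(\lambda-A_m)f=Le_\lambda f$, and the right-hand side lies in $D(A_m)$. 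Conversely, $D(A_m)=D(A)\oplus B_\lambda U$ with $PB_\lambda=I$ gives $f-B_\lambda Pf\in D(A)$, and $R(\lambda,A)$ applied to $(\lambda-A)(f-B_\lambda Pf)=(\lambda-A_m)f=Le_\lambda f$ yields $f-B_\lambda Pf=R(\lambda,A)Le_\lambda f$; substituting $Pf=Mf+Ke_\lambda f$ gives $(\star)$. Hence $\lambda\in\sigma_P(\mathcal{A}_{L,K,M})$ iff $(\star)$ has a nonzero solution in $D(A_m)$.

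For the first equivalence, rewrite $(\star)$ as $(I-B_\lambda M)f=(R(\lambda,A)Le_\lambda+B_\lambda Ke_\lambda)f$. Since $B_\lambda M$ is bounded on $D(A_m)$ and, by hypothesis, $1\in\rho(MB_\lambda)$, the operator $I-B_\lambda M$ is invertible on $D(A_m)$ with $(I-B_\lambda M)^{-1}=I+B_\lambda(I-MB_\lambda)^{-1}M$. Substituting this and collecting the $B_\lambda$-terms by $I+(I-MB_\lambda)^{-1}MB_\lambda=(I-MB_\lambda)^{-1}$ turns $(\star)$ into
\begin{equation*}
f=\Big(R(\lambda,A)Le_\lambda+B_\lambda(I-MB_\lambda)^{-1}\big[MR(\lambda,A)Le_\lambda+Ke_\lambda\big]\Big)f .
\end{equation*}
Any solution of this automatically lies in $D(A_m)$, and the steps reverse, so it is solvable by a nonzero $f$ iff $1$ lies in the point spectrum of the operator appearing in the first equivalence of the theorem.

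For the second equivalence, isolate instead the other term of $(\star)$: $(I-R(\lambda,A)Le_\lambda)f=B_\lambda(Mf+Ke_\lambda f)$. Since $Le_\lambda\in L(X)$ and $\lambda\in\rho(A)\cap\rho(A+Le_\lambda)$, the operator $I-R(\lambda,A)Le_\lambda$ is invertible on $X$ with inverse $I+R(\lambda,A+Le_\lambda)Le_\lambda$, and this inverse preserves $D(A_m)$ because $R(\lambda,A)Le_\lambda$ ranges in $D(A)\subset D(A_m)$. Putting $u:=Mf+Ke_\lambda f$ gives $f=(I-R(\lambda,A)Le_\lambda)^{-1}B_\lambda u$, and since $B_\lambda$ is injective ($PB_\lambda=I$) we have $f\neq 0\iff u\neq 0$; applying $M+Ke_\lambda$ and using $(I-S)^{-1}=I+S(I-S)^{-1}$ with $S=R(\lambda,A)Le_\lambda$ turns this into
\begin{equation*}
u=\Big(MB_\lambda+\big(MR(\lambda,A)Le_\lambda+Ke_\lambda\big)\big(I-R(\lambda,A)Le_\lambda\big)^{-1}B_\lambda\Big)u ,
\end{equation*}
while conversely a nonzero fixed point $u$ of this $Y$-operator yields the nonzero solution $f:=(I-R(\lambda,A)Le_\lambda)^{-1}B_\lambda u$ of $(\star)$. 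Recalling $B_\lambda=R(\lambda,A_{-1})B$, this is exactly the third equivalence. I expect the only real obstacle to be the domain bookkeeping---checking that $I-B_\lambda M$ and $I-R(\lambda,A)Le_\lambda$ genuinely restrict to invertible operators on $D(A_m)$, not only on $X$---and verifying at each step that the reduction preserves nonvanishing of the relevant vector, so that the point-spectrum correspondences are honest bijections.
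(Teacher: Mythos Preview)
Your proof is correct and complete; the domain bookkeeping you flag at the end indeed works out, since $R(\lambda,A)Le_\lambda$ and $B_\lambda$ both map into $D(A_m)$, so any $X$-fixed point of the manipulated equation automatically lies in $D(A_m)$, and the bijections $(I-B_\lambda M)^{-1}$ and $(I-R(\lambda,A)Le_\lambda)^{-1}$ preserve nonvanishing.

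Your route, however, differs from the paper's. The paper does not solve the eigenvalue problem for $\mathcal{A}_{L,K,M}$ directly: it first identifies $\mathcal{A}_{L,K,M}$ as the closed-loop generator of the open-loop regular system $(BCS)$ constructed in Theorem~\ref{exp} and Theorem~\ref{feedback}, then invokes \cite[Proposition~1]{Hadd2008} to conclude that $\lambda\in\sigma_P(\mathcal{A}_{L,K,M})$ iff $1\in\sigma_P\big(G_{\mathbb{A}+\mathbb{P},\,J^{\mathbb{A},\mathbb{A}+\mathbb{P}}\mathbb{B}+\bar{\mathbb{P}},\,\mathbb{C},\,\mathbb{M}}(\lambda)\big)$, where the transfer function is the explicit $2\times2$ matrix $\left(\begin{smallmatrix}R(\lambda,A)Le_\lambda & R(\lambda,A_{-1})B\\ MR(\lambda,A)Le_\lambda+Ke_\lambda & MR(\lambda,A_{-1})B\end{smallmatrix}\right)$. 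The resulting $2\times2$ system (equation \eqref{func}) is, after the relabelling $x\leftrightarrow f$, $f\leftrightarrow u=Mf+Ke_\lambda f$, exactly your fixed-point equation $(\star)$, and the final elimination via the two invertibility hypotheses is identical to yours. So the two proofs converge at the same algebraic reduction; what you gain is self-containment (no appeal to the regular-systems transfer-function machinery or the external citation), while the paper's version makes visible that this spectral statement is a direct by-product of the transfer-function computation already carried out for the well-posedness theorem.
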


\begin{proof}\ \
By \cite[Proposition 1]{Hadd2008}, it follow that for $\lambda \in
\rho\bigg(\left(
         \begin{array}{cc}
           \mathfrak{A} & 0 \\
           L &  A\\
         \end{array}
       \right)
\bigg)$, $\lambda\in \sigma_P(\mathcal {A}_{L,K,M})$ if and only if
$$1\in \sigma_P(G_{\mathbb{A}+\mathbb{P},J^{\mathbb{A},\mathbb{A}
+\mathbb{P}}\mathbb{B}+\bar{\mathbb{P}},\mathbb{C},\mathbb{M}}(\lambda)
),$$ where
\begin{align*}
   G_{\mathbb{A}+\mathbb{P},J^{\mathbb{A},\mathbb{A}
+\mathbb{P}}\mathbb{B}+\bar{\mathbb{P}},\mathbb{C},\mathbb{M}}(\lambda)
=&\left(
               \begin{array}{cc}
                 G_{A,I,I}(\lambda)G_{\mathfrak{A},\mathfrak{B},L}(\lambda)
                 +G_{A,\bar{L},I}(\lambda) &
G_{A,B,I}(\lambda) \\
                G_{A,I,M}(\lambda)G_{\mathfrak{A},\mathfrak{B},L}(\lambda)+
G_{\mathfrak{A},\mathfrak{B},K,\bar{K}}(\lambda)
+G_{A,\bar{L},M}(\lambda)& G_{A,B,M,\bar{M}}(\lambda) \\
               \end{array}
             \right)\\
=&\left(
    \begin{array}{cc}
      R(\lambda,A)L e_\lambda & R(\lambda,A_{-1})B \\
      MR(\lambda,A)L e_\lambda+Ke_\lambda & MR(\lambda,A_{-1})B\\
    \end{array}
  \right)
.
\end{align*}

Obviously, $\lambda\in\rho(\mathfrak{A})$, thereby, $\lambda \in
\rho\bigg(\left(
         \begin{array}{cc}
           \mathfrak{A} & 0 \\
           L & A \\
         \end{array}
       \right)
\bigg)$ if and only if $\lambda\in\rho(A)$,
and in this case
 $$R\bigg(\lambda,\left(
         \begin{array}{cc}
           \mathfrak{A} & 0 \\
           L &  A\\
         \end{array}
       \right)
\bigg)=\left(
         \begin{array}{cc}
           R(\lambda,\mathfrak{A}) & 0 \\
           R(\lambda,A)LR(\lambda,\mathfrak{A}) & R(\lambda,A) \\
         \end{array}
       \right).$$
It is not hard to prove that $1\in \rho(R(\lambda,A)L e_\lambda)\Leftrightarrow \lambda\in\rho(A+L e_\lambda)$.

By definition, $1\in \sigma_P(G_{\mathbb{A}+\mathbb{P},J^{\mathbb{A},\mathbb{A}
+\mathbb{P}}\mathbb{B}+\bar{\mathbb{P}},\mathbb{C},\mathbb{M}}(\lambda)
)$ if and only if the following equation has nonzero solution
\begin{align}\label{func}
    \bigg(I-\left(
    \begin{array}{cc}
      R(\lambda,A)L e_\lambda & R(\lambda,A_{-1})B \\
      MR(\lambda,A)L e_\lambda+Ke_\lambda & MR(\lambda,A_{-1})B\\
    \end{array}
  \right)\bigg) \left(
                 \begin{array}{c}
                   x \\
                   f \\
                 \end{array}
               \right)=0,
\end{align}
which is equivalent to that
\begin{align*}
    \left\{
      \begin{array}{ll}
       (I-R(\lambda,A)Le_\lambda)x=R(\lambda,A_{-1})Bf, & \hbox{} \\
        (MR(\lambda,A)L e_\lambda+Ke_\lambda)x=(I-MR(\lambda,A_{-1})B)f. & \hbox{}
      \end{array}
    \right.
\end{align*}
has nonzero solution.
The equivalence relations of this theorem are obtained because
both $(I-R(\lambda,A)Le_\lambda)$ and $(I-MR(\lambda,A_{-1})B)$
are invertible.
\end{proof}

\begin{theorem}\label{yujie}
Assume that $(\mathfrak{A}_m,\mathfrak{G},K)$ and
$(\mathfrak{A}_m,\mathfrak{G},L)$ generate regular linear systems.
Suppose $(A_m,P,M)$ to generate a regular linear system with
admissible feedback operator $I$. Suppose that
$\lambda\in\rho(A)\cap\rho(A+L e_\lambda)$ and $1\in
\rho(MR(\lambda,A_{-1})B)$. Then
\begin{eqnarray*}
% \nonumber to remove numbering (before each equation)
&&\lambda\in \rho(\mathcal {A}_{L,K,M})\\
&\Longleftarrow& 1\in
\rho\big(R(\lambda,A)Le_\lambda+R(\lambda,A_{-1})B
(I-MR(\lambda,A_{-1})B)^{-1}[MR(\lambda,A)Le_\lambda
+Ke_\lambda]\big)\\
&\Longleftrightarrow& 1\in
\rho\big(MR(\lambda,A_{-1})B+(MR(\lambda,A)Le_\lambda
+Ke_\lambda)(I-R(\lambda,A)Le_\lambda)^{-1}R(\lambda,A_{-1})B\big).
\end{eqnarray*}
Moreover, in this case,
\begin{align*}
    &R(\lambda,\mathcal {A}_{L,K,M})\\
=&
\left(
  \begin{array}{cc}
    R(\lambda,\mathfrak{A})+e_\lambda W_1 & e_\lambda W_2 \\
     W_5& W_6 \\
  \end{array}
\right),
\end{align*}
where, $$W_5=R(\lambda,A)LR(\lambda,\mathfrak{A})+R(\lambda,A)Le_\lambda W_1+R(\lambda,A_{-1}B)W_3,$$
$$W_6=R(\lambda,A)+R(\lambda,A)Le_\lambda W_2+R(\lambda,A_{-1}B)W_4,$$
$$W_1=N_1R(\lambda,A)LR(\lambda,\mathfrak{A})
+(I-R(\lambda,A)Le_\lambda)^{-1}R(\lambda,A_{-1})BN_2
[KR(\lambda,\mathfrak{A})+MR(\lambda,A)LR(\lambda,\mathfrak{A})],$$
\begin{align*}
    W_2=N_1R(\lambda,A)
+(I-R(\lambda,A)Le_\lambda)^{-1}R(\lambda,A_{-1})BN_2
MR(\lambda,A),
\end{align*}
\begin{align*}
    W_3=&(I-MR(\lambda,A_{-1})B)^{-1}(MR(\lambda,A)Le_\lambda+Ke_\lambda)
N_1R(\lambda,A)LR(\lambda,\mathfrak{A})\\
&+N_2(MR(\lambda,A)LR(\lambda,\mathfrak{A})+KR(\lambda,\mathfrak{A})),
\end{align*}
\begin{align*}
    W_4=(I-MR(\lambda,A_{-1})B)^{-1}(MR(\lambda,A)Le_\lambda+Ke_\lambda)
N_1R(\lambda,A)+N_2MR(\lambda,A),
\end{align*}
\begin{align*}
    N_1=\{I-R(\lambda,A)Le_\lambda-R(\lambda,A_{-1})B(I-MR(\lambda,A_{-1})B)^{-1}[MR(\lambda,A)Le_\lambda+Ke_\lambda]\}^{-1},
\end{align*}
\begin{align*}
    N_2=\{I-MR(\lambda,A_{-1})B-[MR(\lambda,A)Le_\lambda+Ke_\lambda](I-R(\lambda,A)Le_\lambda)^{-1}R(\lambda,A_{-1})B\}^{-1}.
\end{align*}

\end{theorem}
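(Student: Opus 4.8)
The plan is to solve the inhomogeneous resolvent equation directly, reducing it — exactly as in the proof of Theorem~\ref{8} — to a $2\times 2$ operator system, and then to invert that system with Lemma~\ref{invert}. Fix $\lambda$ as in the hypotheses (recall $\sigma(\mathfrak{A})=\emptyset$, so $R(\lambda,\mathfrak{A})$ is automatically defined). Given $\binom{g}{h}\in L^p([-r,0],X)\times X$, I would solve $(\lambda-\mathcal{A}_{L,K,M})\binom{z}{w}=\binom{g}{h}$. The first component reads $\lambda z-z'=g$; together with the domain constraint $z(0)=w$ this forces $z=e_\lambda w+R(\lambda,\mathfrak{A})g$, which indeed satisfies $z(0)=w$ (as $R(\lambda,\mathfrak{A})g\in D(\mathfrak{A})$ vanishes at $0$) and lies in $W^{1,p}([-r,0],X)$, so that $Lz$ and $Kz$ make sense. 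Substituting this $z$ into the second component $\lambda w-A_mw=Lz+h$ and into the boundary constraint $Pw=Mw+Kz$, and splitting $D(A_m)=D(A)\oplus(\lambda-A_{-1})^{-1}BU$ via \eqref{ZB} as in the proof of Theorem~\ref{8}, one obtains for the pair $(x,f):=(w,Pw)$ the system
\begin{align*}
&\Big(I-\left(\begin{array}{cc} R(\lambda,A)Le_\lambda & R(\lambda,A_{-1})B \\ MR(\lambda,A)Le_\lambda+Ke_\lambda & MR(\lambda,A_{-1})B \end{array}\right)\Big)\left(\begin{array}{c} x \\ f \end{array}\right)\\
&\qquad\qquad=\left(\begin{array}{c} R(\lambda,A)[LR(\lambda,\mathfrak{A})g+h] \\ MR(\lambda,A)[LR(\lambda,\mathfrak{A})g+h]+KR(\lambda,\mathfrak{A})g \end{array}\right),
\end{align*}
whose coefficient matrix, call it $I-G(\lambda)$, is precisely $I-G_{\mathbb{A}+\mathbb{P},J^{\mathbb{A},\mathbb{A}+\mathbb{P}}\mathbb{B}+\bar{\mathbb{P}},\mathbb{C},\mathbb{M}}(\lambda)$ computed in Theorem~\ref{8}.

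Next I would handle invertibility. By the computation in the proof of Theorem~\ref{8}, $1\in\rho(R(\lambda,A)Le_\lambda)\Leftrightarrow\lambda\in\rho(A+Le_\lambda)$, and $1\in\rho(MR(\lambda,A_{-1})B)$ holds by hypothesis; hence both diagonal blocks of $I-G(\lambda)$ are invertible. The Schur complement of $I-G(\lambda)$ with respect to its $(2,2)$ block equals $N_1^{-1}$, and the Schur complement with respect to its $(1,1)$ block equals $N_2^{-1}$; thus ``$1\in\rho$ of the first operator in the statement'' says that $N_1$ exists, and ``$1\in\rho$ of the second'' says that $N_2$ exists. Applying Lemma~\ref{invert} (together with the identity $(D-CA^{-1}B)^{-1}=D^{-1}+D^{-1}C(A-BD^{-1}C)^{-1}BD^{-1}$ proved inside it) to this $2\times 2$ block gives: if $N_1$ exists then $I-G(\lambda)$ is invertible and $N_2$ exists, and symmetrically — this is the asserted equivalence. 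If these hold, the displayed system has a unique solution $(x,f)$ for every $\binom{g}{h}$, reconstructing a unique $\binom{z}{w}=\binom{e_\lambda w+R(\lambda,\mathfrak{A})g}{w}\in D(\mathcal{A}_{L,K,M})$ that solves the resolvent equation; combined with injectivity (the homogeneous version of the same reduction) and the closedness of $\mathcal{A}_{L,K,M}$ — it generates a $C_0$-semigroup by Theorem~\ref{feedback} — this yields $\lambda\in\rho(\mathcal{A}_{L,K,M})$, i.e. the implication ``$\Longleftarrow$''. I would also note that only ``$\Longleftarrow$'' is claimed because the map $\binom{g}{h}\mapsto$ right-hand side above need not be onto $X\times U$, so bijectivity of $\lambda-\mathcal{A}_{L,K,M}$ does not in general force invertibility of $I-G(\lambda)$.

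For the explicit resolvent I would substitute the block inverse provided by Lemma~\ref{invert},
\begin{align*}
(I-G(\lambda))^{-1}=\left(\begin{array}{cc} N_1 & (I-R(\lambda,A)Le_\lambda)^{-1}R(\lambda,A_{-1})BN_2 \\ (I-MR(\lambda,A_{-1})B)^{-1}(MR(\lambda,A)Le_\lambda+Ke_\lambda)N_1 & N_2 \end{array}\right),
\end{align*}
into the system, apply it to the right-hand side, and read off $w=W_1g+W_2h$ and $Pw=W_3g+W_4h$ with $W_1,\dots,W_4$ as in the statement. Then $z=e_\lambda w+R(\lambda,\mathfrak{A})g=(R(\lambda,\mathfrak{A})+e_\lambda W_1)g+e_\lambda W_2h$, which is the first row of $R(\lambda,\mathcal{A}_{L,K,M})$; re-inserting $w$ into the fixed-point identity $w=R(\lambda,A)Le_\lambda w+R(\lambda,A_{-1})B\,Pw+R(\lambda,A)[LR(\lambda,\mathfrak{A})g+h]$ produces the alternative forms $W_5,W_6$ of the second row (which in fact coincide with $W_1,W_2$). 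The conceptual content is all in the first two steps; the main labour — and essentially the only place where mistakes are easy — is the bookkeeping in the reduction to the $2\times 2$ system, namely keeping straight which of $L$, $K$, $M$ and $R(\lambda,\mathfrak{A})$ multiplies $g$ and which multiplies $h$ in the inhomogeneous term, followed by the purely mechanical matching of the resulting expressions against the stated $W_i$.
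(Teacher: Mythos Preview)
Your plan is correct and lands on exactly the same $2\times2$ block system $I-G(\lambda)$ and the same application of Lemma~\ref{invert} that the paper uses; from that point on the two arguments are identical (compute the block inverse, multiply out, read off $W_1,\dots,W_6$, $N_1$, $N_2$).

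The only genuine difference is how you reach the block system. The paper does not solve $(\lambda-\mathcal{A}_{L,K,M})\binom{z}{w}=\binom{g}{h}$ by hand; instead it quotes Weiss's closed-loop resolvent identity
\[
R(\lambda,\mathcal{A}_{L,K,M})=R(\lambda,\mathbb{A}+\mathbb{P})+R(\lambda,(\mathbb{A}+\mathbb{P})_{-1})(J^{\mathbb{A},\mathbb{A}+\mathbb{P}}\mathbb{B}+\bar{\mathbb{P}})\,(I-G(\lambda))^{-1}\,\mathbb{C}\,R(\lambda,\mathbb{A}+\mathbb{P})
\]
from \cite{Weiss1994b}, and then computes each factor separately (in particular $R(\lambda,(\mathbb{A}+\mathbb{P})_{-1})(J\mathbb{B}+\bar{\mathbb{P}})$ is obtained by solving the associated abstract Dirichlet problem, giving the same $e_\lambda$ and $R(\lambda,A)Le_\lambda$ pieces you get). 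Your direct-substitution route is more elementary and self-contained --- it avoids invoking the feedback machinery and makes the origin of the right-hand side $(R(\lambda,A)[LR(\lambda,\mathfrak{A})g+h],\;MR(\lambda,A)[LR(\lambda,\mathfrak{A})g+h]+KR(\lambda,\mathfrak{A})g)$ completely transparent --- whereas the paper's route explains structurally why the implication is only ``$\Longleftarrow$'': Weiss's formula presupposes invertibility of $I-G(\lambda)$, and nothing in the abstract theory forces the converse. Either approach is fine; yours has the advantage that the reader need not look up \cite{Weiss1994b}.
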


\begin{proof}\ \
By \cite{Weiss1994b}, it follows that
\begin{align*}
    R(\lambda,\mathcal {A}_{L,K,M})
    =&R(\lambda,\mathbb{A}+\mathbb{P})
+
    R(\lambda,(\mathbb{A}+\mathbb{P})_{-1})(J^{\mathbb{A},\mathbb{A}
+\mathbb{P}}\mathbb{B}+\bar{\mathbb{P}})\\
&\cdot(I-G_{\mathbb{A}+\mathbb{P},J^{\mathbb{A},\mathbb{A}
+\mathbb{P}}\mathbb{B}+\bar{\mathbb{P}},\mathbb{C},\mathbb{M}}(\lambda))^{-1}\mathbb{C}R(\lambda,\mathbb{A}+\mathbb{P}).     \end{align*}
It follows that from (\ref{phiab}) that
\begin{align*}
   \Phi_{\mathbb{A}+\mathbb{P},J^{\mathbb{A},\mathbb{A}
+\mathbb{P}}\mathbb{B}+\bar{\mathbb{P}}}=
\Phi_{\mathbb{A},I}(I-F_{\mathbb{A},I,\mathbb{P}})^{-1}
F_{\mathbb{A},\mathbb{B}+\bar{\mathbb{P}},\mathbb{P}}
+\Phi_{\mathbb{A},\mathbb{B}+\bar{\mathbb{P}}},
\end{align*}
which implies that
$R(\lambda,(\mathbb{A}+\mathbb{P})_{-1})(J^{\mathbb{A},\mathbb{A}
+\mathbb{P}}\mathbb{B}+\bar{\mathbb{P}})$ is the operator from $\left(
          \begin{array}{c}
            u \\
            v \\
          \end{array}
        \right)$ to $\left(
          \begin{array}{c}
            x \\
            w \\
          \end{array}
        \right)$ defined by the following algebraic equations
\begin{eqnarray*}
% \nonumber to remove numbering (before each equation)
  \left\{
    \begin{array}{ll}
      \lambda\left(
                    \begin{array}{c}
                      x \\
                      w \\
                    \end{array}
                  \right)=
     \left(
                            \begin{array}{cc}
                              \mathfrak{A}_m & 0 \\
                              L & A_m \\
                            \end{array}
                          \right)
        \left(
                    \begin{array}{c}
                      x \\
                      w \\
                    \end{array}
                  \right)
      , & \hbox{ } t\geq 0\\
      \left(
                            \begin{array}{cc}
                              \mathfrak{G} & 0 \\
                              0 & P \\
                            \end{array}
                          \right)
      \left(
                    \begin{array}{c}
                      x \\
                      w \\
                    \end{array}
                  \right)
      =\left(
          \begin{array}{c}
            u \\
            v \\
          \end{array}
        \right).
    \end{array}
  \right.
\end{eqnarray*}
The first equation and third equation imply $x=R(\lambda,\mathfrak{A}_{-1})\mathfrak{B}u$.
Substitute it to the second equation to get
$(\lambda-A_m)w=LR(\lambda,\mathfrak{A}_{-1})\mathfrak{B}u$.
It is easy to obtain that the solutions of equations
\begin{align*}
    \left\{
      \begin{array}{ll}
        (\lambda-A_m)w_1=0, & \hbox{ } \\
        Pw_1=v, & \hbox{ }
      \end{array}
    \right.
\end{align*}
and
\begin{align*}
    \left\{
      \begin{array}{ll}
        (\lambda-A_m)w_2=LR(\lambda,\mathfrak{A}_{-1})\mathfrak{B}u, & \hbox{ } \\
        Pw_2=0, & \hbox{ }
      \end{array}
    \right.
\end{align*}
are $w_1=R(\lambda,A_{-1})Bv$
and $w_2=R(\lambda,A)LR(\lambda,\mathfrak{A}_{-1})\mathfrak{B}u$, respectively.
Hence $w=w_1+w_2=R(\lambda,A)LR(\lambda,\mathfrak{A}_{-1})\mathfrak{B}u
+R(\lambda,A_{-1})Bv.$
This implies
\begin{align*}
 R(\lambda,(\mathbb{A}+\mathbb{P})_{-1})(J^{\mathbb{A},\mathbb{A}
+\mathbb{P}}\mathbb{B}+\bar{\mathbb{P}})=&\left(
                         \begin{array}{cc}
                           R(\lambda,\mathfrak{A}_{-1})\mathfrak{B} & 0 \\
                           R(\lambda,A)LR(\lambda,\mathfrak{A}_{-1})\mathfrak{B} & R(\lambda,A_{-1})B \\
                         \end{array}
                       \right)\\
=&\left(
                         \begin{array}{cc}
                           e_\lambda & 0 \\
                           R(\lambda,A)Le_\lambda & R(\lambda,A_{-1})B \\
                         \end{array}
                       \right).
\end{align*}
By Lemma \ref{invert}, we compute the operator $(I-G_{\mathbb{A}+\mathbb{P},J^{\mathbb{A},\mathbb{A}
+\mathbb{P}}\mathbb{B}+\bar{\mathbb{P}},\mathbb{C},\mathbb{M}}(\lambda))^{-1}$
as follows
\begin{align*}
    &(I-G_{\mathbb{A}+\mathbb{P},J^{\mathbb{A},\mathbb{A}
+\mathbb{P}}\mathbb{B}+\bar{\mathbb{P}},\mathbb{C},\mathbb{M}}(\lambda))^{-1}
\\
=&\left(
    \begin{array}{cc}
      I-R(\lambda,A)L e_\lambda & -R(\lambda,A_{-1})B \\
      -MR(\lambda,A)L e_\lambda-Ke_\lambda & I-MR(\lambda,A_{-1})B\\
    \end{array}
  \right)^{-1}\\
=&\left(
    \begin{array}{cc}
      N_1 & [I-R(\lambda,A)L e_\lambda]^{-1}R(\lambda,A_{-1})BN_2 \\
      N_3 & N_2 \\
    \end{array}
  \right),
\end{align*}
where
$N_1=\{I-R(\lambda,A)Le_\lambda-R(\lambda,A_{-1})B(I-MR(\lambda,A_{-1})B)^{-1}[MR(\lambda,A)Le_\lambda+Ke_\lambda]\}^{-1},
$
$N_2=\{I-MR(\lambda,A_{-1})B-[MR(\lambda,A)Le_\lambda+Ke_\lambda](I-R(\lambda,A)Le_\lambda)^{-1}R(\lambda,A_{-1})B\}^{-1},
$ $N_3=[I-MR(\lambda,A_{-1})B]^{-1}[MR(\lambda,A)L
e_\lambda+Ke_\lambda]N_1$. The proof can be completed by simple
computation.
\end{proof}

\section{Linear Boundary Systems
with Delays in State and Boundary Output } In this section, we
consider boundary control systems with delays in state and boundary
output
\begin{eqnarray*}
(DLS)\left\{
  \begin{array}{ll}
        \dot{w}(t)=A_m w(t)+Lw_t, & \hbox{ }t\geq 0,\\
    Pw(t)=v(t),& \hbox{ }t\geq 0,\\
    y(t)=Mw(t)+Kw_t, & \hbox{ }t\geq 0,
  \end{array}
\right.
\end{eqnarray*}
where the operators $A_m$, $L$,
$P$, $M$ and $K$ are defined as in the above section;
for $t\geq 0$, $w_t$ is the history function defined by $w_t(\theta)=w(t+\theta),\ \theta\in[-r,0]$.

Let $\mathfrak{A}_m$ and $\mathfrak{B}_m$ be defined as in the above section. Then system $(DLS)$ can be converted to the following control system
\begin{align*}
\left\{
    \begin{array}{ll}
      \frac{d}{dt}\left(
                    \begin{array}{c}
                      x(t) \\
                      w(t) \\
                    \end{array}
                  \right)=
     \left(
                            \begin{array}{cc}
                              \mathfrak{A}_m & 0 \\
                              L & A_m \\
                            \end{array}
                          \right)
        \left(
                    \begin{array}{c}
                      x(t) \\
                      w(t) \\
                    \end{array}
                  \right)
      , & \hbox{ } t\geq 0\\
      \left(
                            \begin{array}{cc}
                              \mathfrak{G} & 0 \\
                              0 & P \\
                            \end{array}
                          \right)
      \left(
                    \begin{array}{c}
                      x(t) \\
                      w(t) \\
                    \end{array}
                  \right)
      =\left(
         \begin{array}{cc}
           0 & I \\
           0 & 0 \\
         \end{array}
       \right)
        \left(
                    \begin{array}{c}
                      x(t) \\
                      w(t) \\
                    \end{array}
                  \right)+\left(
                            \begin{array}{c}
                              0 \\
                              I \\
                            \end{array}
                          \right)v(t)
, & \hbox{ } t\geq 0\\
      y(t)=\left(
             \begin{array}{cc}
               K & M \\
             \end{array}
           \right)
        \left(
                    \begin{array}{c}
                      x(t) \\
                      w(t) \\
                    \end{array}
                  \right). & \hbox{ } t\geq 0.
    \end{array}
  \right.
\end{align*}

In order to prove the regularity of system ($DLS$), we have to introduce two lemmas.
\begin{lemma}\label{boundarycontrol1}
Assume that the boundary system $(L,G,Q)$ is a regular linear system
generated by $(\mathbb{A},\mathbb{B},\mathbb{C},\mathbb{D})$ on $(X,U,X)$ with
admissible feedback operator $I$. Then the system
\begin{align*}
  (OS)\left\{
   \begin{array}{ll}
   \dot{z}(t)&=Lz(t)\\
     Gz(t)&=Qz(t)+v(t)\\
   \end{array}
 \right.
\end{align*}
is an abstract linear control system generated by
$(\mathbb{A}^I,\mathbb{B}^I).$
\end{lemma}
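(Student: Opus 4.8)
The plan is to recognise the system $(OS)$ as the closed-loop system obtained from the regular boundary system $(L,G,Q)$ under the admissible feedback $\Gamma=I$, rewritten again in boundary form, and then to read off its generator from the feedback theory already developed.

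First I would collect what comes for free. Applying Theorem~\ref{relation} to $\Sigma=(\mathbb{A},\mathbb{B},\mathbb{C},\mathbb{D})$ with $\Gamma=I$ --- equivalently, invoking Lemma~\ref{pertabation} --- the operator $\mathbb{A}^I$ generates a $C_0$-semigroup, it coincides with $L$ on $D(\mathbb{A}^I)=\{z\in D(L):Gz=Qz\}$, and the closed-loop system $\Sigma^I$ is well-posed with control operator $\mathbb{B}^I$, so that $(\mathbb{A}^I,\mathbb{B}^I)$ is an abstract linear control system (with $\mathbb{B}^I=J^{\mathbb{A},\mathbb{A}^I}\mathbb{B}$ when $I-\mathbb{D}$ is invertible). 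Thus the free dynamics of $(OS)$ is already governed by $T_{\mathbb{A}^I}$, and everything reduces to showing that the control operator of the boundary control system $(OS)$ equals $\mathbb{B}^I$.

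Next I would regard $(OS)$ as the boundary control system $\dot z=Lz$, $\widetilde G z=v$ with boundary operator $\widetilde G:=G-Q$, where by Lemma~\ref{output} the operator $Qz=\mathbb{C}^{\mathbb{A}}_\Lambda z+\mathbb{D}Gz$ is well defined on $D(L)$; one checks that $\widetilde G$ is surjective, that $z\mapsto(Lz,\widetilde Gz)$ is closed, that $\ker\widetilde G=D(\mathbb{A}^I)$ is dense, and that $L|_{\ker\widetilde G}=\mathbb{A}^I$ generates a $C_0$-semigroup --- so $(OS)$ is an abstract linear control system whose control operator is, in the notation of the discussion around (\ref{ZB}), $\mathbb{B}_0=(\lambda-\mathbb{A}^I_{-1})D_{\lambda,L,\widetilde G}$, with $D_{\lambda,L,\widetilde G}v$ the solution of $(\lambda-L)z=0$, $\widetilde Gz=v$, for a suitable $\lambda\in\rho(\mathbb{A})\cap\rho(\mathbb{A}^I)$. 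Solving this boundary value problem: $(\lambda-L)z=0$ together with $Lz=\mathbb{A}_{-1}z+\mathbb{B}Gz$ gives $z=R(\lambda,\mathbb{A}_{-1})\mathbb{B}Gz$; substituting into $Qz=\mathbb{C}^{\mathbb{A}}_\Lambda z+\mathbb{D}Gz$ and using (\ref{GB}) yields $Qz=\mathbb{G}(\lambda)Gz$ with $\mathbb{G}(\lambda):=QR(\lambda,\mathbb{A}_{-1})\mathbb{B}=\mathbb{C}^{\mathbb{A}}_\Lambda R(\lambda,\mathbb{A}_{-1})\mathbb{B}+\mathbb{D}$, so the boundary condition $Gz=Qz+v$ becomes $(I-\mathbb{G}(\lambda))Gz=v$; since $I$ is an admissible feedback operator, $I-\mathbb{G}(\lambda)$ is boundedly invertible for $Re(\lambda)$ large, and hence $D_{\lambda,L,\widetilde G}=R(\lambda,\mathbb{A}_{-1})\mathbb{B}(I-\mathbb{G}(\lambda))^{-1}$. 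Inserting this, the formula $\mathbb{A}^I_{-1}=\mathbb{A}_{-1}+\mathbb{B}(I-\mathbb{D})_{left}^{-1}\mathbb{C}^{\mathbb{A}}_\Lambda$ from the proof of Lemma~\ref{pertabation}, and $\mathbb{C}^{\mathbb{A}}_\Lambda R(\lambda,\mathbb{A}_{-1})\mathbb{B}=\mathbb{G}(\lambda)-\mathbb{D}$, into $\mathbb{B}_0=(\lambda-\mathbb{A}^I_{-1})D_{\lambda,L,\widetilde G}$ and cancelling the factor $(I-\mathbb{G}(\lambda))^{-1}$ leaves $\mathbb{B}_0=\mathbb{B}(I-\mathbb{D})_{left}^{-1}$, which is $\lambda$-independent (as it must be) and coincides with $\mathbb{B}^I$; this proves $(OS)$ is generated by $(\mathbb{A}^I,\mathbb{B}^I)$.

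The step I expect to be the main obstacle is making that last computation rigorous: since $D_{\lambda,L,\widetilde G}v$ lies in $X$ but not in $D(\mathbb{A}^I)$, one must be careful about where $(\lambda-\mathbb{A}^I_{-1})D_{\lambda,L,\widetilde G}v$ lives, and the manipulation involves the $\Lambda$-extension $\mathbb{C}^{\mathbb{A}}_\Lambda$, the left inverse $(I-\mathbb{D})_{left}^{-1}$ (an honest inverse only when $I-\mathbb{D}$ is), and implicitly the maps $J^{\mathbb{A},\mathbb{A}^I}$ relating the extrapolation spaces $X^{\mathbb{A}}_{-1}$ and $X^{\mathbb{A}^I}_{-1}$ from Theorem~\ref{relation}; verifying that $\widetilde G=G-Q$ is admissible as a boundary operator (surjectivity, closedness, density of the kernel) also relies essentially on the regularity of $(L,G,Q)$ and on $I$ being an admissible feedback operator. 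A more pedestrian alternative that sidesteps part of this is to take $\mathbb{A}^I$ and $\mathbb{B}^I$ from Theorem~\ref{relation} at the outset and to verify directly, via Theorem~\ref{gm}, that for $z_0\in X$ and $v\in W^{2,p}_{loc}(R^+,U)$ with $\mathbb{A}^I_{-1}z_0+\mathbb{B}^Iv(0)\in X$ the mild trajectory $z(t)=T_{\mathbb{A}^I}(t)z_0+\Phi_{\mathbb{A}^I,\mathbb{B}^I}(t)v$ lies in $D(L)$ and satisfies $\dot z(t)=Lz(t)$ and $Gz(t)=Qz(t)+v(t)$.
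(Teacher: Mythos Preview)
Your approach is essentially the one the paper has in mind: the paper does not give a proof of this lemma but only the remark that the case $\bar Q=0$ is \cite[Theorem 10]{Hadd2008} and that the general case ``can be easily proved by the same procedure''; it then adds the one-line explanation that $(OS)$ is nothing but the closed-loop system of $\dot z=\mathbb{A}_{-1}z+\mathbb{B}u$, $y=\mathbb{C}^{\mathbb{A}}_\Lambda z+\mathbb{D}u$ under the feedback $u=y+v$. This is exactly your starting observation, and your computation of the Dirichlet map $D_{\lambda,L,\widetilde G}=R(\lambda,\mathbb{A}_{-1})\mathbb{B}(I-\mathbb{G}(\lambda))^{-1}$ is the right way to make it precise.

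One comment on the last step: rather than expanding $\mathbb{A}^I_{-1}$ and arriving at the expression $\mathbb{B}(I-\mathbb{D})_{left}^{-1}$ (which, as you note, only makes sense after transporting via $J^{\mathbb{A},\mathbb{A}^I}$ and is delicate when $I-\mathbb{D}$ has only a left inverse), it is cleaner to invoke directly the closed-loop resolvent identity from \cite{Weiss1994b}, namely $R(\lambda,\mathbb{A}^I_{-1})\mathbb{B}^I=R(\lambda,\mathbb{A}_{-1})\mathbb{B}(I-\mathbb{G}(\lambda))^{-1}$, which shows at once that $D_{\lambda,L,\widetilde G}=R(\lambda,\mathbb{A}^I_{-1})\mathbb{B}^I$ and hence $\mathbb{B}_0=(\lambda-\mathbb{A}^I_{-1})D_{\lambda,L,\widetilde G}=\mathbb{B}^I$. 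This sidesteps the extrapolation-space bookkeeping you flagged as the main obstacle.
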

\begin{remark}
In the special case that $\bar{Q}=0$,
Lemma \ref{boundarycontrol1}
has been proved in \cite[Theorem 10]{Hadd2008} and our
Lemma can be easily proved by the same procedure.
\end{remark}

The above lemma means that system $(OS)$ can obtained by taking the closed loop system of
\begin{align*}
    \left\{
      \begin{array}{ll}
        \dot{z}(t)=\mathbb{A}_{-1}z(t)+\mathbb{B}u(t), & \hbox{ } \\
        y(t)=\mathbb{C}_\Lambda^\mathbb{A}z(t)+\mathbb{D}u(t) & \hbox{ }
      \end{array}
    \right.
\end{align*}
under the feedback $u(t)=y(t)+v(t)$.

\begin{lemma}\label{cross}
Let $(A,B,P)$ generate a regular linear system with admissible feedback operator $I$ on $(X,U,U)$, and $(A,B,C)$ generate a regular linear system on $(X,U,Y)$. Then $(A^I,B^I,C_\Lambda^A)$ generates a regular linear system,
and there holds
$$F_{A^I,B^I,C_\Lambda^A}=F_{A,B,C}(I-F_{A,B,P})^{-1}.$$
where $A^I=(A+BP_\Lambda^A)|_X$ and $B^I=J^{A,A^I}B.$
\end{lemma}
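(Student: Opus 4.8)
The plan is to realize the system $(A^I,B^I,C_\Lambda^A)$ as a genuine feedback closure and then apply the general feedback machinery (Theorem \ref{relation}) together with the known identities (S1)--(S4) and (\ref{main})--(\ref{phiab}). First I would note that by hypothesis $(A,B,P)$ is a regular linear system on $(X,U,U)$ with admissible feedback operator $I$, so Theorem \ref{relation} applies to the triple $(A,B,P)$ (with feedthrough $0$, hence $I-D\Gamma = I$ invertible) and yields that the closed-loop system $\Sigma^I$ is well-posed with generating operators exactly $A^I = (A_{-1}+BP_\Lambda^A)|_X$, $B^I = J^{A,A^I}B$, and $C^I = P_\Lambda^A$ restricted to $D(A^I)$; moreover $D(C_\Lambda^A)\supset D((C^I)^{A^I}_\Lambda)$ in the relevant sense. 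The point is that $A^I$ is precisely the operator appearing in the statement, and $B^I$ is the transported control operator.

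The core of the argument is the transfer-function (equivalently input-output-map) identity $F_{A^I,B^I,C_\Lambda^A}=F_{A,B,C}(I-F_{A,B,P})^{-1}$. I would obtain this by combining two facts. On one hand, $\Phi_{A^I,B^I}$ is related to $\Phi_{A,B}$ through the feedback: from (\ref{phiab}) with $P$ in the role of the perturbation one gets $\Phi_{A^I,B^I}=\Phi_{A,I}(I-F_{A,I,P})^{-1}F_{A,B,P}+\Phi_{A,B}$, and similarly from (\ref{teshu}) one has $F_{A^I,I,C}=F_{A,I,C}(I-F_{A,I,P})^{-1}$. On the other hand, one can run the output $C$ through the same closed loop: since $(A,B,C)$ is regular and $I$ is an admissible feedback operator for $(A,B,P)$, the pair $\big(A,(B\ \, I),\binom{P}{C}\big)$ generates a regular linear system for which $I$ (on the first component) is admissible, and the corresponding closed-loop output operator is $C_\Lambda^A$ while the closed-loop control operator is $B^I$; reading off the $(2,1)$ block of the closed-loop input-output map and using the series-feedback formula $F^{\mathrm{closed}} = F_{A,B,C}(I - F_{A,B,P})^{-1}$ (the standard identity, provable exactly as (\ref{teshu})--(\ref{main}) were proved, or by substituting (\ref{teshu}) into (\ref{Frelation})) gives the claimed relation. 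The regularity of $(A^I,B^I,C_\Lambda^A)$ then follows because $(I-F_{A,B,P}(t))^{-1}$ exists for all $t\geq0$ (admissibility of the feedback $I$), is strongly continuous, and $F_{A,B,C}$ is the input-output map of a regular system, so the Cesàro limit in (\ref{rD}) exists for $F_{A^I,B^I,C_\Lambda^A}$; in fact the feedthrough of the new system is $\bar{D}=D_{A,B,C}(I-D_{A,B,P})^{-1}=D_{A,B,C}$ since $D_{A,B,P}=0$.

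The main obstacle I anticipate is the careful bookkeeping of domains and of the $\Lambda$-extensions: one must verify that the observation operator of the closed-loop system is genuinely $C_\Lambda^A$ (not merely $C$ restricted to $D(A^I)$), which requires checking that $D(A^I)\subset D(C_\Lambda^A)$ and that the $\Lambda$-extension of $C^I$ with respect to $A^I$ agrees with $C_\Lambda^A$ on $D(A^I)$. This is exactly the kind of statement guaranteed by the last sentence of Theorem \ref{relation} (the case $I-D\Gamma$ invertible, here with $D=0$ trivially invertible), so the obstacle is really one of invoking that theorem in the correct generality rather than of genuine new difficulty. Once the domain identifications are in place, the input-output identity is a routine manipulation of (\ref{Frelation1})--(\ref{phiab}) exactly paralleling the derivation of (\ref{main}), and the regularity claim is immediate from the strong continuity and boundedness on bounded intervals of $(I-F_{A,B,P})^{-1}$.
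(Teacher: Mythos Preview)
Your second approach---building the enlarged system with stacked observation $\binom{P}{C}$, closing the feedback loop, and projecting out the $C$-component---is exactly what the paper does, and that part is correct. The paper takes $\tilde{B}=(B,0)$ (not $(B,I)$; the second input channel must be trivial so that full-identity feedback on the enlarged system reduces precisely to the $I$-feedback on $(A,B,P)$) and $\tilde{C}=\binom{P}{C}$, observes that the block input--output map $\tilde{F}=\left(\begin{smallmatrix}F_{A,B,P}&0\\F_{A,B,C}&0\end{smallmatrix}\right)$ has $I$ admissible because the only nontrivial diagonal block is $F_{A,B,P}$, applies Theorem \ref{relation} to obtain closed-loop generators $\big(A^I,(B^I,0),\binom{P_\Lambda^A}{C_\Lambda^A}\big)$, and reads off $F_{A^I,B^I,C_\Lambda^A}$ as the $(2,1)$ entry of the closed-loop map, namely $F_{A,B,C}(I-F_{A,B,P})^{-1}$. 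Your remark about domains is also on target: the last clause of Theorem \ref{relation} (with $D=0$) is exactly what justifies that the closed-loop observation is $C_\Lambda^A$.

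However, your first approach via (\ref{teshu}) and (\ref{phiab}) is misdirected and should be dropped. Those identities come from Lemma \ref{observe} and concern the \emph{additive} perturbation $A\mapsto A+P$ (with $P$ an admissible observation into $X$ itself), yielding $\Phi_{A+P,J^{A,A+P}B}$ and $F_{A+P,I,C}$. The operator in the present lemma is the \emph{feedback} closure $A^I=(A_{-1}+BP_\Lambda^A)|_X$, which is a different perturbation (by $BP_\Lambda^A$, not by $P$); in particular $\Phi_{A^I,B^I}$ is not given by (\ref{phiab}), and writing $F_{A^I,I,C}=F_{A,I,C}(I-F_{A,I,P})^{-1}$ is not justified by (\ref{teshu}). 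The enlarged-system argument alone already gives both the regularity and the input--output identity, so nothing is lost by discarding this part.
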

\begin{proof}\ \
Consider the
operators $\tilde{B}:=(B, 0): X\times U\rightarrow X_{-1}$,
$\tilde{C}=\left(
                                                          \begin{array}{c}
                                                            P \\
                                                            C \\
                                                          \end{array}
                                                        \right)
: X\rightarrow X\times U$.

By the definition and Lemma \ref{bound}, it is easy to prove that
$(A,\tilde{B},\tilde{C})$ generates a regular linear system given by
\begin{eqnarray*}
% \nonumber to remove numbering (before each equation)
  \Sigma_{A,\tilde{B},\tilde{C}}:=\left(
                                    \begin{array}{cc}
                                      T & (\Phi_{A,IB},0) \\
                                      \left(
                       \begin{array}{c}
                         \Psi_{A,P} \\
                         \Psi_{A,C} \\
                       \end{array}
                     \right)& \left(
                                 \begin{array}{cc}
                                   F_{A,B,P} & 0 \\
                                   F_{A,B,C} & 0 \\
                                 \end{array}
                               \right)
                      \\
                                    \end{array}
                                  \right).
\end{eqnarray*}
Observe that $I$ is an admissible feedback operator for $\Sigma_{A,B,P}$. So $$I_{X\times U}-\left(
                                       \begin{array}{cc}
                                         F_{A,B,P} & 0 \\
                                         F_{A,B,C} & 0 \\
                                       \end{array}
                                     \right)
=\left(
                                       \begin{array}{cc}
                                         I-F_{A,B,P} & 0 \\
                                         -F_{A,B,C} & I \\
                                       \end{array}
                                     \right)$$ is invert
and
$$\bigg(I_{X\times U}-\left(
                                       \begin{array}{cc}
                                         F_{A,B,P} & 0 \\
                                         F_{A,B,C} & 0 \\
                                       \end{array}
                                     \right)\bigg)^{-1}=\left(
                                       \begin{array}{cc}
                                         (I-F_{A,B,P})^{-1} & 0 \\
                                         F_{A,B,C}(I-F_{A,B,P})^{-1} & I \\
                                       \end{array}
                                     \right),$$
which indicates that $I_{X\times U}$ is an admissible feedback operator for $\Sigma_{A,\tilde{B},\tilde{C}}$. By theorem
\ref{relation}, it follows that $A^{I_{X\times U}}=(A_{-1}+\tilde{B}\tilde{C}_\Lambda^{\tilde{A}})|_X
=(A_{-1}+BP_\Lambda^A)|_X=A^I$,
$\tilde{B}^{I_{X\times U}}=J^{A,A^I}\tilde{B}=\left(
                                                \begin{array}{cc}
                                                  J^{A,A^I}B & 0 \\
                                                \end{array}
                                              \right)
=\left(
   \begin{array}{cc}
     B^I & 0 \\
   \end{array}
 \right)
$, $\tilde{C}^I_{X\times U}=\tilde{C}_\Lambda^{A}=
\left(
  \begin{array}{c}
    P_\Lambda^A \\
    C_\Lambda^A \\
  \end{array}
\right)
$
and
\begin{align*}
% \nonumber to remove numbering (before each equation)
 F_{A^I,B^I,C_\Lambda^A}
&=\left(
    \begin{array}{cc}
      0 & I \\
    \end{array}
  \right)
F_{A^{I_{X\times U}},\tilde{B}^{I_{X\times U}},\tilde{C}^{I_{X\times U}}}\left(
      \begin{array}{c}
        I \\
        0 \\
      \end{array}
    \right)
\\
  &=F_{A,B,C}(I-F_{A,B,P})^{-1}.
\end{align*}
Observe that $C_\Lambda^A=\left(
                                 \begin{array}{cc}
                                   0 & I \\
                                 \end{array}
                               \right)
\left(
  \begin{array}{c}
    P_\Lambda^A \\
    C_\Lambda^A \\
  \end{array}
\right).$  Since $\bigg(A^I,B^I,\left(
  \begin{array}{c}
    P_\Lambda^A \\
    C_\Lambda^A \\
  \end{array}
\right)\bigg)$ is a regular linear system, $(A^I,B^I,    C_\Lambda^A)$ is also a regular linear system.
\end{proof}
\begin{remark}
In the special case that $B=I$, the above theorem says that
both $P$ and $C$ being admissible for $A$ implies that
$C$ is admissible for $A+P$. Such result has been proved by Hadd \cite{Hadd2006}.
This means that our result is a generalization of \cite{Hadd2006}.
\end{remark}

\begin{theorem}\label{fedin}
Assume that the boundary system $(L,G,Q)$ is a regular linear system
generated by $(\mathbb{A},\mathbb{B},\mathbb{P})$ on $(X,U,U)$ with
admissible feedback operator $I$ and $(L,G,K)$ is a regular linear
system on $(X,U,Y)$. Then the system
\begin{align*}
  \left\{
   \begin{array}{ll}
   \dot{z}(t)&=Lz(t)\\
     Gz(t)&=Qz(t)+v(t)\\
     y(t)&=Kz(t)
   \end{array}
 \right.
\end{align*}
is a regular linear system generated by
$(\mathbb{A}^I,\mathbb{B}^I,K,\bar{K}).$
\end{theorem}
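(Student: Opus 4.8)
\ \
The idea is to view the system of the theorem as the boundary control system $\{\dot z=Lz,\ G_{\mathrm{new}}z(t)=v(t),\ y(t)=Kz(t)\}$ with the new boundary operator $G_{\mathrm{new}}:=G-Q$ on $D(L)$, and then to use Lemma \ref{output} in both directions --- once to linearise the output of the original boundary system $(L,G,K)$, and once at the end to convert a regularity statement about the abstract system $(\mathbb{A}^I,\mathbb{B}^I,\cdot)$ into the desired statement about $(L,G_{\mathrm{new}},K)$. The core regularity input is two applications of Lemma \ref{cross}.

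First I would collect the auxiliary facts. Since $(L,G,Q)$ is regular with generator $(\mathbb{A},\mathbb{B},\mathbb{P})$ (so its feedthrough is $0$) and admissible feedback operator $I$, Lemma \ref{boundarycontrol1} shows that $\{\dot z=Lz,\ G_{\mathrm{new}}z=v\}$ is an abstract linear control system generated by $(\mathbb{A}^I,\mathbb{B}^I)$ with $\mathbb{A}^I=(\mathbb{A}_{-1}+\mathbb{B}\mathbb{P}_\Lambda^{\mathbb{A}})|_X$ and $\mathbb{B}^I=J^{\mathbb{A},\mathbb{A}^I}\mathbb{B}$; by Lemma \ref{pertabation}, $D(\mathbb{A}^I)=\{z\in D(L):Gz=Qz\}=\ker G_{\mathrm{new}}$, so $(L,G_{\mathrm{new}})$ is indeed a boundary control system. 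Applying Lemma \ref{output} to $(L,G,K)$ gives that $(\mathbb{A},\mathbb{B},K|_{D(\mathbb{A})})$ is a regular linear system and $Kz=K_\Lambda^{\mathbb{A}}z+\bar K\,Gz$ for $z\in D(L)$, and applying it to $(L,G,Q)$ gives $Qz=\mathbb{P}_\Lambda^{\mathbb{A}}z$ for $z\in D(L)$; regularity of $(L,G,K)$ and $(L,G,Q)$ also yields $D(\mathbb{A}^I)\subset D(L)\subset D(K_\Lambda^{\mathbb{A}})\cap D(\mathbb{P}_\Lambda^{\mathbb{A}})$.

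Next I would identify and handle the observation operator. For $z\in D(\mathbb{A}^I)$ we have $Gz=Qz=\mathbb{P}_\Lambda^{\mathbb{A}}z$, so
\[
K|_{D(\mathbb{A}^I)}=(K_\Lambda^{\mathbb{A}}+\bar K\,\mathbb{P}_\Lambda^{\mathbb{A}})|_{D(\mathbb{A}^I)}.
\]
By Lemma \ref{cross} with $(A,B,P)=(\mathbb{A},\mathbb{B},\mathbb{P})$ and $C=K|_{D(\mathbb{A})}$, the triple $(\mathbb{A}^I,\mathbb{B}^I,K_\Lambda^{\mathbb{A}})$ is a regular linear system; by Lemma \ref{cross} again with $C=\mathbb{P}$, so is $(\mathbb{A}^I,\mathbb{B}^I,\mathbb{P}_\Lambda^{\mathbb{A}})$, hence also $(\mathbb{A}^I,\mathbb{B}^I,\bar K\,\mathbb{P}_\Lambda^{\mathbb{A}})$ by \textbf{(S2)}. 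Since the admissible observation operators of $\mathbb{A}^I$ form a vector space and the sum of the observation operators of two regular linear systems sharing the same $\mathbb{A}^I$ and $\mathbb{B}^I$ is again the observation operator of a regular linear system (output maps, feedthroughs and input-output maps add), the displayed identity shows that $(\mathbb{A}^I,\mathbb{B}^I,K|_{D(\mathbb{A}^I)})$ is a regular linear system.

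Finally I would return to the boundary picture: Lemma \ref{output} applied to $(L,G_{\mathrm{new}},K)$ --- whose underlying boundary control system is generated by $(\mathbb{A}^I,\mathbb{B}^I)$ --- gives that $(L,G_{\mathrm{new}},K)$ is a regular linear system with system operator $\mathbb{A}^I$, control operator $\mathbb{B}^I$ and observation operator $K|_{D(\mathbb{A}^I)}$. For its feedthrough, writing the solution of $(\lambda-L)z=0$, $G_{\mathrm{new}}z=v$ as $z=D_{\lambda,L,G}u$ yields $v=(I-QD_{\lambda,L,G})u$, hence $D_{\lambda,L,G_{\mathrm{new}}}=D_{\lambda,L,G}(I-QD_{\lambda,L,G})^{-1}$ (the inverse exists for $\lambda$ large since $QD_{\lambda,L,G}\to 0$), so the transfer function $KD_{\lambda,L,G_{\mathrm{new}}}=(KD_{\lambda,L,G})(I-QD_{\lambda,L,G})^{-1}$ tends to $\bar K$ as $\lambda\to+\infty$. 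Thus the system of the theorem is regular with generator $(\mathbb{A}^I,\mathbb{B}^I,K,\bar K)$. I expect the main obstacle to lie in the bookkeeping for the two passages through Lemma \ref{output}: one must verify carefully that $(L,G_{\mathrm{new}})$ is a genuine boundary control system with $\ker G_{\mathrm{new}}=D(\mathbb{A}^I)$, and that $D(\mathbb{A}^I)\subset D(K_\Lambda^{\mathbb{A}})\cap D(\mathbb{P}_\Lambda^{\mathbb{A}})$, so that the identity for $K|_{D(\mathbb{A}^I)}$ and both invocations of Lemma \ref{cross} are fully legitimate.
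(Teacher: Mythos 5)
Your proposal is correct and follows essentially the same route as the paper's proof: both decompose $K$ on $D(\mathbb{A}^I)$ as $K_\Lambda^{\mathbb{A}}+\bar K\,\mathbb{P}_\Lambda^{\mathbb{A}}$ via Lemma \ref{output} (using that the feedthrough of $(L,G,Q)$ vanishes), invoke Lemma \ref{cross} to transfer regularity to the closed-loop pair $(\mathbb{A}^I,\mathbb{B}^I)$, sum the two regular observations, and identify the feedthrough as the limit of $KR(\lambda,\mathbb{A}_{-1})\mathbb{B}\bigl[I-\mathbb{P}_\Lambda^{\mathbb{A}}R(\lambda,\mathbb{A}_{-1})\mathbb{B}\bigr]^{-1}$, which is exactly the paper's computation written in the $D_{\lambda,L,G}$ notation. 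The only cosmetic difference is that you make explicit (via a second application of Lemma \ref{cross}) the regularity of $(\mathbb{A}^I,\mathbb{B}^I,\mathbb{P}_\Lambda^{\mathbb{A}})$, which the paper merely "observes".
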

\begin{proof}\ \
It follows from Lemma \ref{output} that
\begin{align*}
    Kz=K_\Lambda^Az+\bar{K}Gz,\ z\in Z.
\end{align*}
For any $z\in D(A^I)\subset D(L)$, $Gz=Qz.$
Since $(L,G,Q)$ is a regular linear system generated by $(\mathbb{A},\mathbb{B},\mathbb{P})$, we have the equality
$Kz=K_\Lambda^Az,\ z\in D(L).$
Then
\begin{align*}
    Kz=K_\Lambda^Az+\bar{K}Q_\Lambda^Az,\ z\in D(A^I).
\end{align*}
Observe that $(A^I,B^I,Q_\Lambda^A)$ is a regular linear system. By the Lemma \ref{cross}, $(A^I,B^I,K_\Lambda^A)$ generates a regular linear system. By virtue of \textbf{(S2)}, $(A^I,B^I,K)$ is a regular linea system. Combine Lemma \ref{output} and Lemma \ref{boundarycontrol1} to get that the boundary system is a regular linear system.
We compute the feedthrough operator
$$Dz=\lim_{\lambda\rightarrow +\infty}KR(\lambda,A^I_{-1})B^Iz=\lim_{\lambda\rightarrow +\infty}KR(\lambda,A_{-1})B[I-P_\Lambda^AR(\lambda,A_{-1})B]^{-1}z
=\bar{K}z.$$
This completes the proof.
\end{proof}
\begin{remark}
Let $K=Q$. Then the regular linear system is just the closed loop system of $(\mathbb{A},\mathbb{B},\mathbb{P})$ with
admissible feedback operator $I$.
\end{remark}

\begin{theorem}\label{uninput}
Assume that the boundary systems $(\mathfrak{A}_m,\mathfrak{G},K)$,
$(\mathfrak{A}_m,\mathfrak{G},L)$ and $(A_m,P,M)$ are regular linear
systems. Then system ($DLS$) is a regular linear system generated by
$$\bigg(\left(
                            % [inline block 1: 102 envs, 23780 chars -> data_tex | \begin{array}{cc}                               \mathfrak{A} & 0 \\...]

                          \right)q(t)
, & \hbox{ } t\geq 0.
    \end{array}
  \right.
\end{align*}

\begin{theorem}\label{input}
Assume that the boundary systems $(\mathfrak{A}_m,\mathfrak{G},K)$,
$(\mathfrak{A}_m,\mathfrak{G},L)$, $(A,P,M)$,
$(\mathrm{A}_m,\mathrm{G},E)$ and $(\mathrm{A}_m,\mathrm{G},H)$ are
regular linear systems. Then system $(DLS1)$ is a regular linear
system. Moreover, the state has the following mild expression
$$w(t)=T_A(t)w+\int_0^tT_A(t-s)[L_\Lambda w_s+\bar{L}w(s)+Eq_s+\bar{E}q(s)]ds+\Phi_{A,B}(t)v.$$
\end{theorem}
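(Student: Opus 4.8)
The plan is to repeat the argument of Theorem \ref{uninput}, the only new feature being that the input delay $q_t$ is handled as a second delay state and merged with the state delay $w_t$ into one ``double'' delay block. Put $z(t)=q_t$, so that $z$ solves $\dot z(t)=\mathrm{A}_m z(t)$, $\mathrm{G}z(t)=q(t)$ — an abstract linear control system generated by $(\mathrm{A},\mathrm{B})$ — and $(DLS1)$ turns into the boundary control system with state $(x,z,w)\in\mathfrak{X}\times\mathrm{X}\times X$ already displayed above. Set $\mathbb{A}_m=\left(\begin{smallmatrix}\mathfrak{A}_m & 0\\ 0 & \mathrm{A}_m\end{smallmatrix}\right)$, $\mathbb{G}=\left(\begin{smallmatrix}\mathfrak{G} & 0\\ 0 & \mathrm{G}\end{smallmatrix}\right)$, $\mathbb{L}=(L\ \ E)$ and $\mathbb{K}=(K\ \ H)$; then the coupled operator, the boundary operator and the output of the converted system have exactly the shape treated in Theorem \ref{exp}, with the roles of $(\mathfrak{A}_m,\mathfrak{G},K)$ and $(\mathfrak{A}_m,\mathfrak{G},L)$ played by $(\mathbb{A}_m,\mathbb{G},\mathbb{K})$ and $(\mathbb{A}_m,\mathbb{G},\mathbb{L})$, and that of $(A_m,P,M)$ unchanged.

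First I would check that $(\mathbb{A}_m,\mathbb{G},\mathbb{L})$ and $(\mathbb{A}_m,\mathbb{G},\mathbb{K})$ are regular linear systems. Since $\mathbb{A}_m$ and $\mathbb{G}$ are block diagonal, $(\mathbb{A}_m,\mathbb{G},\mathbb{L})$ decouples on the input side into the independent systems $(\mathfrak{A}_m,\mathfrak{G},L)$ and $(\mathrm{A}_m,\mathrm{G},E)$ whose outputs are then added; its semigroup, admissible control operator and admissible observation operator are the corresponding block diagonal operators, its transfer function is $\big(G_{\mathfrak{A},\mathfrak{B},L}(\lambda)\ \ G_{\mathrm{A},\mathrm{B},E}(\lambda)\big)$, and the limit as $\lambda\to+\infty$ exists because both summands are regular by hypothesis; hence $(\mathbb{A}_m,\mathbb{G},\mathbb{L})$ is regular with feedthrough $\bar{\mathbb{L}}=(\bar L\ \ \bar E)$. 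The same argument with $(\mathfrak{A}_m,\mathfrak{G},K)$ and $(\mathrm{A}_m,\mathrm{G},H)$ gives that $(\mathbb{A}_m,\mathbb{G},\mathbb{K})$ is regular with feedthrough $\bar{\mathbb{K}}=(\bar K\ \ \bar H)$. Together with the regularity of $(A_m,P,M)$, Theorem \ref{exp} then shows that the ``pre-feedback'' boundary system — obtained from the converted system by turning its whole boundary right-hand side into a free input $u=(u_1,u_2,u_3)$ and adjoining the feedback row $(0\ 0\ I)$ to the output — is a regular linear system whose state is $x(t)=T_{\mathfrak{A}}(t)x+\Phi_{\mathfrak{A},\mathfrak{B}}(t)u_1$, $z(t)=T_{\mathrm{A}}(t)z+\Phi_{\mathrm{A},\mathrm{B}}(t)u_2$ and
$$w(t)=T_A(t)w+\int_0^tT_A(t-s)\big[L_\Lambda x(s)+E_\Lambda z(s)+\bar L\,u_1(s)+\bar E\,u_2(s)\big]\,ds+\Phi_{A,B}(t)u_3 .$$

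Next I would close the loop $\mathfrak{G}x(t)=w(t)$, keeping $q$ and $v$ free. The feedback output involved here, $Q:(x,z,w)\mapsto(w,0,0)$, is a \emph{bounded} operator on $\mathfrak{X}\times\mathrm{X}\times X$, so by \textbf{(S5)} and \textbf{(S4)} the pre-feedback system with this output is regular with admissible feedback operator $I$ and zero feedthrough; Lemma \ref{boundarycontrol1} then yields that imposing $\mathfrak{G}x(t)=w(t)$, $\mathrm{G}z(t)=q(t)$, $Pw(t)=v(t)$ produces an abstract linear control system generated by $(\mathbb{A}^I,\mathbb{B}^I)$, $\mathbb{A}$ and $\mathbb{B}$ being the generator and control operator of the pre-feedback system, and the ``delay state $=$ history'' identity forces $x(t)=w_t$, $z(t)=q_t$. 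Because the pre-feedback system also has, by \textbf{(S2)}, a regular realization with output $(K\ H\ M)$, Theorem \ref{fedin} applies and shows $(DLS1)$ is a regular linear system generated by $\big(\mathbb{A}^I,\mathbb{B}^I,(K\ H\ M),\overline{(K\ H\ M)}\big)$; as in the proof of Theorem \ref{uninput} one computes $\overline{(K\ H\ M)}=\lim_{\lambda\to+\infty}(K\ H\ M)R(\lambda,\mathbb{A}^I_{-1})\mathbb{B}^I=\bar M$, since $v$ reaches $w$ only through $R(\lambda,A_{-1})B$. Substituting $u_1=w$, $u_2=q$, $u_3=v$ (hence $x(s)=w_s$, $z(s)=q_s$) in the formula for $w(t)$ above gives the asserted mild expression, with $E_\Lambda q_s$ written there simply as $Eq_s$.

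The main obstacle is twofold. One has to verify, through careful block-operator bookkeeping, that the merged triples $(\mathbb{A}_m,\mathbb{G},\mathbb{L})$ and $(\mathbb{A}_m,\mathbb{G},\mathbb{K})$ meet the hypotheses of Theorem \ref{exp} \emph{verbatim}; and one has to match the \emph{partial} feedback structure of $(DLS1)$ — only the $\mathfrak{G}$-boundary is fed back from the output, while the $\mathrm{G}$- and $P$-boundaries carry the external inputs $q$ and $v$ — with the hypotheses of Lemma \ref{boundarycontrol1} and Theorem \ref{fedin}, keeping track of which terms of the mild expression carry $\Lambda$-extensions ($L_\Lambda w_s$, $E_\Lambda q_s$) and which carry feedthroughs ($\bar Lw(s)$, $\bar Eq(s)$). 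Everything else — in particular the feedthrough computation $\overline{(K\ H\ M)}=\bar M$ and the identification of $\mathbb{A}^I$ and $\mathbb{B}^I$ — is routine, if lengthy.
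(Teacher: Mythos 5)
Your proposal is correct and follows essentially the same route as the paper: merge the two delay blocks into $\mathbf{A}_m=\mathrm{diag}(\mathfrak{A}_m,\mathrm{A}_m)$, $\mathbf{G}=\mathrm{diag}(\mathfrak{G},\mathrm{G})$, $\mathbf{L}=(L\ \ E)$, $\mathbf{K}=(K\ \ H)$, apply Theorem \ref{exp} to the pre-feedback system, close the loop $\mathfrak{G}x=w$ via Lemma \ref{boundarycontrol1} and Theorem \ref{fedin}, and restrict the input by \textbf{(S2)}. Your explicit verification that the merged triples $(\mathbf{A}_m,\mathbf{G},\mathbf{L})$ and $(\mathbf{A}_m,\mathbf{G},\mathbf{K})$ are regular with feedthroughs $(\bar L\ \ \bar E)$ and $(\bar K\ \ \bar H)$ is a welcome detail that the paper passes over with ``it is easy to obtain''.
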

\begin{proof}\ \
Denote $\mathbf{A}_m=\left(
                        \begin{array}{cc}
                          \mathfrak{A}_m & 0 \\
                          0 & \mathrm{A}_m \\
                        \end{array}
                      \right),
$ $\mathbf{G}=\left(
                 \begin{array}{cc}
                   \mathfrak{G} & 0 \\
                   0 & \mathrm{G} \\
                 \end{array}
               \right),$  $\mathbf{L}=\left(
                                        \begin{array}{cc}
                                          L & E \\
                                        \end{array}
                                      \right),$ $
\mathbf{K}=\left(
              \begin{array}{cc}
                K & H \\
              \end{array}
            \right)
.$ Denote by $\mathbf{A}$ the restriction of
$\mathbf{A}_m$ on $Ker\{\mathbf{G}\}$.
Then $\mathbf{A}=\left(
                        \begin{array}{cc}
                          \mathfrak{A} & 0 \\
                          0 & \mathrm{A} \\
                        \end{array}
                      \right)
$ and the control operator of boundary control system $(\mathbf{A}_m,\mathbf{G})$ is $\mathbf{B}=\left(
                        \begin{array}{cc}
                          \mathfrak{B} & 0 \\
                          0 & \mathrm{B} \\
                        \end{array}
                      \right)$.
We can easy to obtain that $(\mathbf{A}_m,\mathbf{G},\mathbf{K})$ and $(\mathbf{A}_m,\mathbf{G},\mathbf{L})$ are regular linear systems. Theorem 3.7 implies that
\begin{align*}
\left\{
    \begin{array}{ll}
      \frac{d}{dt}\left(
                    \begin{array}{c}
                      x(t) \\
                      z(t) \\
                      w(t) \\
                    \end{array}
                  \right)=\left(
                            \begin{array}{ccc}
                              \mathfrak{A}_m & 0 & 0 \\
                              0 & \mathrm{A}_m & 0\\
                              L & E & A_m \\
                            \end{array}
                          \right)
             \left(
                    \begin{array}{c}
                      x(t) \\
                      z(t) \\
                      w(t) \\
                    \end{array}
                  \right)
      , & \hbox{ } t\geq 0\\
      \left(
        \begin{array}{ccc}
          \mathfrak{G} & 0 & 0 \\
          0 & \mathrm{G} & 0 \\
          0 & 0 & P \\
        \end{array}
      \right)
      \left(
                    \begin{array}{c}
                      x(t) \\
                      z(t)\\
                      w(t) \\
                    \end{array}
                  \right)
      =u(t) & \hbox{ } t\geq 0.
    \end{array}
  \right.
\end{align*}
is an abstract linear control system with generator
$$\bigg(\left(
                            \begin{array}{cc}
                              \mathbf{A} & 0 \\
                              \mathbf{L} & A \\
                            \end{array}
                          \right),
J^{\left(
                            \begin{array}{cc}
                              \mathbf{A} & 0 \\
                              0 & A \\
                            \end{array}
                          \right),
\left(
                            \begin{array}{cc}
                              \mathbf{A} & 0 \\
                              \mathbf{L} & A \\
                            \end{array}
                          \right)}\left(
                            \begin{array}{cc}
                              \mathbf{B} & 0 \\
                              0 & B \\
                            \end{array}
                          \right)+\left(
           \begin{array}{cc}
             0 & 0 \\
             \bar{\mathbf{L}} & 0 \\
           \end{array}
         \right)\bigg).$$
Since operator
$\left(
         \begin{array}{ccc}
           0 & 0 & I \\
           0 & 0 & 0 \\
           0 & 0 & 0 \\
         \end{array}
       \right)$ is bounded, it follows by \textbf{(S5)} that
boundary system 
$$\bigg(\left(
                            \begin{array}{ccc}
                              \mathfrak{A}_m & 0 & 0 \\
                              0 & \mathrm{A}_m & 0\\
                              L & E & A_m \\
                            \end{array}
                          \right),
\left(
                            \begin{array}{ccc}
                              \mathfrak{G} & 0 & 0 \\
                              0 & \mathrm{G} & 0\\
                              0 & 0 & P \\
                            \end{array}
                          \right),
\left(
                            \begin{array}{ccc}
                              0 & 0 & I \\
                              0 & 0 & 0\\
                              0 & 0 & 0 \\
                            \end{array}
                          \right)\bigg)$$ 
is a regular linear system operated by 
$$\bigg(\left(
                            \begin{array}{cc}
                              \mathbf{A} & 0 \\
                              \mathbf{L} & A \\
                            \end{array}
                          \right),
J^{\left(
                            \begin{array}{cc}
                              \mathbf{A} & 0 \\
                              0 & A \\
                            \end{array}
                          \right),
\left(
                            \begin{array}{cc}
                              \mathbf{A} & 0 \\
                              \mathbf{L} & A \\
                            \end{array}
                          \right)}\left(
                            \begin{array}{cc}
                              \mathbf{B} & 0 \\
                              0 & B \\
                            \end{array}
                          \right)+\left(
           \begin{array}{cc}
             0 & 0 \\
             \bar{\mathbf{L}} & 0 \\
           \end{array}
         \right),\left(
         \begin{array}{ccc}
           0 & 0 & I \\
           0 & 0 & 0 \\
           0 & 0 & 0 \\
         \end{array}
       \right)\bigg).$$
and $I$ is its admissible feedback operator.
By the proof of Theorem \ref{uninput}, it follows that
$$\bigg(\left(
                            \begin{array}{cc}
                              \mathbf{A} & 0 \\
                              \mathbf{L} & A \\
                            \end{array}
                          \right),
J^{\left(
                            \begin{array}{cc}
                              \mathbf{A} & 0 \\
                              0 & A \\
                            \end{array}
                          \right),
\left(
                            \begin{array}{cc}
                              \mathbf{A} & 0 \\
                              \mathbf{L} & A \\
                            \end{array}
                          \right)}\left(
                            \begin{array}{cc}
                              \mathbf{B} & 0 \\
                              0 & B \\
                            \end{array}
                          \right)+\left(
           \begin{array}{cc}
             0 & 0 \\
             \bar{\mathbf{L}} & 0 \\
           \end{array}
         \right),\left(
                   \begin{array}{cc}
                     \mathbf{K} & M \\
                   \end{array}
                 \right)
\bigg)$$ is a regular linear system. Combine Lemma \ref{output} and
Theorem \ref{fedin} to get that
\begin{align*} \left\{
    \begin{array}{lll}
      \frac{d}{dt}\left(
                    \begin{array}{c}
                      x(t) \\
                      z(t) \\
                      w(t) \\
                    \end{array}
                  \right)=\left(
                            \begin{array}{ccc}
                              \mathfrak{A}_m & 0 & 0 \\
                              0 & \mathrm{A}_m & 0\\
                              L & E & A_m \\
                            \end{array}
                          \right)
             \left(
                    \begin{array}{c}
                      x(t) \\
                      z(t) \\
                      w(t) \\
                    \end{array}
                  \right)
      , & \hbox{ } t\geq 0\\
      \left(
        \begin{array}{ccc}
          \mathfrak{G} & 0 & 0 \\
          0 & \mathrm{G} & 0 \\
          0 & 0 & P \\
        \end{array}
      \right)
      \left(
                    \begin{array}{c}
                      x(t) \\
                      z(t)\\
                      w(t) \\
                    \end{array}
                  \right)
      =\left(
         \begin{array}{ccc}
           0 & 0 & I \\
           0 & 0 & 0\\
           0 & 0 & 0 \\
         \end{array}
       \right)
      \left(
                    \begin{array}{c}
                      x(t) \\
                      z(t)\\
                      w(t) \\
                    \end{array}
                  \right)+u(t) & \hbox{ } t\geq 0\\
      y(t)=\left(
             \begin{array}{ccc}
               K & H & M \\
             \end{array}
           \right)\left(
                    \begin{array}{c}
                      x(t) \\
                      z(t)\\
                      w(t) \\
                    \end{array}
                  \right)
       & \hbox{ } t\geq 0.
    \end{array}
  \right.
\end{align*}
is a regular linear system. By \textbf{(S2)}, the boundedness of
operator $\left(
                            \begin{array}{cc}
                              0 & 0 \\
                              I & 0 \\
                              0 & I \\
                            \end{array}
                          \right)$ implies that
system $(DLS1)$ is a regular linear system.
\end{proof}

\begin{theorem}\label{fe}
Assume that the boundary systems $(\mathfrak{A}_m,\mathfrak{G},K)$,
$(\mathfrak{A}_m,\mathfrak{G},L)$, $(\mathrm{A}_m,\mathrm{G},E)$ and
$(\mathrm{A}_m,\mathrm{G},H)$ are regular linear systems. Suppose
the boundary system $(A_m,P,M)$ to be a regularity linear system
with admissible feedback operator $I$. Then system $(BFS)$ is an
abstract linear control system.
\end{theorem}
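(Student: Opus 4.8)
The plan is to realize $(BFS)$ as a particular output feedback of the system $(DLS1)$ and then appeal to the abstract feedback theory. Comparing the two equations, one sees at once that $(BFS)$ is obtained from $(DLS1)$ by imposing $v(t)=y(t)$: thus, in the converted control-system form, $(BFS)$ coincides with the closed loop $(DLS1)^{\Gamma}$ of $(DLS1)$ under the bounded feedback operator $\Gamma\in L(Y,U\times V)$, $\Gamma y=(0,y)$ (the output is fed back only into the $v$-channel, while the delayed-input channel $q$ is left untouched), with the external $v$-input frozen at $0$. So the first step is to make this identification precise, tracking which block of the input space of the converted system carries $q$ and which carries $v$.

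For the second step I would invoke the results of Sections 4--5. By Theorem \ref{input}, the present hypotheses (which are exactly those of Theorem \ref{input} together with admissibility of $I$ for $(A_m,P,M)$) guarantee that $(DLS1)$ is a regular linear system; denote its generating operators by $(\mathcal{A},\mathcal{B},\mathcal{C},\mathcal{D})$. To apply Theorem \ref{relation} I must check that $\Gamma$ is an admissible feedback operator for $(DLS1)$, i.e.\ that $I-F_{(DLS1)}(t)\Gamma$ is boundedly invertible on $L^p([0,t],Y)$ for each $t\geq 0$. Now $F_{(DLS1)}(t)\Gamma$ is precisely the $v\mapsto y$ component of the input-output map of $(DLS1)$, and setting the delayed input $q\equiv 0$ reduces $(DLS1)$ to the system $(DLS)$ of Section 4; hence $F_{(DLS1)}(t)\Gamma=F_{(DLS)}(t)$. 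By Theorem \ref{uninput} --- all of whose hypotheses hold here --- $(DLS)$ is a regular linear system admitting $I$ as an admissible feedback operator, so $I-F_{(DLS)}(t)$ is invertible for every $t\geq 0$, and therefore $\Gamma$ is admissible for $(DLS1)$.

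The third step is then immediate: Theorem \ref{relation} shows that $(DLS1)^{\Gamma}$ is a well-posed linear system generated by $(\mathcal{A}^{\Gamma},\mathcal{B}^{\Gamma},\mathcal{C}^{\Gamma})$, and in particular $(T^{\Gamma},\Phi^{\Gamma})$ is an abstract linear control system. Freezing the external $v$-input at $0$, i.e.\ precomposing $\Phi^{\Gamma}$ with the bounded injection $U\to U\times V$, $q\mapsto(q,0)$, again yields an abstract linear control system (this is immediate from the cocycle identity defining abstract linear control systems). That restricted system is exactly the converted form of $(BFS)$, so $(BFS)$ is an abstract linear control system.

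The main obstacle I anticipate is the bookkeeping in the second step: one must verify, at the level of the abstract data produced by Theorem \ref{input} for $(DLS1)$ and by Theorem \ref{uninput} for $(DLS)$, that zeroing the $q$-channel of $(DLS1)$ reproduces $(DLS)$ on the nose --- i.e.\ that the relevant blocks of $\mathcal{B}$, $\mathcal{C}$ and of the input-output map of $(DLS1)$ agree with the corresponding data of $(DLS)$, and that $\Gamma$ as defined selects precisely that block. A more hands-on alternative would avoid this by repeating the argument of Theorem \ref{feedback} directly on the combined boundary control system for $(BFS)$: form the $3\times3$ block input-output map, note that its middle ($q_t$-history) row is $(0,I,0)$ so that invertibility of $I-F(t)$ collapses to a $2\times2$ Schur complement in the $(x,w)$-channels, and dispatch that block as in Theorem \ref{feedback} using Lemma \ref{invert}, \textbf{(S3)}, and the admissibility of $I$ for $(A_m,P,M,\bar M)$.
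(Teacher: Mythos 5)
Your proposal is correct in outline, and your \emph{main} route is genuinely different from the paper's, while the ``hands-on alternative'' you sketch at the end is essentially what the paper actually does: it forms the combined three-component boundary system (\ref{dabian}) with feedback output operator whose middle row is zero, computes the full block input--output map via the proofs of Theorems \ref{exp} and \ref{feedback}, checks invertibility of $I-F(t)$ for small $t$ by the same Schur-complement argument (Lemma \ref{invert}, \textbf{(S3)}, admissibility of $I$ for $(A_m,P,M,\bar M)$), and concludes with Lemma \ref{boundarycontrol1}. (A small slip in your sketch: it is the middle row of $I-F(t)$, not of $F(t)$, that equals $(0,I,0)$.) Your primary argument --- realizing $(BFS)$ as the partial output feedback $\Gamma y=(0,y)$ of the already-regular system $(DLS1)$ and reducing admissibility of $\Gamma$ to $F_{(DLS1)}(t)\Gamma=F_{(DLS)}(t)$ together with Theorem \ref{uninput} --- is a legitimate and arguably cleaner packaging: the Schur-complement work is not redone but inherited from the proof of Theorem \ref{uninput}. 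Two cautions are in order. First, the admissibility clause in the conclusion of Theorem \ref{uninput} is justified in its proof only by using that $I$ is an admissible feedback operator for $(A_m,P,M,\bar M)$, a hypothesis absent from the statement of Theorem \ref{uninput} but present here; your citation is therefore safe under the present hypotheses, but you should say explicitly that this is where that extra hypothesis enters. Second, Theorem \ref{relation} only delivers an abstract well-posed closed-loop system on the product state space; to assert that the \emph{boundary} problem $(BFS)$ is an abstract linear control system you must still identify the closed-loop control operator (composed with the injection $q\mapsto(q,0)$) with the control operator of the boundary-value problem, which is precisely the content of Lemma \ref{boundarycontrol1}; the final step should be routed through that lemma rather than through Theorem \ref{relation} alone.
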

\begin{proof}\ \
Let $\mathbf{A}_m,\ \mathbf{G},\ \mathbf{K}$ and $\mathbf{L}$ be defined as in Theorem \ref{input}.
Theorem \ref{input} has proved that $\bigg(\left(
                            \begin{array}{cc}
                              \mathbf{A}_m & 0 \\
                              \mathbf{L} & A \\
                            \end{array}
                          \right),\left(
                            \begin{array}{cc}
                              \mathbf{G} & 0 \\
                              0 & P \\
                            \end{array}
                          \right)\bigg)$ is an abstract linear control system. Denote $\mathbb{A}=\left(
                            \begin{array}{cc}
                              \mathbf{A} & 0 \\
                              0 & A \\
                            \end{array}
                          \right)$,
$\mathbb{B}=\left(
                            \begin{array}{cc}
                              \mathbf{B} & 0 \\
                              0 & B \\
                            \end{array}
                          \right)$,
$\mathbb{C}=\left(
                            \begin{array}{cc}
                              0 & \left(
                    \begin{array}{c}
                      I \\
                      0 \\
                    \end{array}
                  \right) \\
                              \mathbf{K} & M \\
                            \end{array}
                          \right)$,
$\mathbb{P}=\left(
                            \begin{array}{cc}
                              0 & 0 \\
                              \mathbf{L} & 0 \\
                            \end{array}
                          \right),$ $\mathbb{M}=\left(
                       \begin{array}{cc}
                         0 & 0 \\
                         \bar{\mathbf{K}} & \bar{M} \\
                       \end{array}
                     \right).$
By the proof of Theorem \ref{exp}, it follows that
boundary system
\begin{align}\label{dabian}
\left\{
    \begin{array}{ll}
      \frac{d}{dt}\left(
                    \begin{array}{c}
                      x(t) \\
                      z(t) \\
                      w(t) \\
                    \end{array}
                  \right)=\left(
                            \begin{array}{ccc}
                              \mathfrak{A}_m & 0 & 0 \\
                              0 & \mathrm{A}_m & 0\\
                              L & E & A_m \\
                            \end{array}
                          \right)
             \left(
                    \begin{array}{c}
                      x(t) \\
                      z(t) \\
                      w(t) \\
                    \end{array}
                  \right)
      , & \hbox{ } t\geq 0\\
      \left(
        \begin{array}{ccc}
          \mathfrak{G }& 0 & 0 \\
          0 & \mathrm{G} & 0 \\
          0 & 0 & P \\
        \end{array}
      \right)
      \left(
                    \begin{array}{c}
                      x(t) \\
                      z(t)\\
                      w(t) \\
                    \end{array}
                  \right)
      =u(t), & \hbox{ } t\geq 0,\\
y(t)=\left(
         \begin{array}{ccc}
           0 & 0 & I \\
           0 & 0 & 0 \\
           K & H & M \\
         \end{array}
       \right)
        \left(
                    \begin{array}{c}
                      x(t) \\
                      z(t)\\
                      w(t) \\
                    \end{array}
                  \right), & \hbox{ } t\geq 0.
    \end{array}
  \right.
\end{align}
is a regular linear system generated by $(\mathbb{A}+\mathbb{P},J^{\mathbb{A},\mathbb{A}
+\mathbb{P}}\mathbb{B}+\bar{\mathbb{P}},\mathbb{C},
\mathbb{M})$.
Moreover, the proof of Theorem \ref{feedback} implies that
$$F_{\mathbb{A}+\mathbb{P},J^{\mathbb{A},\mathbb{A}
+\mathbb{P}}\mathbb{B}+\bar{\mathbb{P}},\mathbb{C},
\mathbb{M}}=\left(
               \begin{array}{cc}
                 \left(
                   \begin{array}{c}
                     F_{A,I,I}F_{\mathbf{A},\mathbf{B},L}+F_{A,\bar{\mathbf{L}},I} \\
                     0 \\
                   \end{array}
                 \right)
 &
\left(
  \begin{array}{c}
    F_{A,B,I} \\
    0 \\
  \end{array}
\right)
 \\
                 F_{A,I,M}F_{\mathbf{A},\mathbf{B},\mathbf{L}}+
F_{\mathbf{A},\mathbf{B},\mathbf{K},\bar{\mathbf{K}}}
+F_{A,\bar{\mathbf{L}},M}& F_{A,B,M,\bar{M}} \\
               \end{array}
             \right).$$
Then we can obtain that $I$ is an admissible feedback operator of
boundary system $\ref{dabian}$ through the standard proof as in Theorem \ref{feedback}. By Lemma \ref{boundarycontrol1}, system $(BFS)$ is an abstract linear control system. This completes that proof.
\end{proof}

\section{Application to Population Dynamical Systems}

In this section, we have two tasks: the first one is to
study the well-posedness and asymptotic behavior of population dynamical system with
bounded delayed birth process
\begin{eqnarray}\label{population}
% \nonumber to remove numbering (before each equation)
    \mbox{ }\left\{
                 \begin{array}{ll}
                   \frac{\partial w(t,a)}{\partial t}=-\frac{\partial w(t,a)}{\partial a}-\mu (a)w(t,a)-\alpha(a)w(t-r,a)\\
                   w(t,0)=\int_0^\infty\int_{-r}^0 \beta_1(\sigma,a)w(t+\sigma,a)d\sigma
                        da,t\geq 0 \\
                   w(s,a)=\phi(s,a),s\in[-r,0] \mbox{ and }a\geq 0,
                 \end{array}
               \right.
\end{eqnarray}
and unbounded delayed birth process
\begin{eqnarray}\label{population1}
% \nonumber to remove numbering (before each equation)
    \mbox{ }\left\{
                 \begin{array}{ll}
                   \frac{\partial w(t,a)}{\partial t}=-\frac{\partial w(t,a)}{\partial a}-\mu (a)w(t,a)-\alpha(a)w(t-r,a)\\
                   w(t,0)=\int_0^\infty \beta_2(a)w(t-r,a)da, t\geq 0 \\
                   w(s,a)=\phi(s,a),s\in[-r,0] \mbox{ and }a\geq 0;
                 \end{array}
               \right.
\end{eqnarray}
the second one
is to prove that population equations
with death caused by harvesting (depended on delay)
\begin{eqnarray}\label{population2}
% \nonumber to remove numbering (before each equation)
    \mbox{ }\left\{
                 \begin{array}{ll}
                   \frac{\partial w(t,a)}{\partial t}=-\frac{\partial w(t,a)}{\partial a}-\mu (a)w(t,a)-\alpha(a)w(t-r,a)-\eta (a)q(t-r,a)\\
                   w(t,0)=\int_0^\infty\int_{-r}^0 \beta_1(\sigma,a)w(t+\sigma,a)d\sigma
                        da,t\geq 0 \\
                   w(s,a)=\phi(s,a),s\in[-r,0] \mbox{ and }a\geq 0,
                 \end{array}
               \right.
\end{eqnarray}
and
\begin{eqnarray}\label{population3}
% \nonumber to remove numbering (before each equation)
    \mbox{ }\left\{
                 \begin{array}{ll}
                   \frac{\partial w(t,a)}{\partial t}=-\frac{\partial w(t,a)}{\partial a}-\mu (a)w(t,a)-\alpha(a)w(t-r,a)-\eta (a)q(t-r,a)\\
                   w(t,0)=\int_0^\infty \beta_2(a)w(t-r,a)da, t\geq 0 \\
                   w(s,a)=\phi(s,a),s\in[-r,0] \mbox{ and }a\geq 0
                 \end{array}
               \right.
\end{eqnarray}
 are abstract linear control systems.
Here $w(t; a)$ represents the density of the population of age a at
time $t$, $\mu\in L^\infty_{loc}(R^+)$ and $\alpha \in L^\infty_{loc}(R^+)$ are respectively the death rate caused by natural death and pregnancy, $\beta_1\in
L^\infty([-r,0]\times R^+)$ and $\beta_2\in
L^\infty(R^+)$ are the birth rates, $\eta$ is the
death rate caused by harvesting. Denote
\begin{eqnarray*}
% \nonumber to remove numbering (before each equation)
  \lim_{a\rightarrow \infty}\mu(a)=:\mu_\infty>0, \ \lim_{a\rightarrow \infty}\alpha(a)=:\alpha_\infty>0.
\end{eqnarray*}

We denote $X=U=V=L^1(R^+),$ $Y=R$. Then, systems (\ref{population}) and
(\ref{population1}) can be transformed to the form of system
(\ref{bdso}), and systems (\ref{population2}) and
(\ref{population3}) are transformed to the form of system
(\ref{bdsio}) with the operators:

$\bullet$ $A_m:=-\frac{d}{d\sigma}-\mu(\cdot)$ with domain
$D(A_m)=W^{1,1}(R^+)$;

$\bullet$ $(LF)(a)=-\alpha(a)F(-r,a), \forall F\in L^p([-r,0],L^1(R^+))$;

$\bullet$ $(Eh)(a)=-\eta(a)h(-r,a), \forall h\in L^p([-r,0],L^1(R^+))$;

$\bullet$ $M=0$;

$\bullet$ $Pf=f(0), \forall f\in L^1(R^+)$;

$\bullet$ $K_1g=\int_0^\infty \int_{-r}^0\beta_1(\sigma,a)g(\sigma,a)d\sigma da,\forall g\in
L^p([-r,0],L^1(R^+))$;

$\bullet$ $K_2g=\int_0^\infty \beta_2(a)g(-r,a)da,\forall g\in
L^p([-r,0],L^1(R^+))$;

$\bullet$ $H=0$.

Thus, $P\in L(W^{1,1}(R^+),\mathbb{C})$. The equations
\begin{eqnarray*}
% \nonumber to remove numbering (before each equation)
 w(t; 0)=:B(t)=K_1w_t, t\geq 0,
\end{eqnarray*}
and
 \begin{eqnarray*}
% \nonumber to remove numbering (before each equation)
 w(t; 0)=:B(t)=K_2w_t, t\geq 0,
\end{eqnarray*}
are the birth process, where $w_t:=w(t+\cdot)$ is the history
function. It has been shown in
 \cite[Proposition 2.1]{Greiner1984} that the spectrum
 $\sigma(A)$ is
 \begin{eqnarray*}
 % \nonumber to remove numbering (before each equation)
   \sigma(A)=\{\lambda\in
   \mathbb{C}:Re\lambda\leq-\mu_{\infty}\}.
 \end{eqnarray*}
Moreover, by \cite[(24)]{Greiner1984}, we have
\begin{eqnarray*}
% \nonumber to remove numbering (before each equation)
  Ker(\lambda-A_m)=\left\{
                                \begin{array}{ll}
                                  <e^{-\int_0^\cdot(\lambda+\mu(s))ds}>, & \hbox{ }Re>-\mu_\infty, \\
                                  0, & \hbox{  } otherwise.
                                \end{array}
                              \right.
\end{eqnarray*}
It is not hard to obtain that $R(\lambda,A_{-1})B=e^{-\int_0^\cdot(\lambda+\mu(s))ds}.$
\begin{theorem}\label{regu}\cite{Mei2015}
The pair $(A,B)$ is an abstract linear control system. The triple
$(\mathfrak{A},\mathfrak{B},K_1)$ and
$(\mathfrak{A},\mathfrak{B},K_2)$ generate regular linear systems.
\end{theorem}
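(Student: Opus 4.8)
The plan is to verify the three regularity claims of Theorem~\ref{regu} separately, using the explicit structure of the operators listed above together with the abstract machinery of Section~2.

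\medskip

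\noindent\textbf{Step 1: $(A,B)$ is an abstract linear control system.} First I would recall that $A = A_m|_{\ker P}$ generates a $C_0$-semigroup on $X = L^1(R^+)$ (this is the classical age-structured transport semigroup; its spectrum $\sigma(A)=\{\lambda:\mathrm{Re}\,\lambda\leq-\mu_\infty\}$ is quoted from \cite{Greiner1984}). The explicit Dirichlet map $D_{\lambda,A_m,P}u = u\cdot e^{-\int_0^\cdot(\lambda+\mu(s))ds}$ (so that $R(\lambda,A_{-1})B = e^{-\int_0^\cdot(\lambda+\mu(s))ds}$, as already noted) makes $B\in L(\mathbb{C},X_{-1})$ concrete. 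To show $B$ is an admissible control operator, i.e.\ that $\Phi_{A,B}(t)u = \int_0^t T_{-1}(t-s)Bu(s)\,ds$ maps $L^p(R^+,\mathbb{C})$ into $X$, I would identify $\Phi_{A,B}(t)u$ with the boundary-controlled solution of the transport equation $\partial_t w = -\partial_a w - \mu w$, $w(t,0)=u(t)$, which by the method of characteristics is $w(t,a)=u(t-a)e^{-\int_0^a\mu}$ for $a<t$ and $0$ otherwise; an elementary $L^1$ estimate then gives $\|\Phi_{A,B}(t)u\|_{L^1(R^+)}\le \|u\|_{L^1(0,t)}$, so $(A,B)$ is indeed an abstract linear control system. (Alternatively, one invokes that the boundary control system $(A_m,P)$ is well known to be an abstract linear control system, e.g.\ via \cite{Curtain1996,Staffans2005}.)

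\medskip

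\noindent\textbf{Step 2: $(\mathfrak{A},\mathfrak{B},K_i)$ generate regular linear systems.} By Lemma~\ref{output}, it suffices to show that $(\mathfrak{A},\mathfrak{B},K_i)$ generates a regular linear system, equivalently (since $(\mathfrak{A},\mathfrak{B})$ is the standard shift control system on $L^p([-r,0],X)$) that $K_i$ is an admissible observation operator for $\mathfrak A$ \emph{and} the feedthrough limit $\lim_{\lambda\to+\infty}K_i(\lambda-\mathfrak A_{-1})^{-1}\mathfrak B$ exists. Here $(\lambda-\mathfrak A_{-1})^{-1}\mathfrak B x = e_\lambda x$, i.e.\ the function $\theta\mapsto e^{\lambda\theta}x$. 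For $K_1$: compute
\begin{align*}
 K_1 e_\lambda x = \int_0^\infty\int_{-r}^0 \beta_1(\sigma,a)\,e^{\lambda\sigma}x(a)\,d\sigma\,da,
\end{align*}
and since $\beta_1\in L^\infty([-r,0]\times R^+)$, dominated convergence gives $K_1 e_\lambda x\to 0$ as $\lambda\to+\infty$, so $\bar K_1=0$. Admissibility of $K_1$ as an observation operator for the shift semigroup on $L^p([-r,0],X)$ follows because $(\Psi_{\mathfrak A,K_1}(t)g)(s) = K_1(T_{\mathfrak A}(s)g) = \int_0^\infty\int_{-r}^0\beta_1(\sigma,a)g(s+\sigma)(a)\,d\sigma\,da$ for $0\le s\le t$ (with the obvious truncation), and a change of variables plus Young's inequality bounds $\|\Psi_{\mathfrak A,K_1}(t)g\|_{L^p([0,t],\mathbb C)}$ by $c(t)\|g\|_{L^p([-r,0],X)}$. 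For $K_2$: here $K_2 e_\lambda x = e^{-\lambda r}\int_0^\infty\beta_2(a)x(a)\,da$, which \emph{also} tends to $0$ as $\lambda\to+\infty$ (since $r>0$), so again $\bar K_2=0$; and $K_2 w_t = \int_0^\infty\beta_2(a)w(t-r,a)\,da$ is a pure point delay, whose admissibility as an observation operator for the shift system is the standard fact that $g\mapsto (s\mapsto K_2 g(s-r))$ is bounded $L^p([-r,0],X)\to L^p_{loc}$. Hence $(\mathfrak A,\mathfrak B,K_i)$ is regular, and by Lemma~\ref{output} so is the boundary system $(\mathfrak A_m,\mathfrak G,K_i)$.

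\medskip

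\noindent\textbf{Main obstacle.} The only genuinely delicate point is the observation-admissibility estimate for $K_1$, the \emph{distributed}-delay birth functional: one must check that $\int_0^t|\!\int_0^\infty\!\int_{-r}^0\beta_1(\sigma,a)(T_{\mathfrak A}(s)g)(\sigma)(a)\,d\sigma\,da|^p ds$ is controlled by $\|g\|^p_{L^p([-r,0],L^1(R^+))}$ uniformly on compact $s$-intervals; this is where the $L^\infty$ bound on $\beta_1$ and a careful Fubini/Young argument enter. For $K_2$ and for Step~1 the verifications are essentially bookkeeping with the shift semigroup and the characteristics formula, so I would present those briefly and concentrate the writing on $K_1$. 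I expect the whole proof to reduce, in the end, to citing \cite{Mei2015} (where this theorem is attributed), so in the paper itself it is stated without a new proof.
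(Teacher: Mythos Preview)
Your closing expectation is exactly right: the paper does not prove this theorem at all. It is stated with the citation \cite{Mei2015} and used as a black box, so there is nothing to compare against.

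That said, one remark on your sketch: you have the difficulty backwards. The operator $K_1$ is \emph{bounded} on $\mathfrak X = L^p([-r,0],L^1(R^+))$, because $\beta_1\in L^\infty([-r,0]\times R^+)$ gives
\[
|K_1 g|\le \|\beta_1\|_\infty\int_{-r}^0\|g(\sigma,\cdot)\|_{L^1}\,d\sigma
\le \|\beta_1\|_\infty\,r^{1-1/p}\,\|g\|_{L^p([-r,0],L^1)}.
\]
Hence by \textbf{(S5)} the boundary system $(\mathfrak A_m,\mathfrak G,K_1)$ is automatically regular with zero feedthrough, and no Young/Fubini argument is needed. The case requiring work is $K_2$, which involves point evaluation $g\mapsto g(-r)$ and is genuinely unbounded on $\mathfrak X$; its admissibility for the shift semigroup is the standard (but non-trivial) point-delay fact. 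Your ``main obstacle'' paragraph should therefore be about $K_2$, not $K_1$.
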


\begin{lemma}\label{reg}
The triple $(\mathfrak{A},\mathfrak{B},L)$ generates a regular
linear system.
\end{lemma}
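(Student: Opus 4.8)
The plan is to reduce the claim to the boundary‑system picture of Section~2 and then to exploit the nilpotency of the shift semigroup. By the construction at the start of Section~3, $(\mathfrak{A}_m,\mathfrak{G})$ is an abstract linear control system generated by $(\mathfrak{A},\mathfrak{B})$; hence, by Lemma~\ref{output}, it suffices to prove that the boundary system $(\mathfrak{A}_m,\mathfrak{G},L)$ is a regular linear system, which is exactly the assertion that $(\mathfrak{A},\mathfrak{B},L)$ generates a regular linear system. I would first record the factorization $L=M_{-\alpha}\circ\delta_{-r}$, where $\delta_{-r}\colon W^{1,p}([-r,0],X)\to X$ is the trace $f\mapsto f(-r)$ and $M_{-\alpha}\in L(X)$ is multiplication by $-\alpha(\cdot)$; here $\delta_{-r}$ is bounded by the trace (Sobolev embedding) theorem, and $M_{-\alpha}$ is bounded because $\alpha\in L^\infty_{loc}(R^+)$ together with $\lim_{a\to\infty}\alpha(a)=\alpha_\infty$ forces $\alpha\in L^\infty(R^+)$.

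Next I would check that $L|_{D(\mathfrak{A})}$ is an admissible observation operator for $\mathfrak{A}$. Since $(T_{\mathfrak{A}}(t)f)(-r)=f(t-r)$ for $0\le t\le r$ and $T_{\mathfrak{A}}(t)=0$ for $t\ge r$, one has, for $f\in D(\mathfrak{A})$,
\begin{equation*}
\int_0^{r}\|L\,T_{\mathfrak{A}}(t)f\|_X^p\,dt=\int_{-r}^{0}\|{-\alpha}\,f(s)\|_X^p\,ds\le\|\alpha\|_\infty^p\,\|f\|_{\mathfrak{X}}^p ,
\end{equation*}
and by nilpotency this extends to every finite horizon, so $(\mathfrak{A},L)$ is an abstract linear observation system. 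The same nilpotency yields boundedness of the input–output map: writing $\Phi_{\mathfrak{A},\mathfrak{B}}(t)u$ for the mild state of the shift boundary system with input $u$ and zero initial value, one has $(\Phi_{\mathfrak{A},\mathfrak{B}}(t)u)(-r)=u(t-r)$ for $t\ge r$ and $0$ for $t<r$, so the extended input–output map is $(F_{\mathfrak{A},\mathfrak{B},L}(\infty)u)(t)=-\alpha(\cdot)\,u(t-r)\,\mathbf{1}_{\{t\ge r\}}$, which is visibly bounded on $L^p_{loc}(R^+,X)$. Hence $(\mathfrak{A},\mathfrak{B},L)$ is a well‑posed linear system.

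It remains to verify regularity, i.e. convergence of the transfer function. Since $R(\lambda,\mathfrak{A}_{-1})\mathfrak{B}=e_\lambda$ (the solution of $(\lambda-\mathfrak{A}_m)z=0$, $\mathfrak{G}z=I$), the transfer function of $(\mathfrak{A}_m,\mathfrak{G},L)$ is
\begin{equation*}
L e_\lambda=-\alpha(\cdot)\,(e_\lambda\,\cdot)(-r)=-e^{-\lambda r}\,M_{-\alpha}\quad\text{on }X,
\end{equation*}
with $\|Le_\lambda\|_{L(X)}\le e^{-\lambda r}\|\alpha\|_\infty\to0$ as $\lambda\to+\infty$. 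Thus the defining limit exists and the feedthrough operator is $\bar L=0$; by Lemma~\ref{output}, $(\mathfrak{A}_m,\mathfrak{G},L)$ — equivalently $(\mathfrak{A},\mathfrak{B},L)$ — is a regular linear system. (As an alternative to the direct verifications, one may note that $L$ has exactly the point‑delay structure of the operator $K_2$ handled in Theorem~\ref{regu} via \cite{Mei2015}, up to composition with the bounded operator $M_{-\alpha}$, so regularity also follows from that theorem together with \textbf{(S2)}.) The only mildly delicate items are the two routine identifications used above — trace boundedness of $\delta_{-r}$ and the explicit form of the shift‑system input–output map — and neither poses a genuine obstacle.
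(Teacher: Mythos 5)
Your argument is correct, and it is worth noting that the paper itself states Lemma \ref{reg} without any proof at all, so your write-up actually supplies detail the authors omit. The structure is the right one: factor $L=M_{-\alpha}\circ\delta_{-r}$ (and indeed $\alpha\in L^\infty_{loc}(R^+)$ with a finite limit at infinity gives $\alpha\in L^\infty(R^+)$, so $M_{-\alpha}\in L(X)$ on $X=L^1(R^+)$); use the nilpotent left-shift formula to get admissibility of the observation operator and the explicit input--output map $(Fu)(t)=-\alpha(\cdot)u(t-r)\mathbf{1}_{\{t\ge r\}}$; and read off regularity with $\bar L=0$ from $\|Le_\lambda\|\le e^{-\lambda r}\|\alpha\|_\infty\to 0$. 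This is exactly the mechanism behind the cited result that $(\mathfrak{A},\mathfrak{B},K_2)$ is regular (the shift-semigroup regular systems of Hadd--Idrissi--Rhandi), so your closing remark that the claim follows from the $\delta_{-r}$ case composed with a bounded operator via \textbf{(S2)} is the cleanest route. Two small points: the identity should read $Le_\lambda=e^{-\lambda r}M_{-\alpha}$ (your extra minus sign double-counts the one already inside $M_{-\alpha}$), which does not affect the norm estimate; and the evaluation $(\Phi_{\mathfrak{A},\mathfrak{B}}(t)u)(-r)$ is not defined pointwise for general $u\in L^p$, so strictly the formula for $F_{\mathfrak{A},\mathfrak{B},L}(\infty)$ must first be established for smooth compatible inputs (or via the $\Lambda$-extension) and then extended by the boundedness you exhibit --- you flag this as a routine identification, and it is, but it is the one step that should not be skipped entirely.
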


With Theorem \ref{main}, Lemma \ref{reg} and Theorem \ref{regu}, we
obtain the following theorem.

\begin{theorem}
The population dynamical system (\ref{population}) is well-posed.
\end{theorem}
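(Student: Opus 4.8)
The plan is to recognize system (\ref{population}) as the concrete instance of $(ABS)$ obtained from the data listed above — $A_m=-\frac{d}{d\sigma}-\mu(\cdot)$ on $W^{1,1}(R^+)$, $(LF)(a)=-\alpha(a)F(-r,a)$, $Pf=f(0)$, $M=0$, $K=K_1$ — and then to apply Theorem \ref{feedback}. With this data the first line of (\ref{population}) is $\dot{w}(t)=A_mw(t)+Lw_t$, and since $M=0$ the boundary condition $w(t,0)=\int_0^\infty\int_{-r}^0\beta_1(\sigma,a)w(t+\sigma,a)\,d\sigma\,da$ is exactly $Pw(t)=Mw(t)+K_1w_t$; thus (\ref{population}) is (\ref{bdso}) with $v(t)=y(t)$, i.e.\ the system $(ABS)$. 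It therefore suffices to verify the three hypotheses of Theorem \ref{feedback}.

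Those hypotheses are: $(\mathfrak{A}_m,\mathfrak{G},K_1)$ and $(\mathfrak{A}_m,\mathfrak{G},L)$ generate regular linear systems, and $(A_m,P,M)$ generates a regular linear system with admissible feedback operator $I$. The first is contained in Theorem \ref{regu}; the second is Lemma \ref{reg}. For the third, Theorem \ref{regu} also gives that $(A,B)$ is an abstract linear control system, and since $M=0$ is a bounded operator from $X$ to $Y$, statement \textbf{(S5)} shows that $(A_m,P,0)$ is a regular linear system generated by $(A,B,0)$ with admissible feedback operator $I$ (its input--output map vanishes identically, so $I-F(t)I=I$ is invertible for all $t\geq 0$). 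Hence all hypotheses hold.

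By Theorem \ref{feedback}, system $(ABS)$ is then well-posed; equivalently, by Lemma \ref{pertabation} the operator $\mathcal{A}_{L,K_1,0}$ generates a $C_0$-semigroup on $X\times\partial X$, and Theorem \ref{wel} now yields the conclusion in the required form: for every admissible initial datum there is a unique classical solution of (\ref{population}) depending continuously on the data. This is precisely the asserted well-posedness.

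I do not expect a genuine obstacle at this stage: the real analytic work sits in Lemma \ref{reg} (regularity of the unbounded delayed pregnancy-death operator $L$) and in the perturbation theory underlying Theorem \ref{feedback}. What remains is bookkeeping — checking that the PDE operators match the abstract data of (\ref{bdso}), that $M=0$ reduces the boundary output feedback to the birth law, and that \textbf{(S5)} indeed supplies the admissible feedback operator $I$ for the (here trivial) triple $(A_m,P,0)$. It is also worth recording that $M=0$ forces $MR(\lambda,A_{-1})B=0$, so that $1\in\rho(MR(\lambda,A_{-1})B)$ holds automatically; this would be the hypothesis needed if one further wished to combine the present result with the spectral descriptions of Theorems \ref{8} and \ref{yujie}.
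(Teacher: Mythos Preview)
Your proposal is correct and follows exactly the paper's approach: the paper simply states that the theorem follows from Theorem~\ref{feedback} (the paper writes ``Theorem~\ref{main}'', evidently a misprint for Theorem~\ref{feedback}) together with Lemma~\ref{reg} and Theorem~\ref{regu}, and you have spelled out precisely how those three results supply the hypotheses needed. The only addition is your explicit invocation of \textbf{(S5)} to handle the trivial case $M=0$, which the paper leaves implicit.
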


Observe that the operator $Ke_\lambda(I-R(\lambda,A)Le_\lambda)^{-1}R(\lambda,A_{-1})B$
has one-dimensional range, hence is compact. Thus, in Theorem \ref{yujie}, ``$\Longleftarrow$" can
be replaced by ``$\Longleftrightarrow$". On the other hand, observe
that $Re\lambda>-\mu_{\infty}$ implies $\lambda\in \rho(A)$.
Moreover, $Re\lambda>-\mu_{\infty}-\alpha_\infty$ implies $\lambda\in \rho(A+Le_\lambda).$
Therefore, we can obtain the following theorem.

\begin{theorem}\label{10}
Let $Re\lambda>-\mu_{\infty}$. Then
\begin{eqnarray*}
% \nonumber to remove numbering (before each equation)
&&\lambda\in \sigma(\mathcal {A}_{L,K,0})\\
&\Longleftrightarrow& 1\in
\sigma\big(R(\lambda,A)Le_\lambda+R(\lambda,A_{-1})B
Ke_\lambda\big)\\
&\Longleftrightarrow& 1\in
\sigma\big(Ke_\lambda(I-R(\lambda,A)Le_\lambda)^{-1}R(\lambda,A_{-1})B\big).
\end{eqnarray*}
\end{theorem}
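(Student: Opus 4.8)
The plan is to read off Theorem~\ref{10} as the $M=0$ specialization of Theorems~\ref{8} and~\ref{yujie}, and then to use compactness (indeed finite rank) of the $Y$-valued operator to upgrade the one-sided implication ``$\Longleftarrow$'' of Theorem~\ref{yujie} into a genuine ``$\Longleftrightarrow$''. First I would verify, for every $\lambda$ with $Re\lambda>-\mu_\infty$, that the standing hypotheses of Theorems~\ref{8} and~\ref{yujie} hold in the present setting (with $K\in\{K_1,K_2\}$ and $M=0$). Regularity of $(\mathfrak{A}_m,\mathfrak{G},K)$ and $(\mathfrak{A}_m,\mathfrak{G},L)$ is Theorem~\ref{regu} together with Lemma~\ref{reg}; since $(A_m,P)$ is an abstract linear control system (Theorem~\ref{regu}) and $M=0$ is bounded, \textbf{(S5)} shows $(A_m,P,0)$ is a regular linear system with admissible feedback operator $I$ (its transfer function vanishes identically). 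By \cite[Proposition~2.1]{Greiner1984} one has $\sigma(A)=\{Re\lambda\le-\mu_\infty\}$, so $Re\lambda>-\mu_\infty$ forces $\lambda\in\rho(A)$; and because $\alpha_\infty>0$ we also get $Re\lambda>-\mu_\infty>-\mu_\infty-\alpha_\infty$, hence $\lambda\in\rho(A+Le_\lambda)$ by the discussion preceding the statement; finally $1\in\rho(MR(\lambda,A_{-1})B)=\rho(0)$ is trivial.

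Next I would substitute $M=0$ into Theorems~\ref{8} and~\ref{yujie}. This collapses $MR(\lambda,A_{-1})B$ and $MR(\lambda,A)Le_\lambda$ to $0$ and $(I-MR(\lambda,A_{-1})B)^{-1}$ to $I$, so the two operators appearing in those theorems reduce to
$$\mathcal{O}_1:=R(\lambda,A)Le_\lambda+R(\lambda,A_{-1})BKe_\lambda,\qquad \mathcal{O}_2:=Ke_\lambda(I-R(\lambda,A)Le_\lambda)^{-1}R(\lambda,A_{-1})B.$$
Theorem~\ref{8} then gives $\lambda\in\sigma_P(\mathcal{A}_{L,K,0})\Longleftrightarrow 1\in\sigma_P(\mathcal{O}_1)\Longleftrightarrow 1\in\sigma_P(\mathcal{O}_2)$, while Theorem~\ref{yujie} gives $\lambda\in\rho(\mathcal{A}_{L,K,0})\Longleftarrow 1\in\rho(\mathcal{O}_1)\Longleftrightarrow 1\in\rho(\mathcal{O}_2)$, i.e., after complementation, $\lambda\in\sigma(\mathcal{A}_{L,K,0})\Longrightarrow 1\in\sigma(\mathcal{O}_1)\Longleftrightarrow 1\in\sigma(\mathcal{O}_2)$. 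Thus the only missing link is $1\in\sigma(\mathcal{O}_1)\Rightarrow\lambda\in\sigma(\mathcal{A}_{L,K,0})$.

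To close that gap I would use that $\mathcal{O}_2$ factors through $Y=R$ via $Ke_\lambda$, hence has one-dimensional range and is compact, so $1\in\sigma(\mathcal{O}_2)\Longleftrightarrow 1\in\sigma_P(\mathcal{O}_2)$. Chaining $1\in\sigma(\mathcal{O}_1)\Leftrightarrow 1\in\sigma(\mathcal{O}_2)\Leftrightarrow 1\in\sigma_P(\mathcal{O}_2)\Leftrightarrow\lambda\in\sigma_P(\mathcal{A}_{L,K,0})\Rightarrow\lambda\in\sigma(\mathcal{A}_{L,K,0})$ completes the circle and yields the three-way equivalence (and incidentally shows that for $\mathcal{O}_1$ too the value $1$ lies in $\sigma$ precisely when it lies in $\sigma_P$). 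There is no real obstacle beyond Theorems~\ref{8} and~\ref{yujie}; the points that need a moment's care are the hypothesis check — in particular observing that $Re\lambda>-\mu_\infty$ automatically entails $Re\lambda>-\mu_\infty-\alpha_\infty$, so $\lambda\in\rho(A+Le_\lambda)$ — and the recognition that it is exactly the compactness of the $Y$-valued operator $\mathcal{O}_2$ that promotes the implication of Theorem~\ref{yujie} to an equivalence.
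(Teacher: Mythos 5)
Your proposal is correct and follows essentially the same route as the paper: specialize Theorems \ref{8} and \ref{yujie} to $M=0$ (after checking $\lambda\in\rho(A)\cap\rho(A+Le_\lambda)$ via $Re\lambda>-\mu_\infty>-\mu_\infty-\alpha_\infty$), and use that $Ke_\lambda(I-R(\lambda,A)Le_\lambda)^{-1}R(\lambda,A_{-1})B$ has one-dimensional range, hence is compact, so $1\in\sigma$ iff $1\in\sigma_P$ for it, which upgrades the one-sided implication of Theorem \ref{yujie} to the stated equivalence. This is precisely the argument sketched in the paper immediately before the statement of Theorem \ref{10}.
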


\begin{corollary}
Let $Re\lambda>-\mu_{\infty}$. Then
\begin{eqnarray*}
% \nonumber to remove numbering (before each equation)
  \lambda\in \sigma(\mathcal {A}_{L,K,0})\Longleftrightarrow
  \lambda\in \sigma_P(\mathcal {A}_{L,K,0}).
\end{eqnarray*}
\end{corollary}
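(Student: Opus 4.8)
The plan is to obtain the corollary by combining the spectrum characterisation of Theorem~\ref{10} with the point-spectrum characterisation of Theorem~\ref{8}, together with the elementary fact that a finite-rank operator has the same spectrum and point spectrum away from $0$. Since $\sigma_P(\mathcal{A}_{L,K,0})\subseteq\sigma(\mathcal{A}_{L,K,0})$ is automatic, only the implication ``$Re\lambda>-\mu_\infty$ and $\lambda\in\sigma(\mathcal{A}_{L,K,0})$ imply $\lambda\in\sigma_P(\mathcal{A}_{L,K,0})$'' needs an argument.

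So fix $\lambda$ with $Re\lambda>-\mu_\infty$. Theorem~\ref{10} applies directly, its only hypothesis being $Re\lambda>-\mu_\infty$, and gives $\lambda\in\sigma(\mathcal{A}_{L,K,0})$ if and only if $1\in\sigma\big(Ke_\lambda(I-R(\lambda,A)Le_\lambda)^{-1}R(\lambda,A_{-1})B\big)$. For Theorem~\ref{8} I would verify its hypotheses for the population data with $M=0$: the triples $(\mathfrak{A}_m,\mathfrak{G},K)$ and $(\mathfrak{A}_m,\mathfrak{G},L)$ are regular linear systems by Theorem~\ref{regu} and Lemma~\ref{reg} together with Lemma~\ref{output} (here $K=K_1$ or $K=K_2$); $(A_m,P,0)$ is a regular linear system with admissible feedback operator $I$ by \textbf{(S5)} and the fact, from Theorem~\ref{regu}, that $(A,B)$ is an abstract linear control system; recalling $\sigma(A)=\{z\in\mathbb{C}:Re\,z\le-\mu_\infty\}$ we get $\lambda\in\rho(A)$, while $\alpha_\infty>0$ forces $Re\lambda>-\mu_\infty>-\mu_\infty-\alpha_\infty$ and hence $\lambda\in\rho(A+Le_\lambda)$, and $M=0$ makes $1\in\rho(MR(\lambda,A_{-1})B)$ trivial. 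Thus Theorem~\ref{8} with $M=0$ applies and gives $\lambda\in\sigma_P(\mathcal{A}_{L,K,0})$ if and only if $1\in\sigma_P\big(Ke_\lambda(I-R(\lambda,A)Le_\lambda)^{-1}R(\lambda,A_{-1})B\big)$ (note $1\in\rho(R(\lambda,A)Le_\lambda)$, so this operator is well defined).

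Write $\mathcal{K}(\lambda):=Ke_\lambda(I-R(\lambda,A)Le_\lambda)^{-1}R(\lambda,A_{-1})B$, the \emph{same} operator in both characterisations. It has one-dimensional range because $Ke_\lambda$ is $Y$-valued with $Y=\mathbb{R}$, hence it is finite rank, in particular compact; therefore the nonzero number $1$ lies in $\sigma(\mathcal{K}(\lambda))$ exactly when it is an eigenvalue of $\mathcal{K}(\lambda)$, i.e. $1\in\sigma(\mathcal{K}(\lambda))$ if and only if $1\in\sigma_P(\mathcal{K}(\lambda))$. Chaining the three equivalences yields $\lambda\in\sigma(\mathcal{A}_{L,K,0})$ if and only if $\lambda\in\sigma_P(\mathcal{A}_{L,K,0})$, which completes the proof.

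I do not expect a genuine obstacle; the statement is a repackaging of Theorems~\ref{8} and~\ref{10}. The only points worth attention are the bookkeeping that $\lambda\in\rho(A)\cap\rho(A+Le_\lambda)$ for every $\lambda$ in the half-plane $Re\lambda>-\mu_\infty$ --- which hinges on the strict positivity $\alpha_\infty>0$ pushing the spectral bound of $A+Le_\lambda$ strictly below $-\mu_\infty$ --- and the recognition that the spectral characterisation (Theorem~\ref{10}) and the point-spectral characterisation (Theorem~\ref{8}) involve literally the same rank-one operator, so that the passage from $\sigma$ to $\sigma_P$ costs nothing.
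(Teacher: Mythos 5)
Your proposal is correct and takes essentially the same route as the paper: the paper's own proof is a one-line citation of Theorems \ref{8}, \ref{yujie} and \ref{10}, resting on exactly the observation you make, namely that the rank-one operator $Ke_\lambda(I-R(\lambda,A)Le_\lambda)^{-1}R(\lambda,A_{-1})B$ is compact, so $1$ lies in its spectrum precisely when it is an eigenvalue, which lets the two characterisations be chained. Your write-up simply makes explicit the hypothesis-checking (regularity of the triples, $\lambda\in\rho(A)\cap\rho(A+Le_\lambda)$, triviality of the condition on $MR(\lambda,A_{-1})B$ when $M=0$) that the paper leaves implicit.
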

\begin{proof}\ \
The result is obtained directly from the combination of Theorem
\ref{8}, Theorem \ref{yujie} and Theorem \ref{10}.
\end{proof}

\begin{theorem}\label{eq}
Let $Re\lambda>-\mu_{\infty}$. Then,
$$\lambda\in \sigma(\mathcal {A}_{L,K_1,0})\Longleftrightarrow \xi_1(\lambda)=0$$
and
$$\lambda\in \sigma(\mathcal {A}_{L,K_2,0})\Longleftrightarrow \xi_2(\lambda)=0,$$
where
$$\xi_1(\lambda)=-1+\int_0^{+\infty}\int_{-r}^0\beta(\sigma,a)e^{\lambda \sigma}e^{-\int_0^a(\lambda+\mu(s)+e^{-\lambda r}\alpha(s))ds}d\sigma da$$
and $$\xi_2(\lambda)=-1+\int_0^\infty
\beta(a)e^{-\int_0^a(\lambda+\mu(s)+e^{-\lambda r}\alpha(s))ds}e^{-\lambda r}da$$
\end{theorem}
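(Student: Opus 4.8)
The plan is to apply Theorem \ref{10} and reduce the spectral condition $1\in\sigma\big(K_ie_\lambda(I-R(\lambda,A)Le_\lambda)^{-1}R(\lambda,A_{-1})B\big)$ to the scalar equation $\xi_i(\lambda)=0$. The key point is that for our concrete choice of operators the composite map appearing in Theorem \ref{10} is a rank-one operator on the scalar output space $Y=\mathbb{C}$, so its spectrum reduces to a single number. First I would compute the relevant building blocks explicitly. From the excerpt we already know $R(\lambda,A_{-1})B=e^{-\int_0^\cdot(\lambda+\mu(s))ds}$, and $(e_\lambda f)(\theta)=e^{\lambda\theta}f$, so $(Le_\lambda f)(a)=(LF)(a)$ with $F=e_\lambda f$, i.e. $(Le_\lambda f)(a)=-\alpha(a)e^{-\lambda r}f(a)$; thus $Le_\lambda$ is simply multiplication by $-e^{-\lambda r}\alpha(\cdot)$ on $L^1(\mathbb R^+)$. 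Hence $R(\lambda,A)Le_\lambda$ is, modulo the resolvent of the first-order transport operator $A$, a bounded operator and $I-R(\lambda,A)Le_\lambda$ is invertible for $\mathrm{Re}\,\lambda>-\mu_\infty$ (this is where the hypothesis $\mathrm{Re}\,\lambda>-\mu_\infty\Rightarrow\lambda\in\rho(A)$ and the remark preceding the theorem about $\rho(A+Le_\lambda)$ are used).

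Next I would identify the function $g_\lambda:=(I-R(\lambda,A)Le_\lambda)^{-1}R(\lambda,A_{-1})B$. The cleanest route is to recognize that $g_\lambda$ solves the boundary-value ODE obtained from $(\lambda-A_m-Le_\lambda)g=0$, $Pg=1$; that is, $g_\lambda'(a)=-(\lambda+\mu(a)+e^{-\lambda r}\alpha(a))g_\lambda(a)$ with $g_\lambda(0)=1$, which gives
\begin{align*}
g_\lambda(a)=e^{-\int_0^a(\lambda+\mu(s)+e^{-\lambda r}\alpha(s))ds}.
\end{align*}
I would verify this by noting $R(\lambda,A_{-1})B$ is the solution of $(\lambda-A_m)h=0$, $Ph=1$, and that applying $(I-R(\lambda,A)Le_\lambda)^{-1}$ amounts to absorbing the perturbation $Le_\lambda$ into the generator while keeping the boundary value fixed (this is exactly the content of equation (\ref{ZB})–(\ref{GB}) applied to $A+Le_\lambda$). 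Then I apply $K_i e_\lambda$ to $g_\lambda$. Since $(K_1e_\lambda g)(\cdot)=\int_0^\infty\int_{-r}^0\beta_1(\sigma,a)e^{\lambda\sigma}g(a)\,d\sigma\,da$ and $(K_2 e_\lambda g)=\int_0^\infty\beta_2(a)e^{-\lambda r}g(a)\,da$, plugging in $g=g_\lambda$ gives precisely $K_ie_\lambda g_\lambda=\xi_i(\lambda)+1$. Because the operator $K_ie_\lambda(I-R(\lambda,A)Le_\lambda)^{-1}R(\lambda,A_{-1})B$ maps $\mathbb{C}$ to $\mathbb{C}$ by multiplication by this scalar, $1$ lies in its spectrum iff $\xi_i(\lambda)+1=1$, i.e. $\xi_i(\lambda)=0$.

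The main obstacle I anticipate is the rigorous justification of the explicit formula for $g_\lambda$, in particular that $(I-R(\lambda,A)Le_\lambda)^{-1}R(\lambda,A_{-1})B$ really equals $R(\lambda,(A+Le_\lambda)_{-1})B$ — one must be slightly careful since $Le_\lambda$, though bounded as a multiplication operator here, is being composed with the extrapolation resolvent, and one should check that the perturbed boundary operator $B$ (i.e. $(\lambda-(A+Le_\lambda)_{-1})D_{\lambda,A_m+Le_\lambda,P}$) is unchanged, which follows because $P$ only depends on the trace at $a=0$ and $Le_\lambda$ does not affect the boundary condition. A secondary routine check is measurability/integrability: $\beta_i\in L^\infty$, $g_\lambda$ decays (because $\mathrm{Re}(\lambda+\mu(s)+e^{-\lambda r}\alpha(s))$ is eventually positive using $\mu_\infty,\alpha_\infty>0$ and $\mathrm{Re}\,\lambda>-\mu_\infty$), so all integrals defining $\xi_i$ converge and the manipulations are legitimate. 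With these points settled, Theorem \ref{10} closes the argument.
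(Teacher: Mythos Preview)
Your proposal is correct and follows essentially the same route as the paper: apply Theorem \ref{10}, observe that $K_ie_\lambda(I-R(\lambda,A)Le_\lambda)^{-1}R(\lambda,A_{-1})B$ is a scalar, compute $g_\lambda=(I-R(\lambda,A)Le_\lambda)^{-1}R(\lambda,A_{-1})B$ explicitly, and evaluate $K_ie_\lambda g_\lambda$. The only cosmetic difference is in deriving $g_\lambda$: the paper writes out the Volterra integral equation $(I-R(\lambda,A)Le_\lambda)f=e^{-\int_0^\cdot(\lambda+\mu(s))ds}$, substitutes $m(a)=e^{\int_0^a(\lambda+\mu(s))ds}f(a)$, and solves the resulting ODE $m'+e^{-\lambda r}\alpha m=0$, $m(0)=1$, whereas you shortcut this by recognizing $g_\lambda$ as the Dirichlet map $D_{\lambda,A_m+Le_\lambda,P}1$ and writing down the perturbed first-order ODE directly; both yield $g_\lambda(a)=e^{-\int_0^a(\lambda+\mu(s)+e^{-\lambda r}\alpha(s))ds}$.
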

\begin{proof}\ \
By, it follows that $\lambda\in \sigma_P(\mathcal {A}_{L,K_i,0})$ if and only if
\begin{align*}
 1=K_ie_\lambda(I-R(\lambda,A)
Le_\lambda)^{-1}e^{-\int_0^\cdot(\lambda+\mu(s))ds},
\ i=1,2.
\end{align*}
In order to compute $(I-R(\lambda,A)
Le_\lambda)^{-1}e^{-\int_0^\cdot(\lambda+\mu(s))ds}$,
we solve the equation
\begin{align}\label{one}
   (I-R(\lambda,A)
Le_\lambda)f=e^{-\int_0^\cdot(\lambda+\mu(s))ds}.
\end{align}
Observe that $Le_\lambda f=-\alpha(\cdot)e^{-\lambda r}f(\cdot)$.
Find the solution of equations
\begin{align*}
   (\lambda-A)g=-\alpha(\cdot)e^{-\lambda r}f(\cdot)
\end{align*}
to get that $g=R(\lambda,A)
Le_\lambda f=-e^{-\int_0^\cdot(\lambda+\mu(s))ds}\int_0^\cdot
\alpha(s)e^{-\lambda r}e^{\lambda s+\int_0^s\mu(\sigma)d\sigma}f(s)ds.$
Then equation (\ref{one}) becomes
\begin{align*}
    f(a)+e^{-\int_0^a(\lambda+\mu(s))ds}\int_0^a
\alpha(s)e^{-\lambda r}e^{\lambda s+\int_0^s\mu(\sigma)d\sigma}f(s)ds
=e^{-\int_0^a(\lambda+\mu(s))ds}, \ a\geq 0,
\end{align*}
that is
\begin{align*}
   e^{\int_0^a(\lambda+\mu(s))ds}f(a)+\int_0^a
\alpha(s)e^{-\lambda r}e^{\lambda s+\int_0^s\mu(\sigma)d\sigma}f(s)ds
=1, \ a\geq 0.
\end{align*}
Let $m(a)=e^{\int_0^a(\lambda+\mu(s))ds}f(a),\ a\geq 0$.
Then the above equation convert to
\begin{align*}
   m(a)+\int_0^a
\alpha(s)e^{-\lambda r}m(s)ds
=1, \ a\geq 0,
\end{align*}
which is equivalent to the differential equation
\begin{align*}
    \left\{
      \begin{array}{ll}
        m'(a)+\alpha(a)e^{-\lambda r}m(a)=0, & \hbox{ } \\
        m(0)=1. & \hbox{ }
      \end{array}
    \right.
\end{align*}
This implies that $m(a)=e^{-e^{-\lambda r}\int_0^a\alpha(s)ds}, \ a\geq 0.$
$f(a)=e^{-\int_0^a(\lambda+\mu(s)+e^{-\lambda r}\alpha(s))ds},\ a\geq 0.$
Then
$\lambda\in \sigma_P(\mathcal {A}_{L,K_1,0})$ if and only if
\begin{align*}
 1=&K_1e_\lambda(I-R(\lambda,A)
Le_\lambda)^{-1}e^{-\int_0^\cdot(\lambda+\mu(s))ds}\\
=&K_1e_\lambda f\\
=&\int_0^{+\infty}\int_{-r}^0\beta_1(\sigma,a)e^{\lambda \sigma}e^{-\int_0^a(\lambda+\mu(s)+e^{-\lambda r}\alpha(s))ds}d\sigma da,
\end{align*}
and $\lambda\in \sigma_P(\mathcal {A}_{L,K_2,0})$ if and only if
\begin{align*}
 1=&K_2e_\lambda(I-R(\lambda,A)
Le_\lambda)^{-1}e^{-\int_0^\cdot(\lambda+\mu(s))ds}\\
=&K_2e_\lambda f\\
=&\int_0^{+\infty}\beta_2(a)e^{-\lambda r}e^{-\int_0^a(\lambda+\mu(s)+e^{-\lambda r}\alpha(s))ds}da.
\end{align*}
The proof is therefore completed.
\end{proof}

\begin{theorem}\label{ay}
The semigroups generated by $(\mathcal {A}_{L,K_1,0}, D(\mathcal {A}_{L,K_1,0}))$ and $(\mathcal {A}_{L,K_2,0}, D(\mathcal {A}_{L,K_2,0}))$ are positive and for $i=1,2$, the following
statements hold:

i)  $w_0(\mathcal {A}_{L,K_i,0})< 0\Leftrightarrow \xi_i(0)< 0$,

ii) $w_0(\mathcal {A}_{L,K_i,0})= 0\Leftrightarrow \xi_i(0)= 0$,

iii) $w_0(\mathcal {A}_{L,K_i,0})> 0\Leftrightarrow \xi_i(0)> 0$.
\end{theorem}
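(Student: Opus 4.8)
The plan is to reduce all three equivalences to the location of the (essentially unique) real zero of the scalar characteristic functions $\xi_i$ produced by Theorem~\ref{eq}, exploiting two standard features of positive semigroups: the growth bound equals the spectral bound, and the spectral bound lies in the spectrum. So the first task is positivity of the semigroup generated by $\mathcal{A}_{L,K_i,0}$ on $L^p([-r,0],L^1(\mathbb{R}^+))\times L^1(\mathbb{R}^+)$. I would obtain this by resolvent positivity: for real $\lambda$ large, the explicit expression for $R(\lambda,\mathcal{A}_{L,K,M})$ in Theorem~\ref{yujie} collapses considerably because here $M=0$, and its surviving ingredients — $R(\lambda,A)$ (the resolvent of the positive transport/mortality semigroup), $R(\lambda,A_{-1})B=e^{-\int_0^{\cdot}(\lambda+\mu)}$, the translation resolvent $R(\lambda,\mathfrak A)$, $e_\lambda$, and $K_ie_\lambda$ (here the hypotheses $\beta_i\ge0$ enter) — are all positivity preserving, so $R(\lambda,\mathcal{A}_{L,K_i,0})\ge0$ for $\lambda$ large, whence the semigroup is positive; equivalently, one verifies the positive minimum principle directly on $D(\mathcal{A}_{L,K_i,0})$.

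With positivity available, and since the transport/birth structure makes the semigroup eventually norm continuous, the spectral mapping theorem yields $w_0(\mathcal{A}_{L,K_i,0})=s(\mathcal{A}_{L,K_i,0})$, where $s(\cdot):=\sup\{Re\lambda:\lambda\in\sigma(\cdot)\}$, and positivity forces $s(\mathcal{A}_{L,K_i,0})\in\sigma(\mathcal{A}_{L,K_i,0})$. Since $\mu_\infty>0$, the part of the spectrum in $\{Re\lambda\le-\mu_\infty\}$ is irrelevant to the sign of $s$, while by Theorem~\ref{eq} the part in $\{Re\lambda>-\mu_\infty\}$ meets the real axis exactly in the zeros of $\xi_i$. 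Write $s:=s(\mathcal{A}_{L,K_i,0})$. If $s>-\mu_\infty$, then $\xi_i(s)=0$ and $\xi_i(\lambda)\neq0$ for every real $\lambda>s$; as $\xi_i$ is continuous on $(-\mu_\infty,\infty)$ with $\xi_i(\lambda)\to-1$ when $\lambda\to+\infty$, necessarily $\xi_i<0$ on $(s,\infty)$. If $s\le-\mu_\infty$, then $\xi_i$ has no real zero in $(-\mu_\infty,\infty)$, hence $\xi_i<0$ there. Using in addition that $\xi_i$ is strictly decreasing on $(-\mu_\infty,\infty)$ — each factor $e^{\lambda\sigma}$ ($\sigma\le0$), $e^{-\lambda r}$, and $e^{-\int_0^a(\lambda+\mu(s)+e^{-\lambda r}\alpha(s))\,ds}$ being decreasing in $\lambda$ on the relevant range — so that $\xi_i$ has at most one real zero and changes sign there from $+$ to $-$, the three equivalences follow: $s<0\Rightarrow\xi_i(0)<0$ (as $0>s$), and conversely $\xi_i(0)<0\Rightarrow s<0$ (otherwise $s\ge0$ would give $\xi_i(0)\ge\xi_i(s)=0$); $s=0\iff\xi_i(0)=0$; and $s>0\Rightarrow\xi_i(0)>\xi_i(s)=0$, while $\xi_i(0)>0$ together with $\xi_i(+\infty)=-1$ yields, via the intermediate value theorem, a zero $\lambda_0\in(0,\infty)$ of $\xi_i$, hence $\lambda_0\in\sigma(\mathcal{A}_{L,K_i,0})$ and $s\ge\lambda_0>0$. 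This proves (i)--(iii).

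The main obstacle is the positivity of the semigroup: the delay operator $L$ occupies the lower-left corner of $\mathcal{A}_{L,K_i,0}$ and is unbounded, so one cannot simply treat it as a positive perturbation of a positive generator and must argue directly, either through the explicit resolvent formula of Theorem~\ref{yujie} or through the positive minimum principle for the generator. A secondary delicate point is the strict monotonicity of the characteristic functions $\xi_i$ on $(-\mu_\infty,+\infty)$, on which the sharpness of the three-way sign dichotomy in (i)--(iii) rests.
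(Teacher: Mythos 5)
Your overall strategy coincides with the paper's: establish positivity of the semigroup through positivity of the explicit resolvent from Theorem \ref{yujie}, invoke $s(\mathcal{A}_{L,K_i,0})=w_0(\mathcal{A}_{L,K_i,0})$ for positive semigroups, and then read off (i)--(iii) from the characteristic equation of Theorem \ref{eq} together with monotonicity of $\xi_i$. However, there is a genuine gap at exactly the point you yourself flag as ``the main obstacle'': you never actually establish positivity of the resolvent. The list of ``surviving ingredients'' you check ($R(\lambda,A)$, $R(\lambda,A_{-1})B$, $R(\lambda,\mathfrak{A})$, $e_\lambda$, $K_ie_\lambda$) omits precisely the operators on which the matter turns, namely $(I-R(\lambda,A)Le_\lambda)^{-1}$ and the inverses $N_1$, $N_2$ entering $W_1,\dots,W_6$. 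These cannot be dismissed as obviously positivity preserving: $Le_\lambda$ itself is order-\emph{reversing} (since $(LF)(a)=-\alpha(a)F(-r,a)$ with $\alpha\ge 0$), so $I-R(\lambda,A)Le_\lambda=I+V$ with $V\ge 0$, and the positivity of $(I+V)^{-1}$ is not a Neumann-series triviality. The paper's proof is devoted almost entirely to this: it represents $(I-R(\lambda,A)Le_\lambda)^{-1}$ by the explicit Volterra integral equation computed in the proof of Theorem \ref{eq}, deduces its positivity from that representation, then propagates positivity to $N_2$ (a scalar here, since $Y=\mathbb{R}$) and to $N_1=[I-R(\lambda,A)Le_\lambda]^{-1}\{I+R(\lambda,A_{-1})BN_2Ke_\lambda[I-R(\lambda,A)Le_\lambda]^{-1}\}$, and only then concludes $R(\lambda,\mathcal{A}_{L,K_i,0})\ge 0$ and applies the resolvent-positivity characterization. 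Saying ``argue directly, either through the explicit resolvent formula or through the positive minimum principle'' names the method without executing it, and the execution is the content of the proof.

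A second, smaller defect: you justify $w_0=s$ by asserting that ``the transport/birth structure makes the semigroup eventually norm continuous.'' That claim is unsubstantiated and almost certainly false here (age-structured transport semigroups on $L^1(R^+)$ are not eventually norm continuous in general), and it is not the paper's route. The correct and standard argument, which the paper uses, is that the state space is an $AL$-space, so spectral bound equals growth bound for any positive $C_0$-semigroup on it; no spectral mapping theorem is needed. Your sign analysis of $\xi_i$ in (i)--(iii) is fine and matches the paper (which defers to Piazzera), including the correct limit $\xi_i(\lambda)\to -1$ as $\lambda\to+\infty$; note only that your factor-by-factor justification of strict monotonicity of the term $e^{-\int_0^a(\lambda+\mu(s)+e^{-\lambda r}\alpha(s))ds}$ is not airtight, since $\frac{d}{d\lambda}\big(\lambda a+e^{-\lambda r}\int_0^a\alpha\big)=a-re^{-\lambda r}\int_0^a\alpha$ need not be nonnegative for all $\lambda$ in the range considered; the paper simply asserts monotonicity, so this weakness is shared rather than introduced by you.
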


\begin{proof}\ \
The combination of $A$ being a generator of semigroup and $(\mathfrak{A},\mathfrak{B},L,\overline{L})$ generating a regular
linear systems implies that
$$\|R(\lambda,A)Le_\lambda\|\leq\|R(\lambda,A)\|\|Le_\lambda\|\rightarrow 0\ (\lambda\rightarrow +\infty).$$
Then $I-R(\lambda,A)Le_\lambda$ is invertible, $$[I-R(\lambda,A)Le_\lambda]^{-1}=\Sigma_{k=0}^\infty [R(\lambda,A)Le_\lambda]^k$$
and
$$\|(I-R(\lambda,A)Le_\lambda)^{-1}\|\leq\frac{1}{1-\|R(\lambda,A)\|\|Le_\lambda\|}$$
for enough big $\lambda$.
Since $(A,B)$ generates an abstract linear control system and $(\mathfrak{A},\mathfrak{B},K_i,\bar{K_i})$ generate regular
linear systems, we have that
$$\|R(\lambda,A)Le_\lambda+R(\lambda,A_{-1})BK_ie_\lambda\|\leq
\|R(\lambda,A)\|\|Le_\lambda\|+\|R(\lambda,A_{-1})B\|\|K_ie_\lambda\|\rightarrow 0$$
and
$$\|K_ie_\lambda[I-R(\lambda,A)Le_\lambda]^{-1}R(\lambda,A_{-1})B\|\leq
\|K_ie_\lambda\|\frac{1}{1-\|R(\lambda,A)\|\|Le_\lambda\|}R(\lambda,A_{-1})B\|\rightarrow 0$$
as $\lambda\rightarrow \infty$. Therefore, the operator
$N_1$ and $N_2$ are
invertible and there inverse are given by Neumann series.
By the proof of the above theorem, the equation
$$[I-R(\lambda,A)Le_\lambda]^{-1}f=g$$ is described by
\begin{align*}
   f(a)+e^{-\int_0^a(\lambda+\mu(s))ds}\int_0^a
\alpha(s)e^{-\lambda r}e^{\lambda s+\int_0^s\mu(\sigma)d\sigma}f(s)ds
=g(a), \ a\geq 0,
\end{align*}
which implies that operator $[I-R(\lambda,A)Le_\lambda]^{-1}$
is positive.
By \cite{Mei2015}, $R(\lambda,A_{-1})B$ is positive.
The positivities of $K_1e_\lambda$ and $K_2e_\lambda$
can be obtained by \cite{Piazzera2004} and \cite{Mei2015}, respectively.
Hence $N_2$ is positive.
Moreover, we can easy to obtain that
$N_1=[I-R(\lambda,A)Le_\lambda]^{-1}\{I+R(\lambda,A_{-1})BN_2Ke_\lambda[I-R(\lambda,A)Le_\lambda]^{-1}\}$
is also positive.
Then $W_1,\ W_2,\ W_5$ and $W_6$ are positive.
The positivities of $R(\lambda,\mathfrak{A})$ and $e_\lambda$
are obviously.
Therefore, operator
\begin{align*}
    &R(\lambda,\mathcal {A}_{L,K_i,0})\\
=&
\left(
  \begin{array}{cc}
    R(\lambda,\mathfrak{A})+e_\lambda W_1 & e_\lambda W_2 \\
     W_5& W_6 \\
  \end{array}
\right)
\end{align*}
is positive, which implies by \cite[Therem VI.1.15]{Engel2000} that
operator $\mathcal {A}_{L,K_i,0}$ generates a positive $C_0$-semigroup. Since the state space is an $AL$-space,
it follows from \cite[Theorem VI.1.15]{Engel2000} that $w_0(\mathcal
{A}_{L,K_i,0})=s(\mathcal {A}_{L,K_i,0})$ (spectrum boundness is qual to
growth boundness). Note that the function $\xi_i $ is continuous and
strictly decreasing with $\lim_{\lambda\rightarrow
-\infty}\xi_i(\lambda)=+\infty$ and $\lim_{\lambda\rightarrow
+\infty}\xi_i(\lambda)=1$. The rest of the proof is the same as the
proof of \cite[Theorem 13]{Piazzera2004}.
\end{proof}

The following result is directly obtained from Theorem \ref{eq} and
Theorem \ref{ay}.
\begin{corollary}
If $$\|\beta_i\|_\infty\int_0^\infty
e^{-\int_0^a\mu(s)ds}da<1,$$
then the growth bound of the semigroup generated by $\mathcal {A}_{L,K_i,0}$ satisfies $w_0(\mathcal {A}_{L,K_i,0})<0$. In
particular, all solutions (classical or mild) of (PE) are uniformly
exponentially stable.
\end{corollary}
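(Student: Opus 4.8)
The plan is to combine the spectral characterisation of Theorem \ref{eq} with the growth-bound equivalences of Theorem \ref{ay}. By part (i) of Theorem \ref{ay}, the inequality $w_0(\mathcal{A}_{L,K_i,0})<0$ holds if and only if $\xi_i(0)<0$, so the whole problem reduces to estimating $\xi_i$ at the single point $\lambda=0$; nothing about the semigroup itself needs to be touched beyond citing that theorem, which already records positivity and the sign dictionary.

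First I would specialise the formulas of Theorem \ref{eq} to $\lambda=0$, obtaining $\xi_1(0)=-1+\int_0^\infty\int_{-r}^0\beta_1(\sigma,a)\,e^{-\int_0^a(\mu(s)+\alpha(s))\,ds}\,d\sigma\,da$ and $\xi_2(0)=-1+\int_0^\infty\beta_2(a)\,e^{-\int_0^a(\mu(s)+\alpha(s))\,ds}\,da$. Since $\alpha$ is a genuine (non-negative) death rate, $\int_0^a\alpha(s)\,ds\ge 0$, hence the delay-modified survival kernel is pointwise dominated: $e^{-\int_0^a(\mu(s)+\alpha(s))\,ds}\le e^{-\int_0^a\mu(s)\,ds}$. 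Bounding $\beta_i$ pointwise by $\|\beta_i\|_\infty$ and applying Tonelli's theorem (in the $i=1$ case after first carrying out the integration over $\sigma\in[-r,0]$) then gives $\xi_i(0)\le -1+\|\beta_i\|_\infty\int_0^\infty e^{-\int_0^a\mu(s)\,ds}\,da<0$ under the stated hypothesis, so Theorem \ref{ay}(i) yields $w_0(\mathcal{A}_{L,K_i,0})<0$.

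It then remains to read off uniform exponential stability. From $w_0(\mathcal{A}_{L,K_i,0})<0$ there exist $M\ge 1$ and $\omega<0$ with $\|T_{\mathcal{A}_{L,K_i,0}}(t)\|\le Me^{\omega t}$ for every $t\ge 0$. By the reduction of the population equation to the abstract boundary system carried out before Theorem \ref{regu}, every classical solution of (PE) is the $X$-component of an orbit $t\mapsto T_{\mathcal{A}_{L,K_i,0}}(t)\phi$ with $\phi$ in the domain of the generator, while every mild solution corresponds to a general $\phi$ in the state space; in either case the solution is dominated in norm by $Me^{\omega t}\|\phi\|\to 0$, which is precisely the asserted uniform exponential decay. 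The only point requiring care is the sign bookkeeping in the exponent — one must genuinely invoke non-negativity of $\mu$ and $\alpha$ so that the modified kernel is dominated by the unperturbed kernel $e^{-\int_0^a\mu(s)\,ds}$ occurring in the hypothesis — but beyond this there is no analytic obstacle, the corollary being essentially a one-line consequence of the two preceding theorems.
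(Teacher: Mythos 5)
Your proposal is correct and follows exactly the route the paper intends: the paper's own ``proof'' is a one-line appeal to Theorems \ref{eq} and \ref{ay}, and you have merely filled in the evaluation of $\xi_i(0)$, the kernel domination $e^{-\int_0^a(\mu(s)+\alpha(s))\,ds}\le e^{-\int_0^a\mu(s)\,ds}$ (valid since $\alpha\ge 0$), and the standard passage from $w_0<0$ to uniform exponential decay of classical and mild solutions. One bookkeeping caveat: for $i=1$ the integration over $\sigma\in[-r,0]$ contributes an extra factor $r$, so your chain of estimates actually yields $\xi_1(0)\le -1+r\,\|\beta_1\|_\infty\int_0^\infty e^{-\int_0^a\mu(s)\,ds}\,da$ rather than the bound you display — but this discrepancy is inherited from the paper's own statement of the hypothesis (it disappears if $\|\beta_1\|_\infty$ is read as $\operatorname{ess\,sup}_a\int_{-r}^0|\beta_1(\sigma,a)|\,d\sigma$), and is not a defect introduced by your argument.
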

\begin{remark}
We observe that the conditions of the above corollary are the same as Corollary 14 in \cite{Piazzera2004} for $i=1$ and Corollary 4.8 in \cite{Mei2015} for $i=2$. This means that if the conditions hold, the population systems are uniformly
exponentially stable both with and without death caused by pregnancies.
\end{remark}

From the above analysis, it is not hard to see that all the conditions of Theorem \ref{fe} are satisfied. Therefore the following theorem holds

\begin{theorem}
Population systems (\ref{population2}) and (\ref{population3}) are abstract linear control systems.
\end{theorem}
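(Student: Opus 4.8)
The plan is to obtain the statement as a direct application of Theorem~\ref{fe}. After the reduction carried out in this section, systems (\ref{population2}) and (\ref{population3}) are exactly of the form $(BFS)$: with the operators listed above one has $M=0$, $H=0$, the boundary condition $w(t,0)=\int_0^\infty\int_{-r}^0\beta_1(\sigma,a)w(t+\sigma,a)\,d\sigma\,da$ (resp. $w(t,0)=\int_0^\infty\beta_2(a)w(t-r,a)\,da$) is $Pw(t)=Mw(t)+Kw_t+Hq_t$ with $K=K_1$ (resp. $K=K_2$), and the delayed harvesting term $-\eta(a)q(t-r,a)$ is precisely the contribution $Eq_t$. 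So it suffices to verify the five regularity hypotheses of Theorem~\ref{fe}.

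Of these, $(\mathfrak{A}_m,\mathfrak{G},K_i)$ generating a regular linear system is Theorem~\ref{regu}, and $(\mathfrak{A}_m,\mathfrak{G},L)$ generating one is Lemma~\ref{reg}. For $(\mathrm{A}_m,\mathrm{G},E)$ I would observe that $E$ is built from the same two ingredients as $L$, namely evaluation of a history at $-r$ followed by multiplication by the bounded function $\eta\in L^\infty(R^+)$; hence the proof of Lemma~\ref{reg} goes through verbatim with $\eta$ in place of $\alpha$, and $(\mathrm{A}_m,\mathrm{G},E)$ generates a regular linear system. Since $H=0$, the triple $(\mathrm{A}_m,\mathrm{G},H)$ has zero observation operator and is trivially a regular linear system. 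Finally, since $M=0$ is a bounded operator and $(A,B)$ is an abstract linear control system by Theorem~\ref{regu}, statement \textbf{(S5)} shows that $(A_m,P,M)=(A_m,P,0)$ is a regular linear system with admissible feedback operator $I$.

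With all hypotheses in place, Theorem~\ref{fe} yields that $(BFS)$, and hence each of systems (\ref{population2}) and (\ref{population3}), is an abstract linear control system. I do not expect any genuine obstacle: the proof is a verification, and the only point needing a moment's thought is that $E$ inherits its regularity from exactly the computation used for $L$ (so that no new delayed-trace estimate is required), while the vanishing of $M$ and $H$ makes two of the remaining hypotheses immediate; everything else is a direct invocation of Theorem~\ref{regu}, Lemma~\ref{reg}, \textbf{(S5)} and Theorem~\ref{fe}.
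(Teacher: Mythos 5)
Your proposal is correct and follows essentially the same route as the paper: the paper also obtains the result by observing that systems (\ref{population2}) and (\ref{population3}) are of the form $(BFS)$ and that the hypotheses of Theorem~\ref{fe} are met via Theorem~\ref{regu}, Lemma~\ref{reg} (and its obvious analogue for $E$ with $\eta$ in place of $\alpha$), and the boundedness of $M=0$ and $H=0$. Your write-up simply makes explicit the verification that the paper compresses into "from the above analysis, it is not hard to see that all the conditions of Theorem~\ref{fe} are satisfied."
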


%% References with bibTeX database:

\end{document}